\documentclass[a4paper,11pt,reqno]{amsart}
\usepackage{amsmath, amssymb, amsfonts, amsthm}
\usepackage{latexsym,textcomp}
\usepackage{dsfont}
\usepackage[english]{babel}
\usepackage{hyperref}
\usepackage{enumerate}
\usepackage[utf8]{inputenc}
\usepackage{faktor}

\usepackage{color}
\usepackage[citestyle=alphabetic,
            bibstyle=alphabetic,
            backend=bibtex,
            url=true,
            doi=true,
            isbn=false,
            giveninits=true]{biblatex}

\bibliography{literatur.bib}

\theoremstyle{definition}

\newtheorem{thm}{Theorem}[section]
\newtheorem{exa}{Example}[thm]
\newtheorem{rem}[thm]{Remark}
\newtheorem{defi}[thm]{Definition}
\newtheorem{lemm}[thm]{Lemma}
\newtheorem{propo}[thm]{Proposition}
\newtheorem{corol}[thm]{Corollary}

\numberwithin{equation}{section}
\newcommand{\comment}[1]{}

\newcommand{\cF}{{\mathcal F}}

\newcommand{\cL}{{\mathcal L}}

\newcommand{\cD}{{\mathcal D}}

\newcommand{\Z}{{\mathbb Z}}
\newcommand{\R}{{\mathbb R}}

\newcommand{\N}{{\mathbb N}}

\newcommand{\ran}{{\mathrm {ran}\,}}

\newcommand{\Deg}{{\mathrm {Deg}}}

\newcommand{\al}{{\alpha}}

\newcommand{\ph}{{\varphi}}

\newcommand{\as}[1]{\langle #1\rangle}
\newcommand{\av}[1]{\left\Vert #1\right\Vert}
\newcommand{\aV}[1]{\Vert #1 \Vert}

\newcommand{\aVd}{\left\Vert \cdot\right\Vert}

\newcommand{\Hm}[1]{\leavevmode{\marginpar{\tiny%
$\hbox to 0mm{\hspace*{-0.5mm}$\leftarrow$\hss}%
\vcenter{\vrule depth 0.1mm height 0.1mm width \the\marginparwidth}%
\hbox to 0mm{\hss$\rightarrow$\hspace*{-0.5mm}}$\\\relax\raggedright
#1}}}

\newcommand{\hilbert}{{\mathcal{H}}}

\begin{document}

\title[Graphs at infinity]{Graphs at infinity: Liouville theorems, Recurrence and Characterization of Dirichlet forms}

\author{Matthias Keller}
\address{M. Keller, Institut für Mathematik, Campus Golm, Haus 9, Karl-Liebknecht-Straße 24-25, 14476 Potsdam, Germany}
\email{matthias.keller@uni-potsdam.de}

\author{Daniel Lenz}
\address{D. Lenz, Institut für Mathematik, Friedrich-Schiller-Universität Jena,  Inselplatz 5, 07743 Jena, Germany}
\email{daniel.lenz@uni-jena.de}

\author{Marcel Schmidt}
\address{M. Schmidt,  Mathematisches Institut, Universität Leipzig, Augustusplatz 10, 04109 Leipzig, Germany and Institut für Mathematik, Friedrich-Schiller-Universität Jena,  Inselplatz 5, 07743 Jena, Germany}
\email{marcel.schmidt@math.uni-leipzig.de and schmidt.marcel@uni-jena.de}


\maketitle

\begin{abstract}
 We survey recent results on graphs and their Laplacians related to the behavior of the graph at large. In particular, we focus on Liouville theorems, recurrence and characterizations of Dirichlet forms via boundary terms.
\end{abstract}

%

\setcounter{tocdepth}{1}
\tableofcontents


\renewcommand{\chaptername}{Part}

\section*{Introduction}

Each graph comes with a Laplacian. This Laplacian gives rise to a semigroup and a resolvent. The semigroup  provides  solutions of the heat equation. It can also be described by a Markov process on the graph. The resolvent  provides solution to Poisson equation,  thereby allowing one to study  electrostatics on the graph.  From this very short discussion we can already  infer  how graphs and their Laplacians connect quite a few areas of mathematics.

We also see how  graphs are very similar to Riemannian  manifolds and very similar questions may be investigated for graphs and for manifolds. This is no coincidence. There is a common framework covering Laplacians on manifolds, on  graphs and on many more geometric objects such a fractals and networks. This is the framework of Dirichlet forms. Accordingly, any result formulated in terms of Dirichlet forms   will apply to all these instances.  

In the present article -- which is a survey of results obtained  within the Priority Program ``Geometry at Infinity'' supported by of the German Science Foundation -- we study graphs at infinity via Dirichlet forms.   Of  course, there is an abundance of related  questions   just in the realm of asymptotics of heat kernels alone. Here, we   look in a different direction and consider  
 (non)existence of solutions $u$  to the equation 
$$(\mathcal{L} +1) u =0$$
in various situations,  where $\mathcal{L}$ is the graph Laplacian. Our point of view is that solutions are encoded by their values on some type of boundary at infinity.  To make precise what we mean  by boundary at infinity is part of our task.  A basic idea is that non-existence of certain solutions is related to a completeness property of the graph (giving non existence of the boundary) or to irrelevance of certain boundaries. Conversely, existence of solutions is related to richness of certain boundaries, which encode these solutions.  Specifically, the three parts of the article deal with the following topics: 

The first part deals with non-existence results for solutions in $\ell^p$ for $1\leq p\leq \infty$. We give geometric, completeness type, criteria for non-existence for $1\leq p<\infty$ following \cite{HK14,HKLS22}.  These results are the analogues to famous results on manifolds going back to  Yau \cite{Yau76} and  Karp \cite{Karp82}.  
 We then turn to the case $p =\infty$. This case is of special interest as the non-existence is characteristic for a conservation property of the graph known as stochastic completeness, as studied in many  the works, see the discussion at the end of Section \ref{sec-Grigoryan} for a more detailed account. We  provide a geometric criterion following \cite{HKS20}, which can be seen as analogue to famous results of Grigor'yan \cite{Gri86}.  

The second part then deals with recurrent graphs. Recurrence is characterized by non-existence of positive supersolutions $(\mathcal L + 1)u \geq 0$ to the equation. We discuss a characterization in terms of irrelevance of metric boundaries. Here, irrelevance is measured by means of vanishing  capacity. This is related to results of Yamasaki \cite{Yam75,Yam77} and Soardi \cite{Soa}. We follow \cite{LPS23}.

The third part deals with existence of solutions in $\ell^2$ specifically. This is strongly related to existence of self-adjoint extensions of the Laplacian.   Indeed, the main thrust is to describe certain self-adjoint extensions via boundary-type conditions, where the boundary comes about as a sort of dual object to the $\ell^2$-solutions in question.  {Our discussion here  can be seen as a slightly  specialized   version of our works  \cite{KLSS19, KLSSW23}.  In fact,  both references treat slightly different boundaries, which leads to slightly different statements.  Also, it should be noted, that \cite{KLSSW23} treats general Dirichlet forms, not only the ones arising from graphs on discrete spaces. For these reasons  we  provide rather complete  proofs in the third part (whereas we only sketch arguments in the first two parts). }

Our considerations have counterparts in the case of manifolds and are sometimes even motivated by such counterparts. Also, 
generalizations to arbitrary Dirichlet forms are known in certain cases as well and are under investigation in other cases.  Here, we restrict the present discussion to the graph case for two reasons: Firstly,  the graph Laplacian can be defined with very little technical background. So, the formulation of the problems in question is rather straightforward.  Secondly, for the topics treated below  there are rather neat and complete  answers in the graph case.  However, 
when leaving the graph case,  additional assumptions become necessary depending on the topic in question. 

\setcounter{part}{-1}

\part{Basic concepts and notation}

In this part, we introduce the basic quantities dealt with in this article. These are graphs and the associated forms and operators.
We first  present these quantities in a topological and  measure-free setting and then turn to the situation in which a measure and, hence, an $\ell^2$-space is given. The corresponding spaces will all be spaces of real-valued functions.   The material in this part is well-known, we refer to the book \cite{KLW21} for a thorough treatment.

Throughout this article, $X$ is a countable set equipped with  discrete topology. Then any subset of $X$ is open and  all functions on $X$ are continuous and the compact subsets of $X$ are just the finite sets.  We denote the set of all real-valued  functions on $X$ by $C(X)$. We denote the  set of all functions in $C(X)$, which vanish outside of a compact (i.e.,  finite) set by  $C_c (X)$.  Any subset of $X$ is open and we equip $X$ with the $\sigma$-algebra of all of its subsets. We then consider measures  $m$ with  the property that 
$0< m(x):=m(\{x\}) <\infty$
holds for all $x\in X$.  Whenever $m$ is such a measure, we write $(X,m)$ for the arising measure space {and call $ (X,m) $ a \emph{discrete measure space}}.  We denote by 
$$a \vee b \qquad\mbox{ and }\qquad a\wedge b$$
the  maximum and the minimum, respectively, of two real numbers (or two functions) $a$, $b$. 
{Furthermore, for a set $A$, we denote by $1_A$ its characteristic function.}

\section{Graphs, forms and Laplacians}
{In this section we introduce graphs over $X$, the associated energy form and the formal Laplacian.  We first do this in a topological and measure free setting and then turn to the situation in which a measure and, hence, an $\ell^2$-space is given. The material in this section is well-known, we refer to the book \cite{KLW21} for a thorough treatment.}
\subsection{Graphs}
First, we introduce graphs over countable $X$.

\begin{defi}[Graph over $X$] A  \textit{graph over $X$}
    is  a pair $(b,c)$ consisting of a {symmetric} function $b  \colon X\times X \to [0,\infty)$ {with zero diagonal, i.e.,  $b(x,y)= b(y,x)$ and $b(x,x)= 0$ for all $ x,y\in X $,}   satisfying
    $$ {\mathrm{deg}(x):=}\sum_{y\in X} b(x,y) <\infty $$ for all $x\in X$,
     and a function $c \colon X \to [0 ,\infty)$. The function $b$ is called the \textit{edge weight} and the function $c$ is called the  \textit{killing term}.
     \end{defi}

Whenever $(b,c)$ is a graph over $X$ we refer to   the elements of $X$ 
as \textit{vertices} and to  pairs  $(x,y)$ with $b(x,y)>0$ as \textit{edges}. By symmetry of $b$ a pair $(x,y)$ is an edge if and only if $(y,x)$ is an edge. 
We   call $x$ and $y$ \textit{neighbours} and write $x\sim y$ if $(x,y)$ is an edge. 
A \emph{path}  is a (finite or infinite) sequence
$(x_1,x_2,\ldots)$ of vertices such that $x_i \sim x_{i+1}$, for $i =
1,2,\ldots$. We say that two points $x,y\in X$ are {\em connected}
if there is a finite path $(x=x_1,\ldots,x_n=y)$. This defines an
equivalence relation on the set of vertices and the resulting
equivalence classes are called \emph{connected components}. 
We say that a graph is \emph{locally finite} if for all $x\in X$ the
{\em set of its neighbors} $\{y \in X\mid x\sim y\}$ is finite.

We call $(b,c)$ a graph over $(X,m)$ if  $X$ is equipped with a measure $m$ as above.

\begin{rem} 
(a)  The reader may wonder about the $c$ in the definition of graphs. Indeed, in the literature one can also  find the notion of graph defined without such  $c$.  For our structural considerations the $c$ is  important: With our definition one can show a one-to-one correspondence between graphs and regular Dirichlet forms (an analytic concept) and likewise between graphs and symmetric Markov processes on $X$ with certain regularity properties (a stochastic concept). On a more direct level  the $c$ can easily be seen to be necessary for  restrictions of graph  Laplacians  to subsets to be graph Laplacians again (see below for definitions).

(b)  In our discussion we have assumed
that $X$ is countable. This is not necessary in order to set up the
theory. Indeed, all of the preceding definitions make sense also for
uncountable $X$. However, the summability condition $\sum_{y\in X} b(x,y)
<\infty$ for all $x\in X$ implies that any $x\in X$ can have at most
countably many neighbors.   From this we can then deduce that any connected component must be countable. So, 
even  if we started with an uncountable $X$,  we would end up with an (uncountable) union of disjoint countable graphs, each of which could be treated by the methods presented below. 
For this reason we just assume
countability of $X$ from the very beginning.
\end{rem}

\subsection{The energy form}
Any graph comes with a bilinear map on (a subspace of) $C(X)$. This map underlies all our subsequent considerations.

To a graph $(b,c)$ over $X$, we  associate the map
$$\mathcal{Q} =\mathcal{Q}_{b,c} \colon C(X)\to [0,\infty]$$
defined by
$$\mathcal{Q}(f) := \frac{1 }{2 } \sum_{x,y \in X} b(x,y) (f(x) - f(y))^2  + \sum_{x \in X} c(x) f(x)^2.$$
We call $Q$ the \textit{energy form} associated to $(b,c)$.  Then, $$\mathcal{D} =\mathcal{D}_{b,c}:=\{ f\in C(X) \mid \mathcal{Q}(f)<\infty\}$$
is a subspace of $C(X)$ containing $C_c (X)$. The elements of $\mathcal{D}$ are called \emph{functions of finite energy}.   
   Fatou's lemma easily gives that the energy form is lower semicontinuous, i.e.,  if a sequence $(f_n)$ in $C(X)$ converges
    pointwise to $f\in C(X)$, then
 $$\mathcal{Q} (f) \leq \liminf_{n\to\infty} \mathcal{Q} (f_n)$$
    holds.  Recall that a  map $ C \colon \R \to \R  $ with
		$$ C(0) =0 \quad\mbox{ and }\quad |C(u) - C(w)|\leq |u - w|$$
	 for all $u,w\in \R$ is called \textit{normal contraction}. Modulus and positive and negative part of a number  are examples for normal contractions. Now, a crucial feature of $\mathcal{Q}$ is that it   is  compatible with normal contractions in that 
	$$\mathcal{Q}(Cf)\leq \mathcal{Q}(f)$$
	holds for any $f\in C(X)$ and any normal contraction $C$, where we use the notation $Cf = C  \circ f$. Indeed, this follows by a direct computation.

The map $\mathcal{Q}$ satisfies the parallelogram identity. By polarization we can therefore lift it to the bilinear map    							
$$\mathcal{Q} \colon \mathcal{D}\times \mathcal{D} \to \R$$
given  by 
$$\mathcal{Q}(f,g)=\frac{1 }{2 } \sum_{x,y \in X} b(x,y)  {(f(x) - f(y))} (g(x)- g(y)) + \sum_{x \in X} c(x) {f(x)} g(x).$$
Clearly, $\mathcal Q$ is symmetric and, for all $f \in \mathcal D$, we have  
$$\mathcal{Q}(f,f) =\mathcal{Q}(f) \geq 0.$$

\subsection{The formal  Laplacian}
Besides the energy form $\mathcal{Q}$ associated to a graph we will
also consider the \textit{formal Laplacian}. {To this end, we fix a discrete measure space $(X,m)$ and consider graphs over $(X,m)$.}

Let $(b,c)$ be  graph over $(X,m)$. We define the operator $\mathcal{L}{:=\mathcal{L}_{b,c}} \colon \cF \to C(X)$ on
$$
\mathcal{F} = \mathcal{F}_{b} := \{ f \in C(X)  \mid \sum_{y\in X} b(x,y) | f(y)|<\infty \; \mbox{ for all }   x\in X  \}$$
by
$$\mathcal{L}  f(x) :=  \frac{1}{m(x)}\sum_{y\in X} b(x,y) (f(x) - f(y)) + \frac{c(x)}{m(x)} f(x).$$
We call $\mathcal{L}$ the \textit{formal Laplacian} associated to $(b,c)$ over $(X,m)$. The word ``formal'' is used as this is not an operator in an $\ell^2$ space.

The  operator $\mathcal{L}$ has a certain symmetry property and the form $\mathcal{Q}$ and operator $\mathcal{L}$ are related by an  integration by parts formula, which we refer to as Green's formula: Specifically,  it is easy to see that
$$C_c (X)\subset \mathcal{D}\subset \mathcal{F}$$
holds and,  for all $f\in \mathcal{D}$  and $\varphi\in C_c (X)$, the equality
$$\mathcal{Q} (\varphi,f)  = \sum_{x\in X}  {\varphi(x)} \mathcal{L} f (x)m(x) = \sum_{x\in X} \mathcal{L} \varphi (x) f (x)m(x)$$
is valid.
Green's formula is even valid in the following more general form:  For $f\in \mathcal{F}$ and $\varphi \in C_c (X)$, we have
        \begin{multline*}
             \frac{1}{2}\sum_{x,y\in X} b(x,y) (\varphi (x) - \varphi (y)) (f(x) -f (y)) + \sum_{x\in X} c(x)  \varphi (x) f(x)\\
              =\sum_{x\in X}  \varphi (x) \mathcal{L} f (x)m(x)  = \sum_{x\in X} \mathcal{L} \varphi (x) f(x)m(x),
        \end{multline*}         where all of the sums are absolutely convergent.

We now turn to  two fundamental equations involving the Laplacian: The \textit{Poisson equation} has the form 
$$(\mathcal{L} + \alpha) u = f,$$
where $\alpha \geq 0$ and $f\in C(X)$ are given and $u\in \mathcal{F}$ is the looked for solution. The case $f =0$ is particularly relevant. 
We say that  $u$ is $\al$-\emph{harmonic} if $u \in \mathcal{F}$ and it satisfies
$$(\mathcal{L}+\al)u = 0.$$
When $\al=0$, we say that $u$ is \emph{harmonic}.

The \textit{heat equation} reads 
$$(\mathcal{L}+\partial_t)u=0.$$
Specifically, a function $ u\colon[0,\infty)\times X
\to \R $ is called a \emph{solution of the heat
equation} if, for every $ x\in X $, the mapping $ t\mapsto u_{t}(x)
$ is continuous on $ [0,\infty) $ and differentiable on $ (0,\infty)
$, $ u_{t} \in \mathcal{F} $ for all $ t>0 $,  and
$$(\mathcal{L}+\partial_{t})u_{t}(x)=0$$
for all $x \in X$ and $t>0$.  If $u$ is a solution of
the heat equation and $u_0=f$ for $f \in C(X)$, then $f$ is called
the \emph{initial condition} for $u$. We will say that $u$ satisfies
the heat equation with initial condition $f$ in this case. We think
of $x$ as a space variable and $t$ as time.

We are going to treat heat equation and Poisson equation via operators on Hilbert space. In order to do so we will have to provide Hilbert spaces and also recall some abstract theory of forms and operators. We recall the abstract theory in the next section and subsequently then deal with  the Hilbert spaces in our context. These Hilbert spaces have the form $\ell^2 (X,m)$.

\section{Dirichlet forms and their  generators}
In this section we briefly recall the abstract theory of Dirichlet forms and their operators. For the abstract theory, we refer to \cite{FOT} and in a second step we focus on the special case of Dirichlet forms on discrete measure space which are most relevant here, see \cite{KLW21} for a recent monograph.
\subsection{Abstract framework}
Forms and positive operators can be seen as two faces of the same medal. The advantage of forms is that they have  a larger domain of definition and are generally much more easily written down than the underlying operator.

Assume that $\hilbert$ is a real Hilbert space. An operator $L$ on $\hilbert$ is a linear map from a subspace $D(L)$ of $\hilbert$, called the \textit{domain} of $L$,   to $\hilbert$. If $D(L)$ is dense, then the adjoint  $L^*$ of $L$ is the operator with domain $D(L^*) $ given by
$$\{ f\in \hilbert \mid \mbox{there exists $g\in \hilbert$ with $\langle f,  L h\rangle = \langle g, h\rangle$ for all $h\in D(L)$} \}$$
and acting by 
$$L^* f = g.$$
The operator $L$ is called \textit{self-adjoint} if $D(L)$ is dense and $L = L^*$ holds. A self-adjoint operator is called \textit{positive} if $\langle L f,f\rangle \geq 0$ holds for all $f\in D(L)$. We write $L\geq 0$ whenever $L$ is a positive operator.
    
		A \emph{symmetric positive form}  $Q$ on  the Hilbert space $\hilbert$
    consists of a subspace $D(Q) \subset \hilbert$ called the \emph{domain
        of} $Q$ together with a  map
    $$Q \colon  D(Q) \times D(Q) \to  \R$$
    satisfying
    \begin{enumerate}

        \item[$\bullet$] $Q(f,g) = {Q(g,f)}$ \hfill $ ($\emph{``Symmetry''}$ ) $

        \item[$\bullet$] $Q(f,\alpha g + \beta h) = \alpha Q (f,g) + \beta Q(f,h)$ \hfill $ ($\emph{``Linearity''}$ )$ 

        \item[$\bullet$] $Q(f,f)\geq 0$ \hfill $ ($\emph{``Positivity''}$ ) $
    \end{enumerate}
    for all $f, g, h \in D(Q)$ and $\alpha, \beta \in  \R$. It is called {\em densely defined} if $D(Q)$ is dense in $\hilbert$. We will often refer to positive symmetric forms as just forms. Whenever such a form $Q$ is given, we define, for $f \in \hilbert$, the value
$$Q'(f) := \begin{cases}
           Q(f, f)  & \textup{if } f\in D(Q)\\
           \infty  & \textup{otherwise}.\\
           \end{cases}
$$
We note that we can recover the form $Q$ from the values $Q'(f)$ for $f \in \hilbert$ as the domain of $Q$ is given by
$$D(Q) =\{ f\in\hilbert \mid Q' (f) < \infty\}$$
and $Q(f,g)$ can be obtained by using the polarization identity. For this reason we will subsequently  write $Q(f)$ instead of $Q'(f)$. This viewpoint  lets  us compare the size of two forms $Q, \widetilde Q$ on $\hilbert$, even though they may live on different domains. More precisely, we write
$$Q \leq \widetilde Q$$
if $Q(f) \leq \widetilde Q(f)$ for all $f \in \hilbert$. Spelled out, this means $D(\widetilde Q) \subset D(Q)$ and $Q(f) \leq \widetilde Q(f)$ for all $f \in D(\widetilde Q)$.

Whenever $Q$ is a form, then 
$$Q_1:= Q +\langle \cdot,\cdot \rangle$$
is an inner product.  The associated norm is referred to as \textit{form norm} and denoted by $\av{\cdot}_Q$. The form is called \textit{closed} if $D(Q)$ is a complete with respect to the form norm. Equivalently, the form is closed if $D(Q)$ is   a Hilbert space  with respect to this inner product $Q_1$.  It is a fundamental result that the  form $Q$ is closed if and only if $Q'$ is lower semicontinuous on $\hilbert$.

There is a one-to-one correspondence between densely defined closed forms  and positive operators: To any densely defined closed form $Q$ there exists a unique self-adjoint operator $L=L_Q\geq 0$ with
$$Q(f,g) =\langle f, Lg\rangle$$
for all $f\in D(Q)$ and $g\in D(L)$.  Conversely, whenever a positive self-adjoint $L$ is given, using functional calculus we obtain a densely defined form $Q_L$ via $D(Q_L) = D(L^{1/2})$ and
$$Q_L (f,g) = \langle L^{1/2} f, L^{1/2} g\rangle.$$
The maps $L\mapsto Q_L$ and $Q\mapsto L_Q$ are inverse to each other. 

Any positive operator or, equivalently, any form comes with two canonical families of operators:   We denote the set of bounded linear   operators on the Hilbert space $\mathcal{H}$ by $B(\mathcal{H})$. 

The map $$T \colon [0,\infty)\to B(\mathcal{H}),\quad t\mapsto e^{-t L}, $$
satisfies 
$$T_0 = \mathrm{id} \quad\mbox{ and } \quad T_{t+s} =T_t \circ T_s$$
for all $t,s\geq 0$.  The family $(T_t)_{t \geq 0}$ is referred to as the \textit{semigroup} associated to $L$ (or $Q$).  For any $f\in D(L)$, the function $u: [0,\infty)\to \hilbert$, $  t\mapsto T_t f$ is the unique solution of the heat type  equation
$$(L+\partial_{t})u=0,\qquad
 u(0) =f.   $$
The map $$G \colon(0,\infty)\to B(\mathcal{H}),\quad  G_\alpha = (L+\alpha)^{-1},$$
satisfies
$$G_\alpha - G_\beta = (\beta - \alpha) G_\alpha G_\beta$$ for all $\alpha,\beta>0$. The family $(G_\alpha)_{\alpha > 0}$ is referred to as 
\textit{resolvent} of $L$ (or $Q$). For any $f\in \hilbert $ the vector $G_\alpha f$ is the unique solution oft the Poisson type equation  $$ (L+\alpha) u = f. $$

So far we have considered forms and operators on an arbitrary Hilbert space. Now, we assume that the Hilbert space $\mathcal{H}$ is an $L^2$-space (as e.g. $\ell^2 (X,m)$  above). Then,  in order to really   qualify as heat equation or Poisson equation, we want our semigroup and resolvent to map $f$ with $0\leq f\leq 1$ to $g$ with $0\leq g\leq 1$.  This property has a name on its own:  A bounded operator $A$ is called \textit{Markovian} if $$ 0\leq A f\leq 1 $$ holds for all $f$ with  $0\leq f\leq 1$. The forms giving rise to semigroups of Markovian operators have a characteristic feature and are introduced next.

\begin{defi}[Dirichlet form] A form on an $L^2$-space is called {\em Markovian} if it is compatible with all normal contractions (i.e.,  satisfies $Q(Cf)\leq Q(f)$ for all $f$ in the $L^2$-space and all normal contractions $C$). A closed densely defined Markovian form $Q$ on an $L^2$-space is called a \emph{Dirichlet form}.
\end{defi}

 The crucial  result is that Dirichlet forms are are exactly the forms whose semigroups and resolvents consist of Markovian operators.
 
\begin{thm}[Characterization of Dirichlet forms] Let $Q$ be a closed form on an $L^2$-space with associated semigroup $(T_t)_t$ and resolvent $(G_\alpha)$. Then, the following assertions are equivalent:
\begin{enumerate}[(i)]
\item $Q$ is a Dirichlet form.
\item $T_t$ is Markovian for each $t\geq 0$.
\item $\alpha G_\alpha$ is Markovian for each $\alpha >0$.
\end{enumerate}
\end{thm}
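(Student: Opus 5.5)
I will go around the circle (ii) $\Rightarrow$ (iii) $\Rightarrow$ (i) $\Rightarrow$ (ii). Throughout, the closed form $Q$ is densely defined (as implicit in the statement), so that $L=L_Q$, $T_t=e^{-tL}$ and $G_\alpha=(L+\alpha)^{-1}$ are available.

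\textbf{(ii) $\Rightarrow$ (iii).} This follows from the Laplace transform representation
$$\alpha G_\alpha f=\int_0^\infty \alpha e^{-\alpha t}\,T_t f\,dt.$$
The right-hand side is a strongly convergent Bochner integral against a probability measure on $[0,\infty)$. The order interval $\{g \mid 0\le g\le 1\}$ is closed and convex in the $L^2$-space. Hence for $0\le f\le 1$ the integral stays in this set, and $\alpha G_\alpha$ is Markovian.

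\textbf{(iii) $\Rightarrow$ (ii).} I will not need this direction directly, since the cycle closes through (i). If one wanted it anyway, it follows from the exponential formula
$$T_t f=\lim_{n\to\infty}\Bigl(\tfrac{n}{t}G_{n/t}\Bigr)^n f,$$
because powers of Markovian operators are Markovian and the limit preserves the closed order interval.

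\textbf{(iii) $\Rightarrow$ (i).} I use the approximating forms
$$Q^{(\beta)}(f)=\beta\langle f-\beta G_\beta f,\,f\rangle, \qquad \beta>0.$$
By spectral calculus, $Q^{(\beta)}(f)$ increases to $Q(f)$ as $\beta\to\infty$, and the limit is $+\infty$ exactly when $f\notin D(Q)$. So it suffices to show $Q^{(\beta)}(Cf)\le Q^{(\beta)}(f)$ for every normal contraction $C$.

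Write $S=\beta G_\beta$. This is a symmetric operator, and it is sub-Markovian by (iii). Since $S$ is positivity preserving, it has a kernel representation by a positive symmetric measure $\sigma$ on the product space, with marginal $s\le 1$. This gives
$$\langle f-Sf,f\rangle=\tfrac12\int (f(x)-f(y))^2\,d\sigma(x,y)+\int (1-s)f^2\,dm.$$
Both terms decrease when $f$ is replaced by $Cf$, because $|Cf(x)-Cf(y)|\le|f(x)-f(y)|$ and $|Cf|\le|f|$. In the discrete setting relevant to this paper the kernel is just a matrix, so this step is elementary there.

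\textbf{(i) $\Rightarrow$ (iii).} Fix $\alpha>0$ and $0\le f\le 1$, and set $u=\alpha G_\alpha f$. Then $u$ is the unique minimizer over $D(Q)$ of
$$\Phi(v)=Q(v)+\alpha\|v-f\|^2 .$$
Let $C(t)=(t\vee 0)\wedge 1$, which is a normal contraction. Since $0\le f\le1$, we have pointwise $|Cu-f|\le|u-f|$, and $Q(Cu)\le Q(u)$ by (i). Hence $\Phi(Cu)\le\Phi(u)$. By uniqueness of the minimizer, which comes from strict convexity of $\Phi$, we get $u=Cu$, so $0\le u\le1$.

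\textbf{(i) $\Rightarrow$ (ii).} Combine (i) $\Rightarrow$ (iii) with the exponential formula above.

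\textbf{Main obstacle.} I expect the main difficulty in (iii) $\Rightarrow$ (i) for a general $L^2$-space. There the kernel representation of a sub-Markovian symmetric operator needs a measure-theoretic argument, namely constructing $\sigma$ as a bimeasure. The fallback is to verify first the two special contractions $f\mapsto |f|$ and $f\mapsto f\wedge 1$. These can be checked using only positivity of $S$ and $S1\le1$, via the identity
$$\langle f-Sf,f\rangle=\langle |f|-S|f|,|f|\rangle+2\langle Sf_+,f_-\rangle.$$
One then reduces general normal contractions to these two by the standard argument in \cite{FOT}.
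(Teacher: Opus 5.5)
The paper does not prove this theorem; it quotes it from the abstract theory in \cite{FOT}. Your cycle is the standard argument there:
\begin{itemize}
\item the Laplace transform and the exponential formula for (ii)$\Leftrightarrow$(iii);
\item the variational characterization of $\alpha G_\alpha f$ as the unique minimizer of $Q(v)+\alpha\|v-f\|^2$ for (i)$\Rightarrow$(iii);
\item the monotone approximating forms $Q^{(\beta)}$ for (iii)$\Rightarrow$(i).
\end{itemize}
These steps are correct as written. On $\ell^2(X,m)$ with $X$ countable, the kernel is just $\sigma(x,y)=\langle S1_{\{y\}},1_{\{x\}}\rangle$ with $\sum_y\sigma(x,y)\le m(x)$. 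So in the setting of this paper your proof is complete.

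For a general $L^2$-space, however, the step you flag is a genuine gap, and your fallback does not close it. Extending $(A,B)\mapsto\langle S1_A,1_B\rangle$ to a measure on the product $\sigma$-algebra can fail without some regularity of the underlying measure space; in \cite{FOT} the measure $\sigma_\beta$ on $X\times X$ is obtained using that $X$ is locally compact and separable. Moreover, the passage in \cite{FOT} from the unit contraction to arbitrary normal contractions runs back through the Markovian resolvent and this same kernel formula, so deferring to it is circular. Incidentally, your identity should read $\langle f-Sf,f\rangle=\langle |f|-S|f|,|f|\rangle+4\langle Sf_+,f_-\rangle$; only the sign of the last term matters.

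You can avoid the kernel entirely by working with simple functions. Take $f=\sum_{i=1}^n a_i1_{A_i}$ with pairwise disjoint $A_i$ of finite measure, let $A=\bigcup_i A_i$, and set $s_{ij}=\langle S1_{A_j},1_{A_i}\rangle$. Then $s_{ij}=s_{ji}\geq 0$, and $\sum_j s_{ij}=\langle S1_A,1_{A_i}\rangle\le m(A_i)$ because $0\le S1_A\le 1$. A direct computation gives
\[
\langle f-Sf,f\rangle=\tfrac12\sum_{i,j}s_{ij}(a_i-a_j)^2+\sum_i a_i^2\Bigl(m(A_i)-\sum_j s_{ij}\Bigr).
\]
Since $Cf=\sum_i C(a_i)1_{A_i}$ and both sums have nonnegative coefficients, both terms decrease under $C$. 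Three facts then extend the inequality to all of $L^2$:
\begin{itemize}
\item simple functions of this type are dense in $L^2$;
\item $\|Cf-Cg\|\le\|f-g\|$;
\item $g\mapsto\langle g-Sg,g\rangle$ is continuous.
\end{itemize}
Hence $\langle Cf-SCf,Cf\rangle\le\langle f-Sf,f\rangle$ for all $f\in L^2$ on any measure space, which is all your approximating-form argument needs.
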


It turns out that Dirichlet forms can not only be characterized by compatibility with all normal contractions but compatibility with certain contractions suffices. For example, a closed form $Q$ is a Dirichlet form if $Q(f\wedge 1) \leq Q(f)$ holds for all $f$ in the domain of $Q$.

\subsection{Dirichlet forms on graphs}
In this section we use the abstract form theory presented in the last section in order to introduce two forms associated to a graph over $X$ provided $X$ is equipped with a measure $m$.  The relevant Hilbert space will be $\ell^2 (X,m)$.

Let a graph $(b,c)$ over $(X,m)$ be given.  Let $\mathcal{Q}$ be the associated energy form and $\mathcal{D}$ its domain. We are interested in closed forms $Q$ that are restrictions of $\mathcal{Q}$ and whose domain contains $C_c (X)$.  There are two natural candidates for such restrictions, which can be thought of as having the  maximal one and the minimal possible domain. 

The maximal possible domain  for a restriction of $\mathcal{Q}$ to $\ell^2(X,m)$ is clearly $\mathcal{D}\cap \ell^2 (X,m)$. We define the associated form $Q^{(N)} = Q^{(N)}_{b,c}$ by 
$$D ({Q}^{(N)} ) := \mathcal{D} \cap \ell^2  (X,m) \quad\mbox{ and }\quad {Q}^{(N)} (f,g) :=\mathcal{Q}(f,g)$$
for $f,g \in D({Q}^{(N)})$.  Then, clearly ${Q}^{(N)}$ is symmetric and positive as $\mathcal{Q}$ has these properties.  We think of
$Q^{(N)}$ as arising from some sort of Neumann boundary conditions and this is the reason for the superscript $(N)$.  We will refer to $Q^{(N)}$ as
the \emph{Neumann form}. From lower semicontinuity of $\mathcal{Q}$ we infer lower semicontinuity of $Q^{(N)}$, see e.g. \cite{RSI,KLW21}. Therefore $Q^{(N)}$ is a closed from.  Moreover, it turns out immediately that $Q^{(N)}$ is a Dirichlet form.

The associated operator is denoted by $L^{(N)}$ and referred to as \emph{Laplacian with Neumann boundary conditions}. At this point this is just a notation, as we have not yet introduced a boundary on which boundary conditions may be considered. We will later see that it can be given sense. From Green's formula it can easily be derived that $L^{(N)}$ is a restriction of the formal Laplacian $\mathcal{L}$, i.e.,
$$L^{(N)} f = \mathcal{L} f$$
holds for all $f\in D(L^{(N)})$.

By construction $Q^{(N)}$ is  maximal in terms of having the biggest possible domain. Similarly, there is a minimal form. This form comes about by considering all symmetric closed forms which are restrictions of $Q^{(N)}$ (or $\mathcal{Q}$)  and whose domain contains $C_c (X)$. The intersection over the domains of all such forms will be a form norm closed subspace of $D(Q^{(N)})$. Hence, the restriction of $\mathcal{Q}$ to this domain will yield a positive closed form. We  denote this form  by $Q^{(D)} := Q^{(D)}_{b,c}$ and its domain by $D (Q^{(D)}) := D(Q^{(D)}_{b,c})$. By construction, $Q^{(D)}$ is the smallest closed form extending the restriction of $\mathcal{Q}$ to $C_c (X) \times C_c (X)$. We can also obtain $D(Q^{(D)})$ by taking the form norm closure of $C_c(X)$. We think of $Q^{(D)}$ as arising from some sort of Dirichlet boundary conditions and this is the reason for the superscript $(D)$. By construction the form $Q^{(D)}$ is closed. Moreover,  it turns out that $Q^{(D)}$ is a Dirichlet form, \cite[Lemma~1.16]{KLW21}.  We denote the associated self-adjoint operator by $L^{(D)}$ and refer to as Dirichlet Laplacian.

Again, at this point this is purely formal piece of notation. We will later give it sense. As above,  Green's formula  implies that $L^{(D)}$ is a restriction of the formal Laplacian $\mathcal{L}$, i.e.,
$$L^{(D)} f = \mathcal{L} f$$
holds for all $f\in D(L^{(D)})$. Indeed, with the same argument, one obtains that the self-adjoint operator of a closed form that is a restriction of $Q^{(N)}$ and an extension of $Q^{(D)}$ is a restriction of $\mathcal L$.

\begin{rem} (a) It is possible for  $Q^{(N)}$ and $Q^{(D)}$ to  agree. This is the case if and only if $\mathcal{Q}$ has only one closed restriction to $\ell^2 (X,m)$ whose domain contains $C_c (X)$. Quite a bit of research has been devoted to giving criteria for this to happen.

For example, both forms agree if $\inf_{x\in X}  m(x) >0$  (e.g. if $m =1$)  or if there exists a $\kappa >0$ such that  $\sum_{y \in X} b(x,y) + c(x)\leq \kappa m(x)$ for all $x\in X$, see e.g. \cite{KLW21}.

In general, however, $Q^{(N)}$ and $Q^{(D)}$ will be different. In the last part of this article  We will have much to say on how possible Dirichlet forms between $Q^{(D)}$ and $Q^{(N)}$ can be described via boundaries (and on what is actually meant by Dirichlet form between $Q^{(D)}$ and $Q^{(N)}$).

(b) As discussed above both $L^{(D)}$ and $L^{(N)}$ are restrictions of $\mathcal{L}$. In general, it is hard to describe explicitly their domains. If $\mathcal{Q}$ possesses only one closed restriction (see (a) of this remark), then $L^{(D)}$ and $L^{(N)}$ agree and their domain is the maximal one, i.e., consists of those $f \in D(Q^{(N)})$ with $\mathcal{L} f \in \ell^2 (X,m)$.
\end{rem}

Let $\av{\cdot}_\infty$ denote the supremum norm on $C_c(X)$. A Dirichlet form $Q$ over $(X,m)$ is called \textit{regular}\index{form!regular Dirichlet} if $D(Q)\cap C_c (X)$ is dense in both $C_c (X)$ with respect to $\av{\cdot}_\infty$ and in  $D(Q)$ with respect
to the form norm $\av{\cdot}_Q$. It turns out that a Dirichlet form $Q$ on $(X,m)$ is regular if and only if $C_c(X)\subset D(Q)$ and $Q$ is the closure of the restriction of $Q$ to the subspace $C_c(X)$. A further investigation then yields the following result, see \cite[Theorem~1.18]{KLW21}. 

\begin{thm}[Regular Dirichlet forms and graphs]\label{IG:t:graphs_regular_df}
    The map $$(b,c) \mapsto Q^{(D)}_{b,c,m}$$ is a bijective correspondence
    between graphs $(b,c)$ over $(X,m)$ and regular Dirichlet forms over $(X,m)$.
\end{thm}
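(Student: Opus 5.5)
We have to show three things: the map is well defined, it is injective, and it is surjective. Well-definedness is already in place. For a graph $(b,c)$ over $(X,m)$, the form $Q^{(D)}_{b,c}$ is a Dirichlet form, and by construction it is the closure of the restriction of $\mathcal{Q}$ to $C_c(X)$. By the characterization of regularity recalled just before the statement (namely $C_c(X)\subset D(Q)$ and $Q$ is the closure of its restriction to $C_c(X)$), the form $Q^{(D)}_{b,c}$ is regular.

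For injectivity we recover $(b,c)$ from the values of the form on $C_c(X)$. Write $1_x := 1_{\{x\}}$. For $x\neq y$ the formula for $\mathcal{Q}(f,g)$ gives
$$\mathcal{Q}(1_x,1_y) = -b(x,y).$$
Moreover, using $\sum_y b(x,y)<\infty$ for a finite set $K\ni x$, we get
$$\mathcal{Q}(1_x,1_K) = \sum_{y\notin K} b(x,y) + c(x).$$
Letting $K\uparrow X$ gives $c(x)=\lim_K \mathcal{Q}(1_x,1_K)$. Hence two graphs with the same $Q^{(D)}$ coincide.

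Surjectivity is the main step. Let $Q$ be a regular Dirichlet form on $(X,m)$. Then $C_c(X)\subset D(Q)$, and we define
$$b(x,y):=-Q(1_x,1_y) \quad (x\neq y),\qquad b(x,x):=0.$$
The plan has four steps.
\begin{enumerate}[(1)]
\item Show $b\geq 0$. Apply the Markov property to the normal contraction $t\mapsto t\wedge$-type truncations, or more simply to $f=1_x - \varepsilon 1_y$ with $Cf = f_+ = 1_x$. This gives $Q(1_x)\leq Q(1_x-\varepsilon 1_y)$ for small $\varepsilon>0$, so $-2\varepsilon Q(1_x,1_y)+\varepsilon^2 Q(1_y)\geq 0$, and letting $\varepsilon\to 0$ yields $Q(1_x,1_y)\le 0$.
\item Show $\sum_y b(x,y)<\infty$ and $Q(1_x,1_K)\geq 0$ for finite $K\ni x$. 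Use the contraction $f\mapsto f\wedge 1$ applied to $f=1_K+\varepsilon 1_x$, which has $f\wedge 1=1_K$. This gives $Q(1_K)\le Q(1_K+\varepsilon 1_x)$, hence $Q(1_x,1_K)\ge 0$. Expanding,
$$Q(1_x,1_K)=Q(1_x)-\sum_{y\in K\setminus\{x\}} b(x,y),$$
so $\sum_{y\in K\setminus\{x\}} b(x,y)\le Q(1_x)$ for every finite $K$, and the full sum over $y$ is at most $Q(1_x)$.
\item Set $c(x):=Q(1_x)-\sum_y b(x,y)\ge 0$, which is nonnegative by the bound in step (2).
\item By bilinearity, every $f\in C_c(X)$ satisfies $Q(f)=\sum_{x,y} f(x)f(y)Q(1_x,1_y)$. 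Rearranging this finite sum with the definitions of $b$ and $c$ gives exactly $\mathcal{Q}_{b,c}(f)$.
\end{enumerate}
Thus $Q$ and $Q^{(D)}_{b,c}$ agree on $C_c(X)$. Both are closures of their restrictions to $C_c(X)$, the first by regularity and the second by definition, so they coincide.

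The main obstacle is step (2). It requires choosing contractions whose application to finitely supported test functions yields precisely the sign condition and the summability bound; the choices above ($f_+$ and $f\wedge 1$) appear to suffice. The remaining steps are finite algebraic identities.
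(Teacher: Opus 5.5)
Your proof is correct and complete. The paper gives no argument of its own here and simply cites \cite[Theorem~1.18]{KLW21}. Your route is the standard direct proof of that result: read off $b(x,y)=-Q(1_x,1_y)$ and $c(x)=Q(1_x)-\sum_y b(x,y)$ from indicator functions, use the Markov property (via $f_+$ and $f\wedge 1$) to obtain $b\geq 0$, $\sum_y b(x,y)\leq Q(1_x)$ and $c\geq 0$, and conclude by uniqueness of the closure from $C_c(X)$.
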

\begin{rem} Regular Dirichlet forms are in a one-to-one correspondence with symmetric Markov processes with suitable regularity properties. As already alluded to  above it is due to these correspondences between graphs, Dirichlet forms and Markov processes  that we include the killing term $c$ in our definition of graphs. 
\end{rem}

\section{Intrinsic metrics}\label{sec-intrinsic}
In this section we introduce the (pseudo)metrics relevant for our
considerations and discuss their properties.

A symmetric function $\sigma  \colon X \times X \to [0,\infty)$ is called a \emph{pseudometric} if it satisfies the triangle inequality, i.e.,
if for all $x,y,z \in X$ it satisfies
$$\sigma(x,y) \leq \sigma(x,z) + \sigma(z,y).$$
For $r \geq 0$ and $x \in X$ we denote the corresponding ball of radius $r$ around $x$ by
$${B_r(x) :=}B_r^\sigma(x) := \{y \in X \mid \sigma(x,y) \leq r\}.$$
For $x \in X$ the distance $\sigma_U$ from a nonempty subset $U\subset X$ is defined by
$$\sigma_U(x) := \sigma(x, U) := \inf\limits_{y\in U} \sigma(x,y)$$
and the \emph{diameter} of $U$ with respect to $\sigma$ is
$$\mathrm{diam}_\sigma(U) := \sup\limits_{x,y \in U} \sigma(x,y).$$
A function $f$ on $X$ is called \textit{Lipschitz-function} with respect to the pseudometric $\sigma$ if there exists a $C\geq 0$ with
$$|f(x) - f(y)|\leq C \sigma(x,y)$$ 
for all $x,y\in X$. We then also say that $f$ is a $C$-Lipschitz function.  The set of all Lipschitz-functions with respect to $\sigma$ is denoted by ${\rm Lip}_\sigma(X)$.

For a graph $(b,c)$ over $X$,  a pseudometric $\sigma$ is called \emph{intrinsic with respect to the measure $m$} if, for all $x \in X$, it satisfies
$$\frac{1}{2}\sum_{y \in X} b(x,y)\sigma(x,y)^2 \leq m(x).$$
We note that it depends on both the graph and the measure whether a given pseudometric on $X$  is intrinsic. This condition can be seen as a discrete version of $|\nabla \sigma(x,\cdot)| \leq 1$, $x \in X$, see the discussion at the end of this section.

Clearly, a pseudometric $\sigma$ is intrinsic with respect to some finite measure  if and only if
$$\sum_{x,y \in X} b(x,y)\sigma(x,y)^2 <\infty$$
holds. This case is of particular interest for us.  Indeed, there are strong ties  between functions of finite Dirichlet energy and intrinsic pseudometrics with respect to a finite measure {in the case \begin{align*}
c=0.
\end{align*}} The Lipschitz functions with respect to such an intrinsic metric are functions of finite energy and, conversely, functions of finite energy induce such intrinsic metrics. Details are given next.

Whenever   $\sigma$ is   an intrinsic pseudometric with respect to the finite measure $m$ on $X$, then  any function $f$ that is  $C$-Lipschitz with respect to $\sigma$  and constant  on  $U \subset X$  satisfies {whenever 
$c=0$}
$$Q(f)\leq  C^2 \min\{m(X),2 m(X\setminus U)\}.$$
In particular,  with $U = \emptyset$, we obtain  the inequality  $$Q(f)\leq C^2 m(X)$$  for any $C$-Lipschitz function $f$
with respect to $\sigma$ so that  Lipschitz function with
respect to $\sigma$ belong to $\mathcal{D}$. Moreover, we also infer 
the inequality
 $$ \mathcal{Q}(\sigma_U) \leq \min\{m(X), 2 m(X\setminus U)\} $$
 and, $\sigma_U $ belongs to $ \mathcal{D}$. Conversely,  for any function $f$ of finite energy,
 the function
 $$\sigma_f \colon X\times X \rightarrow [0,\infty), \quad \sigma_f(x,y) := |f(x) - f(y)|$$
 is an intrinsic pseudometric with respect to the finite  measure $m_f$  given by
$$m_f(x) = \frac{1}{2}\sum\limits_{y\in X} b(x,y)\sigma_f(x,y)^2$$
and the function $f$ is  $1$-Lipschitz  with respect to $\sigma_f$ with  $m_f(X) =   \mathcal{Q}(f)$. In general, this $\sigma_f$ will not be a metric (as values of $f$ in different points need not be distinct). However, this can easily be achieved by an arbitrarily small perturbation. So, when dealing with intrinsic pseudometrics with respect to a finite measure we can always assume without loss of generality that these are actually metrics rather than just pseudometrics. Even more is true,  there  always exists an intrinsic metric with respect to some  finite measure  that induces the discrete topology.

For us   (intrinsic) pseudometrics coming from paths will play a special role. These are discussed next.  Given a symmetric function $w \colon X \times X \to [0,\infty)$ and a (possibly infinite) path $\gamma = (x_1,x_2,\ldots)$ in the graph, we define the length of $\gamma$ with respect to $w$ by
$$L_w (\gamma) := \sum_i w(x_i,x_{i+1}) \in [0,\infty].$$
If the graph is connected, this induces the \emph{path pseudometric} $d_w$ on $X$  via
$$d_w(x,y) = \inf \{L_w(\gamma) \mid \gamma \text{ is  a path  from $x$ to $y$}\}.  $$
We say that $\sigma$ is a {\em path pseudometric} on $X$ if $\sigma = d_w$ for some symmetric function  $w$. 
A symmetric  function $w$ is called \emph{adapted} to  the measure $m$ if for all $x \in X$ it satisfies
$$\frac{1}{2}\sum_{y \in X} b(x,y)w(x,y)^2 \leq m(x).$$
The path metric $d_w$ associated to an adapted $w$ is intrinsic with respect to the measure. 

While not  every intrinsic metric is a path metric, to every intrinsic metric there exists a path metric with the same distance between neighbors. Specifically, whenever $\sigma$ is an intrinsic metric, then $\sigma$ is  adapted. Hence, $d_\sigma$ is then an intrinsic metric  and $d_\sigma (x,y) = \sigma (x,y)$ can easily be established for $x,y\in X$ with $x\sim y$ {while we only have $ d_{\sigma}\le \sigma $ in general}. For this reason, intrinsic metrics can often be chosen to be path metrics.

\begin{rem}[Background on intrinsic metrics]
Intrinsic metrics have long proven to be a useful tool in spectral 
geometry of manifolds and, more generally, for strongly local
Dirichlet spaces, see e.g. Sturm's seminal work \cite{Stu94,Stu2}.
For general Dirichlet spaces, including graphs, a systematic
approach was developed in \cite{FLW}. A key point in \cite{FLW} is a
Rademacher type theorem. In the  context of graphs  this theorem
says that a pseudometric $\sigma$ is intrinsic if and only if for
all $1$-Lipschitz functions $f \colon X \to \R$ with respect to
$\sigma$, we have $|\nabla f|^2 \leq 1$. Here, for  $f \in C(X)$ and
$x \in X$, the quantity
 $$|\nabla f|^2 (x): = \frac{1}{2m(x)}\sum_{y \in X} b(x,y)(f(x) - f(y))^2$$
can be interpreted as the square of the norm of the discrete
gradient of $f$ at $x$ (with respect to the measure $m$). For graphs
with measure $m$ for which the scaled degree $\deg /m$ is uniformly bounded the combinatorial metric (the path metric with respect to the weight $w = 1$)  is an
intrinsic metric (up to a constant). For graphs with unbounded {scaled} degree
this is not the case anymore. For such graphs,  intrinsic metrics
(rather than the combinatorial metric) have  turned out to be the
right metrics for  various questions, see e.g. the survey
\cite{Kel}. 
\end{rem}

\part{Liouville theorems on $\ell^p$}

The classic Liouville theorem states that every bounded harmonic function is
constant. We will present  variants of such a result, each
arising  by replacing the assumption of boundedness  by an $
\ell^{p} $ bound together with suitable assumptions on the geometry.  We consider the two cases $1\leq p <\infty$ and $p =\infty$ separately.   The case $p =\infty$ is related to a feature known as stochastic completeness.

Our results below  will rely on some control of the geometry at large. This control will be expressed by features of  of balls with respect to intrinsic metrics.  We will be particularly interested in intrinsic metrics that generate the discrete topology and entail the Heine-Borel-property that any closed ball is compact. It is not hard to see that a metric on a discrete set induces the discrete topology and gives the Heine-Borel-property if and only if all balls of finite radius with respect to this metric are finite. 

Finiteness of balls will be a crucial assumption in our treatment of $p = \infty$. For the case $p =2$ (or more generally $1<p <\infty$), a weaker property suffices: We will assume that  the  \textit{weighted degree} 
$$\mbox{Deg} : X\to [0,\infty)$$
for the graph $(b,c)$ over $(X,m)$ defined by 
  by $$\mbox{Deg} (x) :=\frac{1}{m(x)}\left(\sum_{y\in X} b(x,y) + c(x)\right)$$
is bounded on all balls of finite radius.

\section{The case  $1\leq p <\infty$}\label{sec-Yau-Karp}
As as warm-up we give a simple result on triviality of subharmonic functions in $\ell^p$ provided that all infinite paths have infinite mass. 

\begin{thm}[Criterion for paths with infinite mass]
Let $(b,c)$ be a connected infinite graph over  $(X,m)$. Assume that every infinite  path has infinite measure, i.e.,
$$\sum_{n=1}^{\infty} m(x_n)=\infty$$
for every sequence of pairwise distinct vertices $x_n$ with $b(x_n,x_{n+1})>0$, $n\in\N$. 
Let $\al\geq 0$ and $u\in \mathcal{F}$ with $u\geq 0$ satisfy
\begin{align*}
(\mathcal{L}+\al)u \leq 0.
\end{align*}
Then $u = 0 $ holds if $ u $ belongs to $\ell^p (X,m)$ for some $p \in [1,\infty)$.
\end{thm}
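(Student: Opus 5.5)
The idea is to exploit the sub-mean-value property that $(\mathcal{L}+\al)u\le 0$ imposes pointwise. Then we build an infinite path of pairwise distinct vertices along which $u$ does not decrease. Such a path has infinite mass, which contradicts $u\in\ell^p(X,m)$ unless $u=0$.

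Unwinding the definition of $\mathcal{L}$, the inequality $(\mathcal{L}+\al)u\le 0$ at a vertex $x$ reads
$$\sum_{y\in X} b(x,y)\bigl(u(x)-u(y)\bigr) + \bigl(c(x)+\al m(x)\bigr)u(x)\le 0 .$$
Since $u\ge 0$ and $c,\al\ge0$, this gives $\deg(x)\,u(x)\le \sum_y b(x,y)u(y)$. Suppose $u(x)>0$; then $\deg(x)>0$ by connectedness, as $X$ is infinite. Hence either some neighbour $y$ satisfies $u(y)>u(x)$, or all neighbours satisfy $u(y)=u(x)$, because a weighted average of values all $\le u(x)$ that is $\ge u(x)$ forces equality.

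Now assume $u\neq 0$. We first reduce to $u>0$ everywhere. Let $M=\{x\mid u(x)>0\}$; it is nonempty. Take $x\in M$ with a neighbour $y$ where $u(y)=0$. The dichotomy above says either some neighbour has a strictly larger value, or all neighbours equal $u(x)>0$, which is a contradiction. So at a boundary point of $M$ there is always an increasing neighbour. This does not immediately give $M=X$, and in fact we do not need it.

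Instead, the key construction is the following. Fix $x_1$ with $u(x_1)=:a>0$ and consider the set $S=\{x\mid u(x)\ge a\}$. Since $u\in\ell^p$, we have $m(S)\le a^{-p}\|u\|_p^p<\infty$. We build a path inside $S$ of pairwise distinct vertices. From each $x\in S$ there is a neighbour $y$ with $u(y)\ge u(x)\ge a$, so $y\in S$. Thus the subgraph induced on $S$ has no vertex of degree zero, and for each $x$ there is a neighbour with $u(y)\ge u(x)$.

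The main obstacle is ensuring the vertices are pairwise distinct, since a plateau could make us cycle. I would handle this as follows. If along the greedy choice a strict increase happens infinitely often, we choose a vertex maximizing... Simpler is to split into two cases.

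\emph{Case 1:} $S$ is finite. Take $x\in S$ maximizing $u$ on $S$; its value is a maximum of $u$ on its neighbourhood as well, since neighbours with larger value would lie in $S$. So all neighbours have the same value. Propagating, the set where $u=\max u$ is closed under taking neighbours, and by connectedness it is all of $X$, which is infinite. This contradicts finiteness.

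\emph{Case 2:} $S$ is infinite. Restrict to the connected component $C$ of $x_1$ in the graph induced on $S$. If $C$ is finite, apply the same maximum argument to $C$: a maximizer on $C$ has all neighbours in $X$ with value $\le$ its value. Neighbours with value $\ge a$ lie in $C$, and neighbours with value $<a$ would force strict inequality in the averaging, which is a contradiction. So again the maximum set propagates to all of $X$. If $C$ is infinite, it is connected and infinite, so by König-type reasoning it contains an infinite path of pairwise distinct vertices. For locally finite graphs this is König's lemma. In general, take a shortest-path (BFS) spanning tree of $C$; if every vertex has finitely many descendants along some branch we still need an infinite ray. Here I would instead use directly that $\sum_{x\in C} m(x)\le m(S)<\infty$ and pick any infinite path in $C$ of distinct vertices, for instance a geodesic ray in the combinatorial metric if one exists, or a path visiting infinitely many neighbours of a vertex of infinite degree. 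In all these cases the path's vertices lie in $S$, so its mass is at most $m(S)<\infty$, which contradicts the hypothesis. Hence $u=0$. Getting such a path in non-locally-finite $C$ is the delicate point; a vertex of infinite degree $x$ has infinitely many neighbours, but a path through them needs edges between those neighbours. So I expect to argue via a spanning tree of $C$: an infinite tree either has a vertex of infinite degree, whose edges yield paths of distinct vertices only of length two, or it is locally finite and has a ray. The former is the case I would need to treat more carefully, possibly by choosing $a$ differently along the construction.
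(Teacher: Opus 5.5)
Your Case~1 and your Case~2 with $C$ finite are correct, and for locally finite graphs König's lemma settles Case~2 with $C$ infinite, so in that setting your argument is a complete proof. The gap is Case~2 with $C$ infinite in general. An infinite connected graph that is not locally finite need not contain any infinite path of pairwise distinct vertices, and your proposal leaves this open. The missing idea, which is exactly the paper's argument, is that a strict increase propagates, so plateaus never arise. If $u$ is non-constant, connectedness gives an edge $x_0\sim x_1$ with $u(x_1)>u(x_0)\geq 0$. At $x_1$ the branch ``all neighbours have value $u(x_1)$'' of your dichotomy is excluded, because $x_0$ is a smaller neighbour: if no neighbour were larger, the term $b(x_1,x_0)(u(x_1)-u(x_0))>0$ would force $(\mathcal{L}+\al)u(x_1)>0$. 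So some $x_2\sim x_1$ has $u(x_2)>u(x_1)$. Now $x_1$ is a smaller neighbour of $x_2$, and the step repeats. This gives $u(x_1)<u(x_2)<u(x_3)<\cdots$, hence pairwise distinct vertices with $u(x_1)^p\sum_n m(x_n)\leq \|u\|_p^p<\infty$, contradicting the hypothesis, and no local finiteness is needed. You found the first step (``at a boundary point of $M$ there is always an increasing neighbour'') but did not iterate it; once you do, the sets $S$ and $C$ are unnecessary.

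This leaves $u\equiv C>0$. That forces $c=0$, $\al=0$ and $m(X)<\infty$, so you need some infinite path of distinct vertices to reach a contradiction. The paper disposes of this case with ``the condition on paths easily implies $u=0$'', which is true for locally finite graphs by König's lemma. In general, however, no choice of $a$ or of spanning tree can help, because the statement fails. Let $X=\{o,x_1,x_2,\dots\}$, $b(o,x_n)=b(x_n,o)=2^{-n}$ and $b=0$ otherwise, $c=0$, $m(o)=1$, $m(x_n)=2^{-n}$. Every path of pairwise distinct vertices has at most three vertices, so the hypothesis holds vacuously, yet $u\equiv 1$ satisfies $\mathcal{L}u=0$ and lies in every $\ell^p(X,m)$. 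So your suspicion about the non-locally-finite case is justified, but it concerns only constant $u$. There an extra assumption is needed (local finiteness, $m(X)=\infty$, $\al>0$ or $c\not\equiv 0$), or the conclusion must be weakened to ``$u$ is constant''.
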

\begin{proof}  If $u$ is constant, the  condition on paths easily  implies $u =0$. Thus, we now  consider non-constant $u$. For such $u$ there must exist $x_{0}, x_{1}\in X$ with $x_0\sim x_1$  and $u(x_1) > u(x_0) \geq 0$. Now, if $u(x_1) \geq u(y)$ for all $y \sim x_1$, then
$$
(\mathcal{L}+\al)u(x_1) = \frac{1}{m(x_1)}\sum_{y \in X}b(x_1,y)(u(x_1)-u(y))+\left(\frac{c(x_1)}{m(x_1)}+\al\right)u(x_1) >0 
$$
which gives a contradiction to $(\mathcal{L}+\al)u(x_1) \leq 0$.  Therefore, there exists $x_2 \sim x_1$ with  $u(x_1)<u(x_2)$.
Iterating this argument,  we find an infinite path $(x_{n})$ of vertices such that $0<u (x_1)  < u(x_{n})<u(x_{n+1})$ for all $n \in \N$.  From the assumptions on paths  we then obtain
\begin{align*}
\infty &>  \sum_{n=1}^\infty |u(x_{n})|^{p} m(x_{n}) > |u(x_1)|^{p}\sum_{n=1}^\infty m(x_{n})=\infty,
\end{align*}
which contradicts $u \in \ell^p(X,m)$.
\end{proof}

We now turn to geometric criteria for constancy of subharmonic functions. This will need some preparation.  For  $u \colon X\to  \R$ and $x,y\in X$, we define the gradient of $u$ in $x,y$ by
$$\nabla_{x,y} u  = u(x) -u(y).$$
The basic idea in our subsequent considerations is to replace the functions  $u$ in question by the product $u \eta$ for some  $\eta$, which cuts off $u$ in a neighborhood of a ball. Taking the limit for bigger and bigger balls will then give the desired estimates.  In order to carry out this procedure we will need to  control the gradients of $\eta$  and we will have to compare expressions like $\nabla (u \eta)$ or $\mathcal{L}(u\eta) $  with $\eta \nabla u$ and $\eta \mathcal{L} u$.  The  control of the gradient of $\eta$ will stem from our assuming that $\eta$ comes from intrinsic metrics. The comparison of expressions will rely on the subsequent two results.

\begin{lemm}
	Let $(b,c)$ be a graph over $ (X,m) $. Let $u \in \mathcal{F}$ and $\varphi \in \ell^\infty(X)$ be given.
	Then, $u \varphi$ belongs to $ \mathcal{F}$ and for any $x \in X$	
	$$\mathcal{L}(u \varphi)(x) = \varphi(x) \mathcal{L}u(x) + \frac{1}{m(x)} \sum_{y \in X}b(x,y) u(y) \nabla_{x,y}\varphi$$
holds. 	
\end{lemm}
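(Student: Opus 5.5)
The plan is to verify the two claims directly from the definitions of $\mathcal{F}$ and $\mathcal{L}$, using only absolute convergence of the sums involved.

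\emph{Step 1: $u\varphi\in\mathcal{F}$.} Fix $x\in X$. Since $\varphi\in\ell^\infty(X)$, we have $|u(y)\varphi(y)|\le \|\varphi\|_\infty |u(y)|$ for all $y$. Hence
$$\sum_{y\in X} b(x,y)|u(y)\varphi(y)| \le \|\varphi\|_\infty \sum_{y\in X} b(x,y)|u(y)|<\infty,$$
because $u\in\mathcal{F}$. So $u\varphi\in\mathcal{F}$, and $\mathcal{L}(u\varphi)(x)$ is well defined.

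\emph{Step 2: the product rule.} For each $y$ we use the pointwise splitting
$$u(x)\varphi(x)-u(y)\varphi(y)=\varphi(x)\bigl(u(x)-u(y)\bigr)+u(y)\bigl(\varphi(x)-\varphi(y)\bigr).$$
We multiply by $b(x,y)$ and sum over $y$. The left side is summable by Step 1 together with $\sum_y b(x,y)=\deg(x)<\infty$. On the right:
\begin{itemize}
\item the first term is summable because $\sum_y b(x,y)|u(x)-u(y)|\le |u(x)|\deg(x)+\sum_y b(x,y)|u(y)|$;
\item the second term is summable because $|\nabla_{x,y}\varphi|\le 2\|\varphi\|_\infty$ and $u\in\mathcal{F}$.
\end{itemize}
Since all three series converge absolutely, we may split the sum. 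Dividing by $m(x)$ and adding the killing term $\frac{c(x)}{m(x)}u(x)\varphi(x)=\varphi(x)\frac{c(x)}{m(x)}u(x)$ gives the formula. The killing term combines with $\varphi(x)$ times the first sum to form $\varphi(x)\mathcal{L}u(x)$, and the second sum is the stated gradient term.

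\emph{Main point.} The only step needing care is justifying that the sum can be split. This is settled by the absolute convergence established above, which rests on $\deg(x)<\infty$ and the boundedness of $\varphi$. Beyond that, the argument is a routine computation.
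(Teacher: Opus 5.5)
Your proposal is correct and follows the paper's approach: the paper's proof consists of the same pointwise identity $\nabla_{x,y}(u\varphi) = \varphi(x)\nabla_{x,y}u + u(y)\nabla_{x,y}\varphi$, followed by the remark that this ``easily implies the formula.'' You additionally spell out the absolute-convergence bounds (via $\deg(x)<\infty$ and $\|\varphi\|_\infty<\infty$) that justify splitting the sum, which the paper leaves implicit.
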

\begin{proof} It is obvious that 
	$ \varphi $ belongs to $\mathcal{F}$. Now, a direct computation gives 
	$$\nabla_{x,y}(u\varphi ) = \varphi(x) \nabla_{x,y}u + u(y) \nabla_{x,y} \varphi.$$
	This easily implies the formula. 
\end{proof}

\begin{propo}[The basic cut-off inequality]\label{basic-cut-off}
	Let $(b,c)$ be a graph over $ (X,m)$. For  $\varphi \in C_c(X)$ and $u
	\in \mathcal{F}$, the inequality 
	$$ \mathcal{Q} (u\varphi )  \leq \sum_{x \in X} \varphi^2(x) \mathcal{L}u(x) u(x) m(x) +
	\frac{1}{2} \sum_{x,y \in X} b(x,y) u^2(x) (\nabla_{x,y}\varphi)^2$$
	holds. 
\end{propo}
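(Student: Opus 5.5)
The plan is to prove an exact identity for $\mathcal{Q}(u\varphi)$ and then estimate one cross term by the elementary inequality $2st\le s^2+t^2$. Since $\varphi\in C_c(X)$, the function $u\varphi$ lies in $C_c(X)\subset\mathcal{D}$, so $\mathcal{Q}(u\varphi)<\infty$. Also $u\varphi^2\in C_c(X)$. Hence Green's formula in its general form, applied with $u\varphi^2\in C_c(X)$ as test function and $u\in\mathcal{F}$, gives
$$\sum_{x\in X}\varphi^2(x)u(x)\mathcal{L}u(x)m(x)=\frac12\sum_{x,y\in X}b(x,y)\,\nabla_{x,y}(u\varphi^2)\,\nabla_{x,y}u+\sum_{x\in X}c(x)u^2(x)\varphi^2(x),$$
where all sums converge absolutely.

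The key algebraic step is a pointwise identity for each pair $x,y$. Write $a=u(x)$, $\beta=u(y)$, $p=\varphi(x)$, $q=\varphi(y)$. Expanding both sides shows
$$(ap-\beta q)^2-(ap^2-\beta q^2)(a-\beta)=a\beta(p-q)^2,$$
that is,
$$(\nabla_{x,y}(u\varphi))^2=\nabla_{x,y}(u\varphi^2)\,\nabla_{x,y}u+u(x)u(y)(\nabla_{x,y}\varphi)^2.$$

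Next we multiply this identity by $b(x,y)/2$ and sum over $x,y$, and we add the killing term $\sum_x c(x)u^2(x)\varphi^2(x)$. Comparing with Green's formula above gives
$$\mathcal{Q}(u\varphi)=\sum_{x}\varphi^2(x)\mathcal{L}u(x)u(x)m(x)+\frac12\sum_{x,y}b(x,y)u(x)u(y)(\nabla_{x,y}\varphi)^2 .$$
Finally we bound $u(x)u(y)\le\frac12(u^2(x)+u^2(y))$. Since $b$ and $(\nabla_{x,y}\varphi)^2$ are symmetric in $x$ and $y$, the two halves of this bound contribute equally, and the claimed inequality follows.

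The main obstacle is justifying the summation, since the last sum in the statement may diverge. Consider pairs with $x\notin\operatorname{supp}\varphi$ and $y\in\operatorname{supp}\varphi$. Summed over $x$, these contribute $\sum_x b(x,y)u^2(x)\varphi^2(y)$, which need not be finite. If the right-hand side is $+\infty$, the inequality holds trivially, so we may assume it is finite. The cross term $b(x,y)u(x)u(y)(\nabla_{x,y}\varphi)^2$ is absolutely summable, because every nonzero term has $x$ or $y$ in the finite set $\operatorname{supp}\varphi$. Indeed, for fixed $y$ in this set, $\sum_x b(x,y)|u(x)|<\infty$ holds since $u\in\mathcal{F}$, and symmetrically for fixed $x$. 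The other two series in the identity converge absolutely as well: $\sum b(\nabla(u\varphi))^2\le 2\mathcal{Q}(u\varphi)<\infty$, and the Green's-formula series converges by the result stated earlier. So splitting the sum of the pointwise identity into the separate sums is legitimate. The final estimate is then an inequality between series of nonnegative terms on the right.
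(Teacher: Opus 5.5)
Your proof is correct and is essentially the paper's argument. Both derive the exact identity $\mathcal{Q}(u\varphi)=\sum_{x}\varphi^2(x)\mathcal{L}u(x)u(x)m(x)+\frac12\sum_{x,y}b(x,y)u(x)u(y)(\nabla_{x,y}\varphi)^2$ and then apply $u(x)u(y)\le\frac12(u^2(x)+u^2(y))$ with the symmetry of $b$. The only difference is how the identity is obtained: the paper uses Green's formula for $u\varphi$ against itself, the product rule for $\mathcal{L}(u\varphi)$ and a symmetrization, whereas you apply Green's formula to the test function $u\varphi^2$ against $u$ and use a pointwise edge identity. Your remark that the last sum may be $+\infty$ (since $u\in\mathcal{F}$ controls $\sum_x b(x,y)|u(x)|$ but not $\sum_x b(x,y)u^2(x)$), and your justification for splitting the series, cover a detail the paper leaves implicit.
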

\begin{proof} We note that  $u \varphi$ belongs to $ C_c(X)$. Green's formula then gives
	$$\mathcal{Q}(u\varphi)=\sum_{x \in X} \mathcal{L}(u\varphi)(x) (u\varphi)(x)
	m(x).$$ 
	From the preceding lemma we get
	$$\mathcal{Q}(u\varphi)= \sum_{x\in X} (u\varphi)(x) m(x) \left(\varphi(x)\mathcal{L} u(x) + \frac{1}{m(x)} \sum_{y\in X} b(x,y) u(y)  \nabla_{x,y}\varphi\right).$$
	The  sums on the right hand side are absolutely
	convergent as $\varphi$ has finite support. Thus, simple manipulations give 
		$$
	\mathcal{Q}(u\ph)  = \sum_{x \in X} \ph^2(x) \mathcal{L}u(x) u(x)
	m(x) + \frac{1}{2} \sum_{x, y \in X} b(x,y) u(x)u(y)
	(\nabla_{x,y}\ph)^2.
	$$
	Now,
	we can use  the inequality $|u(x)u(y)|\leq \frac{1}{2}(u^2(x)+u^2(y)
	)$ and the  symmetry of $b$ to estimate the second term in the right
	hand side and  obtain the desired statement.
\end{proof}

\begin{thm}[Yau's Liouville theorem]
	Let $(b,c)$ be a connected graph over $ (X,m) $. If there exists an
	intrinsic metric $ \varrho $ such that the restriction of $ \mbox{Deg} $ to any ball with respect to $\varrho$  is bounded,  then every positive
	$ u \in \ell^{p}(X,m) \cap \mathcal F $ for some $ p\in (1,\infty) $ with 
	$$\mathcal{L} u \leq 0$$
	is constant.
\end{thm}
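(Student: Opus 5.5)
The plan is to run a cut-off argument with functions built from the intrinsic metric $\varrho$. This extends the proof idea of Proposition~\ref{basic-cut-off} from $p=2$ to general $p\in(1,\infty)$. Fix $o\in X$ and, for $R>0$, let
$$\eta_R := \Big(1-\tfrac{1}{R}\,\varrho(o,\cdot)\Big)_+ \wedge 1 \quad\text{or, better,}\quad \eta_R:=\Big(2-\tfrac1R\varrho(o,\cdot)\Big)_+\wedge 1 .$$
Then $\eta_R$ equals $1$ on $B_R(o)$, vanishes outside $B_{2R}(o)$, and is $\tfrac1R$-Lipschitz with respect to $\varrho$. Since $\varrho$ is intrinsic, for every $x$ we get
$$\tfrac12\sum_y b(x,y)(\nabla_{x,y}\eta_R)^2\le \tfrac{1}{R^2}\,m(x).$$
The balls need not be finite, so $\eta_R$ need not lie in $C_c(X)$. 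The first step is therefore to justify Green's formula for test functions of the form $\eta_R^2 u^{p-1}$. For this I use that $\mathrm{Deg}$ is bounded on $B_{2R}(o)$ and that $u\in\ell^p(X,m)$. Together they make the relevant double sums absolutely convergent: $\sum_{x\in B_{2R}}\sum_y b(x,y)\,u^{p-1}(x)u(y)$ is controlled via $\mathrm{Deg}\,m$ on the ball and Hölder's inequality. Alternatively, I approximate $\eta_R$ by finitely supported cut-offs $\eta_R 1_{K_n}$ and pass to the limit.

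Next, test $\mathcal L u\le 0$ against $\eta_R^2u^{p-1}\ge 0$, which gives
$$\tfrac12\sum_{x,y}b(x,y)\,\nabla_{x,y}(\eta_R^2u^{p-1})\,\nabla_{x,y}u\;\le\;\sum_x \eta_R^2u^{p-1}\mathcal Lu\, m\;\le 0 .$$
Then split $\nabla_{x,y}(\eta^2u^{p-1})$ by the product rule, as in the Lemma preceding Proposition~\ref{basic-cut-off}. One part is $\tfrac12(\eta^2(x)+\eta^2(y))\nabla(u^{p-1})$, the other is $\tfrac12(u^{p-1}(x)+u^{p-1}(y))\nabla(\eta^2)$. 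For the first part use the elementary inequality
$$(a^{p-1}-b^{p-1})(a-b)\ge c_p\,(a^{p/2}-b^{p/2})^2,\qquad a,b\ge0 .$$
For the second part use Cauchy--Schwarz with $\nabla\eta^2=(\eta(x)+\eta(y))\nabla\eta$, together with a weighted Young inequality and a bound like $|a^{p-1}-b^{p-1}|\cdot|a-b|$ versus $(a^{p/2}-b^{p/2})^2$. The aim is a Caccioppoli-type estimate
$$\sum_{x,y} b(x,y)\,\eta_R^2(x)\,\big(\nabla_{x,y}u^{p/2}\big)^2\;\le\; C_p\sum_{x,y}b(x,y)\,(\nabla_{x,y}\eta_R)^2\,\big(u^p(x)+u^p(y)\big).$$
By symmetry and the intrinsic bound, the right-hand side is at most $\tfrac{C_p'}{R^2}\|u\|_p^p$. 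For $p=2$ this is exactly Proposition~\ref{basic-cut-off} with $\varphi=\eta_R$, after dropping the killing term.

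Finally, let $R\to\infty$. Since $\eta_R\to 1$ pointwise, Fatou's lemma gives $\sum_{x,y}b(x,y)(\nabla_{x,y}u^{p/2})^2=0$. So $u^{p/2}$, and hence $u$, is constant along every edge, and connectedness yields that $u$ is constant.

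I expect the main obstacle to be the middle step for $p\neq2$: absorbing the mixed term into the left-hand side with the elementary inequalities requires care, especially for $1<p<2$, where $u^{p-1}$ is not Lipschitz in $u$ and the terms are best written with $u^{p/2}$ as the variable. The justification of Green's formula for non-finitely-supported $\eta_R$ is the secondary technical point. It is exactly where the boundedness of $\mathrm{Deg}$ on balls enters.
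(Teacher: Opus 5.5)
Your argument is correct and proves the theorem as stated. The paper only writes out the special case $p=2$ with finite $\varrho$-balls and defers the general case to \cite[Theorem~12.15]{KLW21}.

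In that special case $\eta\in C_c(X)$, so Green's formula is immediate. Proposition~\ref{basic-cut-off} then gives $\mathcal Q(u\eta)\le R^{-2}\|u\|_2^2$ directly, with no absorption and no elementary inequalities. So your remark that for $p=2$ your estimate is ``exactly'' that proposition is not literal: the proposition needs $\varphi\in C_c(X)$ and controls $\mathcal Q(u\varphi)$, not $\sum b\,\eta^2(x)(\nabla u)^2$.

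Your route, testing with $\eta_R^2u^{p-1}$ to get a Caccioppoli inequality for $u^{p/2}$, is what general $p$ and infinite balls require. It works for every $p>1$. For the mixed term, the inequality you need is not a comparison of $|s^{p-1}-t^{p-1}||s-t|$ with $(s^{p/2}-t^{p/2})^2$; that one only serves the main term. What you need is
\[
(s^{p-1}+t^{p-1})\,|s-t|\le C_p\,(s^{p/2}+t^{p/2})\,|s^{p/2}-t^{p/2}|,\qquad s,t\ge 0 .
\]
For $s>t$ and $\tau=t/s\in[0,1)$, this is boundedness of $(1+\tau^{p-1})(1-\tau)/(1-\tau^p)$, which equals $1$ at $\tau=0$ and tends to $2/p$ as $\tau\to 1$. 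Combine it with Young's inequality with a small parameter. So $1<p<2$ is no real obstacle.

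Two points need tightening.
\begin{enumerate}
\item The absorption step requires $\sum_{x,y}b(x,y)\eta_R^2(x)(\nabla_{x,y}u^{p/2})^2<\infty$.
\item Your H\"older justification of Green's formula is incomplete. Boundedness of $\mathrm{Deg}$ on $B_{2R}$ alone does not control $\sum_{x\in B_{2R}}b(x,y)$ for $y\notin B_{2R}$.
\end{enumerate}
Both are settled by the bound
\[
\sum_{x,y}b(x,y)\,\eta_R^2(x)\big(u^p(x)+u^p(y)\big)\le C\Big(\sup_{B_{2R}}\mathrm{Deg}+R^{-2}\Big)\|u\|_p^p .
\]
To get it, use $\eta_R(x)\le\eta_R(y)+|\nabla_{x,y}\eta_R|$. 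Edges leaving the support are then charged to the intrinsic estimate $\sum_y b(x,y)(\nabla_{x,y}\eta_R)^2\le 2R^{-2}m(x)$, and everything else to $\mathrm{Deg}$ on $B_{2R}$. Alternatively, use $\mathrm{Deg}$ on $B_{3R}$ together with $\varrho(x,y)\ge R$ for $y\notin B_{3R}$.

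With $u^{p-1}(x)u(y)\le u^p(x)+u^p(y)$, this bound dominates all the $b$-sums in Green's formula for $\eta_R^2u^{p-1}$ and also the term to be absorbed.

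In short, the paper's special-case argument is shorter and avoids absorption; yours gives the full statement.
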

\begin{proof}[Idea of the proof] Here, we discuss   a proof only for a special case. 
  Specifically, we consider $p =2$  and we make the stronger assumption that distance balls with respect to $\varrho$ are finite. {For the proof of the general statement, see \cite[Theorem 12.15]{KLW21}.} 

Fix  $o \in X$ and let  $ B_{s} =B_{s}(o) $ denote the closed ball of radius $s$ around $o$  with respect to the metric $\varrho$. For $ R,r>0 $, we define the  cut-off function $\eta \colon X\to [0,\infty)$ by
	$$
		\eta(x) = \eta_{r,R}(x)=\left(1-\frac{\varrho(x,B_{r})}{R}\right)_{+},
	$$
where $ a_{+}=0\wedge a $ for $ a\in \mathbb{R} $. Then,
$$1_{ B_r}  \leq \eta \leq 1_{B_{R+r}}$$
holds. Moreover,  by definition $\eta$ is a $1/R$-Lipschitz function, i.e., it  satisfies
$$|\eta(x) -\eta(y)|\leq \frac{1}{R} \varrho(x,y).$$
As $\varrho$ is intrinsic, this implies
$$ \frac{1}{2}\sum_{y \in X} b(x,y)(\nabla_{x,y}\eta)^2 \leq\frac{1}{R^2}m(x)$$
for $x \in X$. Then, from 	 $\eta = 1$ on $B_r$,  the basic cut-off inequality in the previous proposition and $\mathcal{L} u\leq 0$ and the preceding estimate  we infer

\begin{multline*} \frac{1}{2} \sum_{x,y\in B_r} b(x,y) (u(x)- u(y))^2  +\sum_{x\in B_r} c(x) u(x)^2 
\leq   Q (u\eta) \\
  \leq  \sum_{x \in X} \eta^2(x) \mathcal{L}u(x) u(x) m(x) +
	\frac{1}{2} \sum_{x,y \in X} b(x,y) u^2(x) (\nabla_{x,y}\eta)^2\\
		\leq  \frac{1}{2} \sum_{x,y \in X} b(x,y) u^2(x) (\nabla_{x,y}\eta)^2
		\leq  \frac{1}{R^2} \|u\|_2^2.
		\end{multline*}
		Taking the limit $R\to \infty$, we infer 
		$$	\frac{1}{2} \sum_{x,y\in B_r} b(x,y) (u(x)- u(y))^2  +\sum_{x\in B_r} c(x) u(x)^2 =0.$$
				As this holds for each $r>0$ and the graph is connected we infer  constancy of $u$. 	
\end{proof}

For a harmonic function $u$, its positive and its negative part are positive subharmonic functions. Thus, we immediately derive the following corollary

\begin{corol}[Yau's Liouville theorem for harmonic functions]
	Let $ (b,c) $ be a connected graph over $ (X,m) $. If there exists 	an intrinsic metric $ \varrho $ such that the restriction of 
	$ \Deg $ to any ball with respect to $\varrho$ is bounded,  then every 	harmonic function   $ u\in\ell^{p}(X,m) $  for some $ p\in
	(1,\infty) $ is constant.
\end{corol}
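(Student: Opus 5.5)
The plan is to reduce the corollary to Yau's Liouville theorem for positive subharmonic functions, as the text preceding the statement suggests. Let $u\in\ell^p(X,m)$ be harmonic, i.e.\ $u\in\mathcal F$ and $\mathcal L u=0$. Set $u_+ = u\vee 0$ and $u_-=(-u)\vee 0$, so that $u=u_+-u_-$.

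First, both parts satisfy the hypotheses of Yau's theorem. Since $|u_\pm|\le |u|$, we have $u_\pm\in\ell^p(X,m)$ and $u_\pm\in\mathcal F$, because $\mathcal F$ is defined by the absolute summability condition $\sum_y b(x,y)|f(y)|<\infty$.

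Next, we show $\mathcal L u_+\le 0$, checked pointwise.
\begin{itemize}
\item At $x$ with $u(x)\le 0$ we have $u_+(x)=0$, hence
\[
\mathcal L u_+(x) = -\frac{1}{m(x)}\sum_y b(x,y)u_+(y)\le 0 .
\]
\item At $x$ with $u(x)>0$ we have $u_+(x)=u(x)$ and $u_+(y)\ge u(y)$ for all $y$, so each term $u_+(x)-u_+(y)\le u(x)-u(y)$. Since the killing term is unchanged,
\[
\mathcal L u_+(x)\le \mathcal L u(x)=0 .
\]
\end{itemize}
The same argument applied to the harmonic function $-u$ gives $\mathcal L u_-\le 0$. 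This is the step needing the most care, although it is elementary.

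Yau's theorem then yields that $u_+$ and $u_-$ are constant. Here ``positive'' in that theorem should mean nonnegative. If it were meant strictly, I would instead apply it to $u_\pm+\varepsilon$. But constants are not in $\ell^p$ unless $m(X)<\infty$, so the nonnegative reading is the natural one, and the proof given there indeed only uses $u\ge 0$. Hence $u=u_+-u_-$ is constant, as claimed.
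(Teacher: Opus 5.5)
Your proof is correct and follows the paper's argument: the paper also observes that the positive and negative parts of a harmonic function are nonnegative subharmonic functions, and applies Yau's Liouville theorem to each of them. You add the pointwise check of $\mathcal L u_+\le 0$ and the reading of ``positive'' as nonnegative (which is all the cut-off argument uses); both are details the paper leaves implicit.
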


We finish this section with a Liouville theorem that requires a stronger geometric assumption but then assume less on the subharmonic function.  An  intrinsic metric $\varrho$ is
said to have \textit{finite jump size} if 
$$ \sup\{\varrho(x,y) \mid x,y \in X \text{ with } x\sim y\}<\infty.$$

\begin{thm}[Karp's Liouville theorem]\label{Cacc:t:Karp}\index{theorem!Karp's}\index{theorem!Liouville}
	Let $(b,c)$ be a connected graph over $ (X,m) $. Suppose that there exists an intrinsic metric $ \varrho $ with finite jump size   such that the restriction of 	$ \Deg $ to any ball with respect to $\varrho$ is bounded and let $B_r$ be the ball around a fixed $o\in X$.  Then, a positive $ u \in \mathcal F$ with $\mathcal L u \leq 0$  is constant if
	\begin{align*}
		\int_{r_{0}}^{\infty}\frac{r}{\| u1_{B_{r} }\|_{p}^{p}}dr =\infty
	\end{align*}
	for some $p\in (1,\infty)$ and  some  $r_0\geq 0$ with $u 1_{B_{r_0}} \neq 0$.
\end{thm}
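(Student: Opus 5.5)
We follow Karp's original strategy: a Caccioppoli-type inequality for $u^{p/2}$, followed by a dyadic summation argument. As in the discussion of Yau's theorem, we first treat $p=2$ in detail and then indicate the changes for general $p$.

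\textbf{Step 1: the cut-off estimate.} Fix $o$ and write $B_s=B_s(o)$. For $r<R$ take the Lipschitz cut-off
$$\eta(x)=\left(1-\frac{\varrho(x,B_r)}{R-r}\right)_+ ,$$
which satisfies $1_{B_r}\le \eta\le 1_{B_R}$ and is $1/(R-r)$-Lipschitz. Let $s:=\sup\{\varrho(x,y)\mid x\sim y\}$ be the jump size. Since $s<\infty$, $\nabla_{x,y}\eta\neq 0$ with $b(x,y)>0$ forces $x$ to lie in the annulus $A:=B_{R+s}\setminus B_{r-s}$. Intrinsicness then gives
$$\frac12\sum_y b(x,y)(\nabla_{x,y}\eta)^2\le \frac{m(x)}{(R-r)^2}\,1_A(x).$$
Insert this into the basic cut-off inequality (Proposition~\ref{basic-cut-off}) together with $\mathcal L u\le 0$, $u\ge 0$. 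If balls are not finite, $\eta$ need not lie in $C_c(X)$; in that case we truncate further by $1_{\{\cdot\}}$ on finite exhausting sets and use the boundedness of $\Deg$ on balls to pass to the limit, exactly as in the general proof of Yau's theorem. The outcome is
$$E(r):=\frac12\sum_{x,y\in B_r}b(x,y)(u(x)-u(y))^2+\sum_{B_r}c\,u^2\;\le\;\frac{C}{(R-r)^2}\,\big(V(R+s)-V(r-s)\big),\qquad V(t):=\|u1_{B_t}\|_2^2 .$$

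\textbf{Step 2: sharpening to a difference quotient.} The bound in Step 1 is too crude, because the integral condition must be played off against energy in annuli, not against total mass. The standard fix is to replace $\eta$ by $u\eta$ in a self-improving way. Keeping the cross term
$\sum b\,u(x)u(y)(\nabla\eta)^2$ and applying Cauchy--Schwarz with the energy $F(R)-F(r)$ on the annulus, where $F(t)$ is the energy inside $B_t$, should yield
$$F(r)\le \frac{C}{R-r}\,\big(F(R+s)-F(r-s)\big)^{1/2}\big(V(R+s)-V(r-s)\big)^{1/2}.$$
Suppose $u$ is not constant. Then $F(r_1)>0$ for some $r_1\ge r_0$, so $F(r)\ge F(r_1)>0$ for all $r\ge r_1$. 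Squaring the inequality and using $V(R+s)-V(r-s)\le V(R+s)$ gives
$$\frac{(R-r)^2}{V(R+s)}\;\le\; C\left(\frac1{F(r-s)}-\frac1{F(R+s)}\right)\cdot(\text{const}),$$
because $(F_2-F_1)/F_1^2\ge 1/F_1-1/F_2$.

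\textbf{Step 3: telescoping.} Choose radii $r_k$ increasing by a fixed step $\delta>2s$, so that the annuli $[r_k-s,\,r_{k+1}+s]$ overlap only boundedly often. Summing over $k$ makes the right-hand side telescope to something bounded by $C/F(r_1)<\infty$. The left-hand side dominates a Riemann-sum lower bound for $\int_{r_0}^\infty \frac{r}{V(r)}\,dr$. Here one uses that $V$ is monotone, and groups radii geometrically ($r_k\sim 2^k$) so that $(R-r)^2/V(R)$ is comparable to $\int_r^R t/V(t)\,dt$. Since that integral is infinite, we get a contradiction, so $F\equiv 0$ and connectedness yields constancy of $u$.

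\textbf{General $p$.} For $p\neq 2$ the same scheme is applied with test function $\eta^2u^{p-1}$ in place of $\eta^2 u$. The needed pointwise discrete inequalities are
$$(a-b)(a^{p-1}-b^{p-1})\ge c_p\,(a^{p/2}-b^{p/2})^2,\qquad |a^{p-1}-b^{p-1}|\lesssim (a^{p/2}-b^{p/2})(a^{p/2-1}+b^{p/2-1}).$$
These reduce the argument to energy estimates for $u^{p/2}$ and give the $\|u1_{B_r}\|_p^p$ version.

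\textbf{Main obstacle.} I expect Step 2 to be the hardest part: obtaining the Cauchy--Schwarz form with annular energy in the discrete setting, including the $p$-dependent inequalities. The nonlocal cross terms $u(x)u(y)$ straddling the annulus boundary must be controlled, and this is exactly where the finite jump size is essential.
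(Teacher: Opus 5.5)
The paper gives no proof of this theorem; it only quotes it from \cite{HK14}, where Karp's argument is carried out. Your overall architecture is the right one: an annular Caccioppoli (Karp-type) inequality, then summation over radii. However, the passage from Step 2 to Step 3 fails as written. Squaring your Karp-type inequality gives
\[
\frac{(R-r)^2}{V(R+s)}\le C\,\frac{F_2-F_1}{F(r)^2}\le C\,\frac{F_2-F_1}{F_1^2}=C\,\frac{F_2}{F_1}\Bigl(\frac1{F_1}-\frac1{F_2}\Bigr),\qquad F_1=F(r-s),\ F_2=F(R+s).
\]
The inequality $(F_2-F_1)/F_1^2\ge 1/F_1-1/F_2$ that you invoke is true, but it points the wrong way. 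You need an \emph{upper} bound by $1/F_1-1/F_2$, and that requires $F_2/F_1$ to stay bounded. Nothing in your argument guarantees this; in the discrete setting $F$ is a jump function whose values can increase by large factors between consecutive radii. For instance, if $F(2^k)\approx\exp(e^k)$, the terms $(F_{k+1}-F_k)/F_k^2$ do not even tend to $0$. So the telescoping in Step 3 has no basis.

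The missing idea is that the Step 1 estimate, which you set aside as too crude, is needed as a second ingredient. It gives $(R-r)^2/V(R+s)\le C/F(r)\le C/F_1$. Split into two cases:
\begin{enumerate}
\item If $F_2>2F_1$, then $1/F_1\le 2(1/F_1-1/F_2)$, so Step 1 yields $(R-r)^2/V(R+s)\le 2C(1/F_1-1/F_2)$.
\item If $F_2\le 2F_1$, the squared Karp inequality yields the same bound, because then $F_2/F_1\le 2$.
\end{enumerate}
With this combined estimate Step 3 goes through, but only with geometric radii $r_k=2^k$ for $k$ large. Then $2^k>2s$, and the enlarged annuli $[2^k-s,2^{k+1}+s]$ overlap at most twice. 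With a fixed step $\delta$ the argument breaks: the left side $\delta^2/V$ is not comparable to $\int_{r_k}^{r_{k+1}}t/V(t)\,dt\approx\delta r_k/V$.

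A smaller point concerns how you obtain the annular inequality of Step 2. It does not come from keeping the cross term $\sum b\,u(x)u(y)(\nabla_{x,y}\eta)^2$, which contains no $\nabla u$ and so cannot be paired with annular energy via Cauchy--Schwarz. Instead, test $\mathcal L u\le 0$ against $\eta^2u$ and write
\[
\nabla_{x,y}(\eta^2u)=\tfrac{\eta^2(x)+\eta^2(y)}2\nabla_{x,y}u+\tfrac{u(x)+u(y)}2\nabla_{x,y}\eta^2 .
\]
Then apply Cauchy--Schwarz to the mixed term. By the jump-size bound, that term only involves edges with both endpoints in $B_{R+s}\setminus B_{r-s}$, which is exactly where the finite jump size enters.
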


\begin{rem} Yau's theorem was  first proven by Yau for manifolds in \cite{Yau76}. It is a  corollary of Karp's theorem shown in \cite{Karp82}.  This was later extended to strongly local Dirichlet forms by Sturm in \cite{Stu94}. For graphs, the general statements given here are taken from  Hua/Keller \cite{HK14} improving upon earlier  results from \cite{HS97, RSV97, Mas09, HJ14}. Recently, an extension to general regular Dirichlet forms was established in \cite{HKLS22}.
\end{rem}

\section{Stochastic completeness or the case $p =\infty$} \label{sec-Grigoryan}
In this section we study subharmonic functions in $\ell^p$ for $p =\infty$. 
This is strongly linked to a property of the heat equation known a stochastic completeness.
Specifically, a graph  $(b,0)$ over $(X,m)$ is said to be \textit{stochastically complete} if for any $f\geq 0$ with $f\in \ell^1 (X,m)\cap \ell^2 (X,m)$ the quantity   
$$\sum_{x\in X} e^{- t L } f (x) m(x) $$
does not depend on $t\geq 0$. If one thinks of $e^{-t L} f$ as the distribution of heat on $X$ at time $t$ for an initial distribution given by $f$, then stochastic completeness just means that the total amount of heat is conserved (and this explains the name). It turns out that one can extend the operators $e^{-t L}$  from $\ell^2 (X,m)$ to $\ell^\infty (X)$ and with this extension denoted by $e^{-t L}$ again we can formulate stochastic completeness briefly as
$$e^{-tL} 1 = 1$$
for all $t\geq 0$ {which relates to the definition of stochastic completeness given above via the calculation
\begin{multline*}
\sum_{x\in X} e^{- t L } f (x) m(x) = \sum_{x\in X} e^{- t L } f(x) 1 (x) m(x) =\langle e^{-tL}f,1\rangle_{1,\infty}\\= \langle f, e^{-tL}1\rangle_{1,\infty}
=\langle f,1\rangle_{1,\infty}=\sum_{x\in X} f(x) m(x),
\end{multline*}
where $ \langle\cdot,\cdot\rangle_{1,\infty} $ denotes the dual pairing between $ \ell^1(X,m) $ and $ \ell^\infty(X) $ and $ e^{-tL}  $ on $ \ell^{1}$ is the adjoint of $ e^{-tL} $ on $ \ell^{\infty} $.
} Now, the connection to Liouville theorems comes from the following.

\begin{thm}[Stochastic completeness and Liouville property in $\ell^\infty$] Let $b$ be a graph over $(X,m)${, i.e., $ c=0 $}. Then, the following assertions are equivalent:
\begin{enumerate}[(i)]
\item  The graph is stochastically complete.
\item  For any $f\in \ell^\infty (X)$ there exists a unique bounded solution $u$ of the heat equation
$$(\mathcal{L} + \partial_t) u =0$$
with initial condition $f$. 
\item  Any nonnegative $u\in \ell^\infty (X)$ with $(\mathcal{L} +\alpha) u\leq 0$ for some $\alpha >0$ is trivial.
\end{enumerate}
\end{thm}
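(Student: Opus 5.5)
The plan is to run everything through the function
$$w_\alpha := 1 - \alpha G_\alpha 1, \qquad \alpha >0,$$
and, in time, through $w_t := 1 - e^{-tL}1$, where $L = L^{(D)}$ and $G_\alpha = (L+\alpha)^{-1}$ are extended to $\ell^\infty(X)$ by monotone limits. Concretely, $e^{-tL}1 = \sup_n e^{-tL}1_{K_n}$ for an exhaustion $K_n\uparrow X$ by finite sets. The Markov property (characterization of Dirichlet forms) gives $0\le e^{-tL}1\le 1$ and $0\le \alpha G_\alpha 1\le 1$. The key input is the \emph{minimality} of these extensions. I would prove it by approximating with Dirichlet problems on finite sets $K_n$, using the restricted graph with killing term, whose forms increase to $Q^{(D)}$; a maximum principle on finite sets then shows the following. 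If $v\ge0$ is bounded and $(\mathcal L+\alpha)v\ge g\ge 0$, then $v\ge G_\alpha g$. Likewise, $e^{-tL}f$ is the smallest nonnegative solution of the heat equation with initial datum $f\ge0$. One also checks that $G_\alpha 1$ solves $(\mathcal L+\alpha)G_\alpha 1 = 1$ pointwise, which holds since $\mathcal L$ commutes with monotone limits on bounded functions, because $\sum_y b(x,y)<\infty$. Hence $w_\alpha\in[0,1]$ satisfies $(\mathcal L+\alpha)w_\alpha = 0$.

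For (iii)$\Rightarrow$(i), note that $w_\alpha$ is a nonnegative bounded $\alpha$-harmonic function, so (iii) gives $w_\alpha = 0$, i.e., $\alpha G_\alpha 1 = 1$. Via the Laplace transform $G_\alpha 1 = \int_0^\infty e^{-\alpha t}e^{-tL}1\,dt$ together with $e^{-tL}1\le1$, this forces $e^{-tL}1=1$ for a.e. $t$, hence for all $t$ by monotone continuity in $t$. For (i)$\Rightarrow$(iii), let $0\le u\le C$ with $(\mathcal L+\alpha)u\le 0$. Then $v:=C-u\ge0$ satisfies $(\mathcal L+\alpha)v\ge \alpha C$, so minimality gives $v\ge \alpha C G_\alpha 1 = C$, where the last equality uses stochastic completeness and the Laplace transform. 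Thus $u\le 0$, so $u=0$.

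For (ii), existence holds in general: split $f = f_+ - f_-$ and use $e^{-tL}f_\pm$. These are bounded heat solutions, and pointwise differentiability follows by the same monotone-limit argument. For (ii)$\Rightarrow$(i), observe that $u_t = 1$ and $u_t=e^{-tL}1$ both solve the heat equation with initial condition $1$ (here $c=0$ is used so that constants solve it), so uniqueness gives $e^{-tL}1=1$. For (i)$\Rightarrow$(ii), let $u$ be a bounded solution with $u_0=0$. Set $v(\alpha) := \int_0^\infty e^{-\alpha t}u_t\,dt$, which is bounded by $\sup|u|/\alpha$. Integrating by parts in $t$ and interchanging $\mathcal L$ with the integral (justified by boundedness and $\sum_y b(x,y)<\infty$ via dominated convergence) gives $(\mathcal L+\alpha)v = 0$. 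Applying (iii), which is already shown equivalent to (i), to $v_\pm$ requires subharmonicity of $v_\pm$. This holds because for an $\alpha$-harmonic $v$ the function $v_+$ satisfies $(\mathcal L+\alpha)v_+\le0$: at points with $v(x)>0$ we have $\mathcal L v_+(x)\le \mathcal L v(x)$, and at points with $v(x)\le0$ the inequality is trivial. So $v=0$ for all $\alpha>0$, and uniqueness of the Laplace transform yields $u=0$.

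The main obstacle is the minimality step: showing that the $\ell^\infty$-extensions of $G_\alpha$ and $e^{-tL}$ agree with the increasing limits of the finite-set Dirichlet resolvents and semigroups, and that these limits are the minimal nonnegative (super)solutions. I would handle it by first using form monotone convergence for the restricted forms $Q_{K_n}\downarrow$ in the sense of resolvents, i.e. resolvents increasing to those of $Q^{(D)}$. On each finite $K_n$, the minimum principle for $(\mathcal L_{K_n}+\alpha)$ applied to $v - G^{(n)}_\alpha g$ is a finite-dimensional argument at a minimizing vertex, exactly like the path argument in the proof for infinite-mass paths.
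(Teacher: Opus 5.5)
Your argument is correct. The paper gives no proof of this theorem and only quotes it from \cite{KL}; your route is essentially the standard argument for it: exhaustion by finite sets carrying the induced killing term, the finite-set minimum principle giving minimality of the monotonically extended resolvent, the bounded $\alpha$-harmonic function $1-\alpha G_\alpha 1$, and the Laplace transform linking $e^{-tL}1$ with $\alpha G_\alpha 1$ and reducing heat-equation uniqueness to (iii).

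One step is worded too loosely: pointwise differentiability of $t\mapsto e^{-tL}f(x)$ for $f\in\ell^\infty(X)$ does not pass through monotone limits directly. Instead, pass to the limit in the integrated identity $u_t(x)=f(x)-\int_0^t\mathcal L u_s(x)\,ds$, using dominated convergence and $\sum_y b(x,y)<\infty$. Differentiability then follows from the resulting Lipschitz continuity of $s\mapsto u_s(y)$, which makes $s\mapsto\mathcal L u_s(x)$ continuous.
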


 \begin{rem} There is a long history to the  study of stochastic completeness both in the continuous setting, see, e.g., the survey of Grigor'yan \cite{Gri99}, and the discrete setting, see, e.g.,  the early work of Feller \cite{Fel57} and Reuter \cite{Reu57}. Often the term \textit{conservativness} is used instead of stochastic completeness.  In the specific case of graphs with standard weights and counting measure, the previous theorem  (and more) is worked out in \cite{Woj08}. In the generality stated here the result is due to  \cite{KL}, where also an extension called ``stochastic completeness at infinity'' is discussed to deal with graphs with non-vanishing $c$. 
 \end{rem}

In order to be able to deal with distance balls of small measure we define 
$ \log^{\#} (t) :=\log (t) \vee 1$ for $t>0$.

\begin{thm}[Grigor'yan theorem]
Let $ b $ be a graph over $ (X,m) $ and let $ \varrho $ be an intrinsic metric all of whose distance balls are finite. If
	\begin{align*}
		\int_{0}^{\infty}\frac{r}{\log^{\#}( m(B_{r}))}dr=\infty,
	\end{align*}
	then the graph is stochastically complete.
\end{thm}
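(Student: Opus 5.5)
We prove the criterion through the Liouville characterization of stochastic completeness stated above, assertion (iii). We fix $\alpha>0$ and a nonnegative $u\in\ell^\infty(X)$ with $(\mathcal L+\alpha)u\le 0$, and show $u=0$. The tool is a Grigor'yan-type weighted integral estimate. Let $\eta$ be a Lipschitz cut-off with respect to $\varrho$ and $\xi$ an exponential weight depending on $\varrho(o,\cdot)$. We test the subsolution inequality against $\eta^2 e^{\xi}u$ and expand via Green's formula; this is legitimate because $\eta$ has finite support, since balls are finite. The aim is an inequality of the form
$$\alpha\sum_{B_r} u^2 m \;\lesssim\; \frac{1}{(R-r)^2}\sum_{B_R\setminus B_r} u^2 m ,$$
or, better, its exponentially weighted refinement, in which the right-hand side is controlled by $\|u\|_\infty^2\, m(B_R)\, e^{-c\,(R-r)^2/\ldots}$.

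\textbf{Step 1: Caccioppoli-type inequality with weight.} Use the basic cut-off inequality (Proposition~\ref{basic-cut-off}), generalized to $\varphi=\eta e^{\xi/2}$. This gives
$$\alpha\sum \varphi^2 u^2 m \le \tfrac12\sum_{x,y} b(x,y)\,u(x)^2(\nabla_{x,y}\varphi)^2 .$$
Since $\varrho$ is intrinsic, $\tfrac12\sum_y b(x,y)(\nabla_{x,y}\varphi)^2\le \|\varphi\|_{\mathrm{Lip}}^2\,\cdot$ (a local factor) $\cdot\, m(x)$. A nonlocal difficulty arises because $\varrho$ need not have finite jump size. The way around it is to bound $(e^{a}-e^{b})^2$ using the elementary inequality $(e^{s/2}-e^{t/2})^2\le \tfrac14(s-t)^2(e^{s}+e^{t})$ and then symmetrize.

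\textbf{Step 2: choose radii and iterate.} Pick radii $r_k\to\infty$ adapted to the volume growth, with $(r_{k+1}-r_k)^2\approx r_k^2/\log^{\#} m(B_{r_k})$ up to constants. The divergence of $\int r/\log^{\#}m(B_r)\,dr$ is precisely what makes $\sum_k (r_{k+1}-r_k)^2/r_k^{\,?}$ diverge. We then derive a recursive inequality
$$I(r_k)\le \frac{C}{\alpha (r_{k+1}-r_k)^2} I(r_{k+1}),\qquad I(r)=\sum_{B_r}u^2m .$$
Alternatively, following Grigor'yan, we use an integrated maximum principle with a Gaussian weight $e^{-\varrho^2/(Ct)}$ on the heat equation solution $1-e^{-tL}1$. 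The trivial bound $I(r)\le \|u\|_\infty^2 m(B_r)$ then lets the iterated product of factors $e^{-c(r_{k+1}-r_k)^2}$ beat the growth $m(B_{r})\le e^{\log^{\#}m(B_r)}$. This forces $I(r_0)=0$.

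\textbf{Main obstacle.} The hard part is matching the discrete weight estimate to the volume condition. Without finite jump size, $e^{\xi}$ can vary wildly across a single edge, so the pointwise chain rule fails. I would handle this by working with the Gaussian-weighted heat-equation version: for $v_t=1-e^{-tL}1$ restricted to balls (finite by assumption, so Dirichlet restrictions make sense), show that
$$E(t)=\sum v_t^2 e^{\xi_t} m, \qquad \xi_t(x)=\frac{\varrho(x,B_r)^2}{\,2(t-s)},$$
is nonincreasing. The step that needs care is the edgewise estimate $|\nabla \xi|^2$ versus $\partial_t\xi$. If that fails, I would truncate $\varrho$ at scale $1$, i.e. replace it by $\varrho\wedge 1$, which is still intrinsic and has jump size at most $1$. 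This changes the balls only within distance one and does not affect the divergence of the integral. With the monotonicity in hand, the standard Grigor'yan summation over dyadic-type radii concludes.
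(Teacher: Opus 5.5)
The survey states this theorem without proof and refers to \cite{HKS20}, whose argument is of Grigor'yan's parabolic type (a uniqueness-class estimate for the heat equation). That is the route you sketch in your last paragraph. As it stands, your proposal does not prove the statement; there are two concrete gaps.

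\textbf{The volume condition is never actually used.} Your elliptic recursion $I(r_k)\le \frac{C}{\alpha\Delta_k^2}I(r_{k+1})$, with $\Delta_k=r_{k+1}-r_k$, produces only polynomial factors. Iterating it against $I(r_n)\le\|u\|_\infty^2m(B_{r_n})$ gives $I(r_0)\le\|u\|_\infty^2m(B_{r_n})\exp\bigl(-\sum_{k<n}\log\frac{\alpha\Delta_k^2}{C}\bigr)$. Since $\log x\le x/e$, the exponent is at most $\frac{2}{e}\sqrt{\alpha/C}\,r_n$. So this handles only $\log m(B_r)=O(r)$, whereas the hypothesis allows e.g.\ $\log m(B_r)=r^2\log r$. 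The exponent you left as ``?'' reflects that the role of the integral condition has not been found.

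In Grigor'yan's argument the condition enters through \emph{time}. Take dyadic radii and time steps $\tau_k\asymp\Delta_k^2/\log^{\#}m(B_{r_{k+1}})$. Then divergence of $\int^\infty r/\log^{\#}m(B_r)\,dr$ is equivalent to $\sum_k\tau_k=\infty$. This lets you go from any time $T$ back to time $0$ (where $v_0=0$) in finitely many steps, with the Gaussian factor $e^{-c\Delta_k^2/\tau_k}$ beating $m(B_{r_{k+1}})$ at each step. What must be proved is a localized inequality with a cut-off $\eta$ ($\eta=1$ on $B_r$, ${\rm supp}\,\eta\subset B_R$) of the type
\[
\sum_{B_r}v_T^2\,m\le\sum_{B_R}v_{T-\tau}^2\,m+\frac{C}{(R-r)^2}\,e^{-(R-r)^2/(C\tau)}\int_{T-\tau}^{T}\sum_{B_R}v_t^2\,m\,dt .
\]
It is not monotonicity of the global quantity $E(t)=\sum v_t^2e^{\xi_t}m$. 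Under the hypothesis $E(t)$ need not be finite. Where it is finite, monotonicity together with $E(0)=0$ would give $v\equiv0$ at once, so it is essentially the claim itself. For the Dirichlet restrictions it is simply false: $1_{B_n}-e^{-tL_{B_n}}1_{B_n}$ is a supersolution with source $\frac1m\sum_{y\notin B_n}b(\cdot,y)\ge0$ starting from $0$, so its weighted norm increases.

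\textbf{The nonlocal step is unresolved, and your fallback is wrong.} The metric $\varrho\wedge1$ is intrinsic with jumps $\le1$, but every ball of radius $\ge1$ is all of $X$. So balls are no longer finite, and $m(B_r)=m(X)$ for $r\ge1$, which destroys the integral condition when $m(X)=\infty$.

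A correct reduction keeps $\varrho$ and deletes the long edges. Set $b_S:=b\,1_{\{\varrho\le S\}}$; it has the same balls, jump size $\le S$, and $\sum_y(b-b_S)(x,y)\le 2m(x)/S^2=:\varepsilon m(x)$. Suppose $0\le u\le M:=\sup u$ and $(\mathcal L_b+\alpha)u\le0$. Then $\mathcal L_{b-b_S}u\ge-\varepsilon(M-u)$. Hence $w:=u-\frac{\varepsilon M}{\alpha+\varepsilon}$, and therefore also $w_+$, satisfies $(\mathcal L_{b_S}+\alpha+\varepsilon)w_+\le0$. Stochastic completeness of $b_S$ then gives $M\le\frac{\varepsilon M}{\alpha+\varepsilon}$, i.e.\ $u=0$.

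Bounded jumps do not, however, save the naive weighted estimate. Your inequality $(e^{s/2}-e^{t/2})^2\le\frac14(s-t)^2(e^s+e^t)$ puts the weight at the other endpoint of the edge. Absorbing that term into $-\partial_t\xi\,e^{\xi}$ therefore costs a factor $e^{\xi(y)-\xi(x)}$. At the scales of the iteration, the Gaussian weight changes across an edge of length $S$ by about $S\,\varrho(x,B_r)/\tau\approx S\log m(B_R)/R$ (e.g.\ $SR\log R$), which is unbounded under the hypothesis. This is why straightforward discrete versions yield only weaker growth conditions (compare the non-optimal result of \cite{GHM12} mentioned in the paper). Overcoming it is exactly what \cite{Fol14,Hua14b,HS14} and \cite{HKS20} do, and your proposal has no idea for this step.
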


\begin{rem} The results of this section are analogues to   results on  stochastic completeness of Riemannian manifolds going back to work of Gaffney \cite{Gaf59}, Karp/Li \cite{KL} and culminating in in Grigor'yan's volume growth result in \cite{Gri86}, see also the survey
\cite{Gri99}.  Other approaches to volume growth criteria for manifolds can be found in \cite{Dav92, Tak89}. An  extension of Grigor'yan's result to strongly local Dirichlet forms is established in \cite{Stu94}.

For graphs, the volume growth criterion of Grigor'yan fails badly when one uses the standard graph metric, see \cite{Woj08, Woj11}.  This  spurred the use of  intrinsic type  metrics to the graph
setting to find  analogues for the result of Grigo'yan as done first by  
Grigor'yan/Huang/Masamune for jump processes in \cite{GHM12}; however, their result did not yield the optimal
volume growth bound on graphs.  
This optimal result was  achieved by Folz \cite{Fol14}  who used probabilistic techniques to compare the discrete graph to a continuum object (quantum graph) and then appealed to  Sturm's extension of Grigor'yan's result.  
Huang \cite{Hua14b} achieved similar results using quantum graphs
and analytic techniques while Huang/Shiozawa \cite{HS14} gave a probabilistic proof using refinements instead of quantum graphs.   Our result here is taken from  Huang/Keller/Schmidt \cite{HKS20}  which gives a purely analytic proof of Grigor'yan's volume growth  and removes all restrictions on the graph structure found in the previous results on graphs in \cite{Fol14, Hua14b, HS14}.
\end{rem}

\part{Recurrence and (metric) boundaries}
Recurrence is a core concept and can be  phrased in many different ways. A prominent way  involving the Laplacian is by absence of superharmonic functions.  The word  recurrence  itself comes from a formulation via  Markov processes. Markov processes model the behavior of particles moving on a  given set according to some basic assumptions. These assumptions make sure that the future movement of a particle does not depend on its past movement but only on its present position.  Recurrence then
describes the phenomenon  that the particle comes back 
again and again. In the geometric-analytic description, which is our
concern here, this coming back  is encoded by various forms of  irrelevance of
what is happening far away. So, recurrence is very much a property determined at  infinity in the sense that infinity does not play a role for it.  

Below we first define recurrence in a way that makes this irrelevance of infinity apparent (Section~\ref{sec-basics-recurrence}). We then turn to a quantitative way of phrasing this. There, the concept of infinity is made precise via boundaries of dense embeddings of the graph and the concept of irrelevance is made precise by vanishing capacity (Section \ref{sec-boundary-capacity}).  Finally, we then present our main result (Section \ref{sec-main-recurrence}).  The material  in Section~\ref{sec-basics-recurrence} is standard, see e.g. \cite{KLW21} for further discussion, references and history {or \cite{Woe} for a more probabilistic point of view}. The material in the subsequent sections of this part is taken from  the article \cite{LPS23} if not stated otherwise.

\section{The definition of recurrence}\label{sec-basics-recurrence}
In this section we will deal with connected graphs only. Indeed, from our interpretation of the particle coming back again and again it makes sense to make this assumption.

\begin{defi}[Recurrence]
A connected   graph $(b,c)$ over $X$ is called \emph{recurrent} if there  exists a sequence $(\varphi_n)$ in $C_c (X)$ 
 with $\varphi_n \to 1$
pointwise and $\mathcal{Q}(\varphi_n)  \to 0$, as  $n\to \infty$.
\end{defi}

By lower semicontinuity of  $\mathcal{Q}$ recurrence of the graph implies  $\mathcal{Q}(1) =0$ and this gives $c =0$. This is well in line with the idea that the particle comes back again and again, which is  incompatible with the presence of a killing term $c$, which removes the random walker from the graph with a certain probability. So from now on we will only consider graphs with $c =0$ in the context of recurrence. 

 We note that the definition of recurrence  can then be rephrased as existence of {a so called \emph{null sequence}, which is }a sequence $(\varphi_n)$ in $C_c (X)$ converging to $1$ pointwise and with respect to $\mathcal{Q}$, with the latter meaning
$$\mathcal{Q}(1-\varphi_n)  =\mathcal{Q}(\varphi_n) \to 0,\quad n\to \infty.$$
  This can be seen  as an instance of how the
behavior outside of compact sets (in this case the supports of the
$\varphi_n$) becomes irrelevant. Intuitively speaking one may say that in a recurrent graph there is no energy sitting at infinity. 
 A convenient functional analytic  way to phrase recurrence involves the 
the (pseudo-)norm $\aVd_o \colon \mathcal{D} \to [0,\infty)$  defined by
 $$\aV{f}_o := \sqrt{\mathcal{Q}(f) + |f(o)|^2},$$
for $o\in X$ arbitrary. 
 If the graph is connected, then $\aVd_{o}$ is  a norm and 
$\aVd_{o}$ and $\aVd_{o'}$ are equivalent for $o,o'\in X$. Moreover,   $(\mathcal{D},\aVd_{o})$ is a Banach space  and  $f_n \to f$ with respect to $\aVd_{o}$ implies $f_n \to f$ pointwise.
 We then denote the  closure of the space of functions of compact support $C_c(X)$ with respect to $\aVd_o$  by $$ \mathcal{D}_0 = (\mathcal D_0)_{b,c}. $$
Then $\mathcal{D}_0$ is a Banach space (as it is a closed subspace of a Banach space) and it does not depend on $o\in X$ (as the norms $\aVd_{o}$ are all equivalent. 

\begin{thm}[Characterizations of recurrence] For a connected graph $b$ over $X$ the following statements are equivalent:
\begin{enumerate}[(i)]
\item  The graph is recurrent.
\item   The constant function $1$ belongs to $\mathcal{D}_0$.
\item  $\mathcal{D} = \mathcal{D}_0$.
\end{enumerate}
\end{thm}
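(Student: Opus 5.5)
I will prove the cycle (i)$\Rightarrow$(ii)$\Rightarrow$(iii)$\Rightarrow$(i). As observed after the definition of recurrence, we may assume $c=0$ throughout (recurrence forces $c=0$ by lower semicontinuity, and for (ii) or (iii) the relevant statements will likewise give $\mathcal{Q}(1)<\infty$ and, in fact, the arguments below only use $b$).

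For (i)$\Rightarrow$(ii), I take a null sequence $(\varphi_n)$ in $C_c(X)$ with $\varphi_n\to 1$ pointwise and $\mathcal{Q}(\varphi_n)\to 0$. Since $c=0$, we have $\mathcal{Q}(1-\varphi_n)=\mathcal{Q}(\varphi_n)\to 0$, and $|1-\varphi_n(o)|\to 0$. Hence $\|1-\varphi_n\|_o\to 0$, so $1\in\mathcal{D}_0$.

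The implication (iii)$\Rightarrow$(i) is nearly as direct. Since $c=0$, the constant $1$ has $\mathcal{Q}(1)=0$, so $1\in\mathcal{D}=\mathcal{D}_0$. Thus there are $\varphi_n\in C_c(X)$ with $\|1-\varphi_n\|_o\to0$. Norm convergence gives pointwise convergence $\varphi_n\to 1$ (stated above) and $\mathcal{Q}(\varphi_n)=\mathcal{Q}(1-\varphi_n)\to 0$, which is recurrence. If one does not want to presuppose $c=0$ in (iii), note that $\mathcal{Q}(1)<\infty$ is needed. I would handle this by instead applying (iii) to some nonconstant function, but simplest is to run the same argument with (ii) where the hypothesis $1\in\mathcal{D}_0$ already implies $1\in\mathcal{D}$.

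The main step is (ii)$\Rightarrow$(iii). Given $f\in\mathcal{D}$, I first reduce to bounded $f$. The truncations $f_k=(f\wedge k)\vee(-k)$ converge to $f$ in $\|\cdot\|_o$: pointwise convergence and $\mathcal{Q}(f-f_k)\to0$ follow by dominated convergence in the edge sum, since $|\nabla_{x,y}(f-f_k)|\le|\nabla_{x,y}f|$ and the gradient of $f-f_k$ vanishes on each edge once $k$ exceeds $|f|$ at both endpoints. As $\mathcal{D}_0$ is closed, it suffices to treat bounded $f$, say $|f|\le k$.

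Take $\varphi_n\in C_c(X)$ with $\varphi_n\to1$ in $\|\cdot\|_o$. Replacing $\varphi_n$ by $(\varphi_n\wedge1)\vee0$ keeps them in $C_c$ with $\mathcal{Q}$ not increased and still pointwise convergent. Then $f\varphi_n\in C_c(X)$, and I will show $f\varphi_n\to f$ in $\|\cdot\|_o$. Pointwise convergence is clear. For the energy, I write
$$\nabla_{x,y}(f(1-\varphi_n))=(1-\varphi_n(x))\nabla_{x,y}f-f(y)\nabla_{x,y}\varphi_n.$$
The first term tends to $0$ in $\ell^2(b)$ by dominated convergence, with majorant $|\nabla f|$. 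The second is bounded by $k^2\mathcal{Q}(\varphi_n)\to0$; here I use $\mathcal{Q}(\varphi_n)=\mathcal{Q}(1-\varphi_n)\to0$, which again uses $c=0$.

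Hence $f\in\mathcal{D}_0$, so $\mathcal{D}=\mathcal{D}_0$. I expect this last estimate, that is, controlling the cross term and justifying the truncation approximation, to be the only real obstacle; everything else is formal.
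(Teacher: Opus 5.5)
Your proof is correct and complete for the statement as posed. Here ``graph $b$'' means $c=0$, by the paper's convention in the recurrence part. The survey does not prove this theorem; it quotes it as standard background (see \cite{KLW21}). Your argument is a clean, self-contained version of the usual proof. You truncate to reduce to bounded $f$ and clip the approximating sequence to $0\le\psi_n\le 1$. You then split $\nabla_{x,y}(f(1-\psi_n))$ into a term handled by dominated convergence and a term bounded by $k^2\mathcal{Q}(\psi_n)$.

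One correction to your side remarks. The condition $c=0$ is a hypothesis of the statement, not something (ii) or (iii) would supply. Your arguments also genuinely use it, via $\mathcal{Q}(1-\varphi_n)=\mathcal{Q}(\varphi_n)$, so they do not ``only use $b$''. In fact (ii)$\Rightarrow$(i) fails when $c\neq 0$. Take $\mathbb{Z}$ with standard weights and $c=1_{\{0\}}$, and the tent functions $\varphi_n=(1-|\cdot|/n)_+$. For $o=0$ these give $\|1-\varphi_n\|_o^2=2/n\to 0$, so $1\in\mathcal{D}_0$. Yet any sequence $(\eta_n)$ in $C_c(X)$ with $\eta_n\to 1$ pointwise has $\mathcal{Q}(\eta_n)\ge c(0)\eta_n(0)^2\to 1$, so the graph is not recurrent. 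The aside about ``not presupposing $c=0$'' in (iii)$\Rightarrow$(i) should therefore simply be dropped.
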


\begin{rem} Connected graphs that are not recurrent are called \textit{transient}. 
For disconnected graphs transience is a stronger property than not being recurrent. In this section we restrict attention to connected graphs.  For further background on recurrence, we refer the reader to \cite{Schmi}.
\end{rem}

\section{Dense embeddings and  the capacity of their boundary}\label{sec-boundary-capacity}
In this section we discuss a concept of boundary and a concept of capacity in our setting. 
As outlined in the introduction to this part, we will deal with recurrence by  encoding infinity by  suitable boundaries and  its  irrelevance by vanishing capacity.

Let $X$ be a countable set. Let $Y$ be a topological Hausdorff space.
 We say that $X$
\textit{embeds densely} in the topological space $Y$ if $Y$
contains a copy of $X$, the restriction of the topology of $Y$ on
$X$ is the discrete topology, and $X$ is dense in $Y$.  Clearly, $Y$ must be separable whenever $X$ embeds
densely in it. Whenever $X$ embeds densely in $Y$, we define the
\textit{boundary} $\partial_Y X$  of $X$ in $Y$ by
$$\partial_Y X:= Y\setminus X.$$
The complement (in $Y$)  of any finite subset of $X$ is
open in $Y$ (as any finite set is compact  and then must be closed
due to Hausdorff property). Hence, any  such a complement is an open
neighborhood of $\partial_Y X$. In particular, any function $h$ with
finite support on $X$ can be extended (by zero) to a continuous
function on $Y$.

If $X$ embeds densely in a  compact $Y$, then $Y$ is called a
\textit{compactification} of $X$. In this case the  open
neighborhoods of $\partial_Y X$ are exactly given by the complements
of finite sets of $X$. A particular instance  is given by the
\textit{one-point-compactification}. It is given by the set $Y=X\cup
\{{\infty}\}$, where ${\infty}$ is an arbitrary additional point,
and this set is equipped  with topology given by the family of all
subsets of $Y$ that are either subsets of $X$ or whose complement is
finite. In this case the boundary of $\partial_Y X$ is just
${\infty}$.

A particular instance of dense embeddings comes about from metrics on $X$.  This is discussed next. 
Let $\sigma$ be a pseudometric on $X$.
The completion of $X$ with respect to $\sigma$ is defined as the set
of equivalence classes of $\sigma$-Cauchy sequences in $X$, where
two such sequences $(x_n)$ and $(y_n)$ are considered to be
equivalent if
$$ \lim_{n\rightarrow\infty}\sigma(x_n, y_n) = 0. $$
 This set is denoted by $\overline{X}^\sigma$ and contains a
quotient of the vertex set $X$ as the classes of the constant
sequences.  Clearly, $\sigma$ can be extended to a pseudometric
on $X$ and this extension will  -- by a slight abuse of notation --
also denoted by $\sigma$. Subsequently, the boundary with respect to $\sigma$ is defined as
$$\partial_\sigma X = \overline{X}^\sigma \setminus (X/\simeq),$$
where $x\simeq y$ if $\sigma(x,y) = 0$.  A graph is called \emph{metrically complete} with respect to a pseudometric if the
boundary is empty. Clearly, if $\sigma$ is a metric,  then $\overline{X}^\sigma$ contains a copy of $X$, this copy is dense, and
our  definition of metric completeness  agrees with the usual definition (that any Cauchy-sequence
converges).   So, in this case  $X$ embeds densely in  $\overline{X}^\sigma$ and  
$$\partial_\sigma X = \partial_{\overline{X}^\sigma} X$$
holds.

Let $b$ be a graph over $ X $ and let $m \colon X \to (0,\infty)$ be a
measure. The \emph{capacity} of a subset $U\subset X$ is defined
by
$${\rm cap}_m(U) := \inf \{\mathcal Q(f) + \av{f}_{\ell^2(X,m)}^2  \mid f\in \mathcal D \cap \ell^2(X,m)  \text{ with } f\geq  1 \text{ on } U\},$$  
with the convention that ${\rm cap}_m(U) = \infty$ if the set in the above definition is empty. {In fact,   this is just the capacity of $Q^{(N)}$ with respect to the measure $m$, viz
 $$ {\rm cap}_m(U) = \inf\{Q_{1}^{(N)}(f)\mid f \in D(Q^{(N)})  \mbox{ with }  f\ge 1 \mbox{ on } U \}.$$}
 We  can assume $0\leq f\leq 1$ in this definition (as otherwise we could replace $f$ by  its contracted version $(f \wedge 1)_+=0\vee f\wedge 1$ that satisfies the same constraints  as $f$ but gives a reduced value in the infimum.

Whenever $X$ embeds densely in  $Y$, we can extend the capacity
to subsets of $Y$ by setting
$$ {\rm cap}_m(A) := \inf \{ {\rm cap}_m(O\cap X) \mid  O \text{ open in } Y \text{ with }  A \subset O\}. $$
%
%
%
%
%
On $X$ this definition agrees with the earlier defined capacity as 
 by assumption every subset $A \subset X$ is open  in $Y$ (as
 the topology of $Y$ induces the discrete topology on $X$). 
 The capacity is an outer measure on the power set of $Y$ with $m(A)
\leq {\rm cap}_m(A)$ for all $A \subset X$ and ${\rm cap}_m(Y)
\leq m(X)$, see e.g.   \cite[Theorem ~2.1.1 and Theorem~A.1.2]{FOT}.

A main   insight for our considerations is that  vanishing of capacity at the boundary is related to infinite distance of the boundary.  More specifically, the following lemma holds {which is taken from \cite[Corollary~3.4]{LPS23}}.

\begin{lemm}[Capacity zero sets in the boundary  have infinite distance]\label{cor-infinite-distance}  Assume that $X$ embeds densely into $Y$ and that $b$ is a graph over $X$. Then, the following assertions for $A\subset \partial_Y X$ are equivalent:

\begin{enumerate}[(i)]
\item  $ A$ has zero capacity (with respect to any finite measure).
\item There   exists an intrinsic metric   $\varrho$ with respect to a finite measure such that for any finite $F\subset X$ and any  $r>0$ there exists an open neighborhood $U$ of $A$ in $Y$  with
$$\varrho(U\cap X,F):= \inf \{ \varrho(z,x) \mid z\in U\cap X, x\in F\} \geq r.$$
\end{enumerate}
If (i) and (ii) hold, the intrinsic metric $\varrho$ can even  be chosen as path metric. 
\end{lemm}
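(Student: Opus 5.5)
We prove the two implications separately, using the correspondence between finite-energy functions and intrinsic metrics with respect to finite measures from Section~\ref{sec-intrinsic}. A key preliminary observation is that, since ${\rm cap}_m$ is defined via $Q^{(N)}_1$, vanishing of ${\rm cap}_m(A)$ for one finite measure $m$ implies it for every finite measure $m'$. Indeed, any competitor $f$ with $0\le f\le 1$ can be used for $m'$, and $\|f\|^2_{\ell^2(X,m')}$ is controlled by combining $\mathcal{Q}(f)$ with the value of $f$ on a large finite set. We make this precise in step (i)$\Rightarrow$(ii), so the phrase ``with respect to any finite measure'' causes no trouble.

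\emph{(ii)$\Rightarrow$(i).} Let $\varrho$ be intrinsic with respect to the finite measure $m$. Fix $o\in X$ and $\varepsilon>0$, and choose a finite set $F$ with $m(X\setminus F)<\varepsilon$. For $r>0$, pick $U$ as in (ii), so that $\varrho(U\cap X,F)\ge r$. Consider
$$f:=\Big(1-\tfrac{1}{r}\varrho_{U\cap X}\Big)_+ .$$
This function is $1/r$-Lipschitz, equals $1$ on $U\cap X$, and vanishes on $F$. By the estimate from Section~\ref{sec-intrinsic} for Lipschitz functions (with $c=0$), we get $\mathcal{Q}(f)\le r^{-2}m(X)$. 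Moreover, $\|f\|_2^2\le m(X\setminus F)<\varepsilon$. Hence ${\rm cap}_m(A)\le {\rm cap}_m(U\cap X)\le r^{-2}m(X)+\varepsilon$, which tends to $0$. For another finite measure $m'$, run the same argument with $F$ chosen so that $m'(X\setminus F)<\varepsilon$; the energy bound stays the same. This gives (i).

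\emph{(i)$\Rightarrow$(ii).} This is the main obstacle: we must build a single metric that works at every scale. Fix a finite measure $m$ and an exhaustion $F_1\subset F_2\subset\dots$ of $X$ by finite sets. Zero capacity gives, for each $k$, an open $O_k\supset A$ and a function $f_k$ with $0\le f_k\le 1$, $f_k=1$ on $O_k\cap X$, and $\mathcal{Q}(f_k)+\|f_k\|^2_2\le 4^{-k}$. Since $f_k(x)^2m(x)\le 4^{-k}$, after passing to a subsequence or shrinking we may arrange $f_k\le 1/2$ on $F_k$.

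Set $g:=\sum_k f_k$. Then $\mathcal{Q}(g)^{1/2}\le\sum_k 2^{-k}<\infty$, so $g\in\mathcal{D}$; $g$ is finite on $X$ since the series converges pointwise by the $\ell^2$ bound. On $F_n$ the function $g$ is bounded, while on $U_N:=\bigcap_{k\le N}O_k$ (an open neighbourhood of $A$) we have $g\ge N$.

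Define $\varrho:=\sigma_g+\sigma_0$, where $\sigma_0$ is any intrinsic metric with respect to a finite measure (which exists, as noted in Section~\ref{sec-intrinsic}). Then $\varrho$ is an intrinsic metric with respect to the finite measure obtained by adding $m_g$ and the measure for $\sigma_0$, up to a factor $2$ from $(a+b)^2\le 2a^2+2b^2$. Given $F$ and $r$, choose $N>r+\max_F g$. For $z\in U_N\cap X$ and $x\in F$ we then get $\varrho(z,x)\ge g(z)-g(x)\ge r$.

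Finally, to obtain a path metric, replace $\varrho$ by $d_\varrho$, with weights $w(x,y)=|g(x)-g(y)|+\sigma_0(x,y)$ on edges. This is adapted to the same measure, hence intrinsic. The lower bound $d_w(z,x)\ge |g(z)-g(x)|$ persists, because along any path the sum of increments dominates the total change of $g$.
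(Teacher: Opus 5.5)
Your proof is correct and follows the paper's sketch. For (i)$\Rightarrow$(ii) you sum functions $f_k$ of rapidly decaying capacity to get a finite-energy $g$ that blows up near $A$, and you use $\sigma_g$, made a genuine metric by adding $\sigma_0$ and a path metric via $d_w$ (which tacitly uses connectedness); for (ii)$\Rightarrow$(i) you cut off with $(1-\varrho_{U\cap X}/r)_+$ and apply the Lipschitz energy bound. The deviations are cosmetic: you let $r\to\infty$ with the global bound $r^{-2}m(X)$ where the paper takes $r=1$ and bounds the energy by a multiple of $m(X\setminus F)$, and the normalization $f_k\le 1/2$ on $F_k$ is never actually needed.
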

The basic  \textit{idea of the proof} is that vanishing of the capacity means the existence of functions $(f_n)$ of smaller and smaller energy each of constant value $1$ on $A$. Now, suitable summation over these $(f_n)$ gives a function $f$ of finite energy,  which tends to infinity on $A$. Then, $\varrho (x,y) = |f(x) - f(y)|$ is the desired metric.  Conversely, the existence of a metric as in (ii) allows  one to consider  for a given  finite $F$ and $r =1$ an  $U$ as in (ii) and  the function  $f_U := (1-\varrho (U,\cdot))_+$.  Then  $f_U$ is a $1$-Lipschitz function  with value $1$ on $A$, vanishing on $F$  and an estimate of Section \ref{sec-intrinsic} gives
$$\mathcal{Q}(f_U) + m(f^2) \leq  (C^2 +1) m(X\setminus F).$$
As $m$ is finite, the right hand side can be made arbitrarily small by choosing large enough $F$.


\section{Characterizations of recurrence via boundaries}\label{sec-main-recurrence}
The main abstract result of this section provides various characterizations of recurrence. This is then used to obtain a sufficient metric criterion for recurrence. Not surprisingly this criterion deals with vanishing capacity of the boundary, {see \cite[Theorem~4.2]{LPS23}}.

\begin{thm}[Characterization of recurrence] \label{recurrence}
Let $b$  be an infinite graph over $X$. The following conditions are
equivalent:

\begin{enumerate}[(i)]
 \item  $b$ is recurrent.

 \item   There exists a function of finite energy $ f\in \mathcal{D} $  that satisfies
 $$\liminf_{x\rightarrow\infty} f(x) = \sup_{F \subset X \text{ finite}} \inf \{f(x)  \mid x \in X \setminus F \} = \infty.$$

 \item There is an intrinsic metric $\sigma$ with respect to a finite measure that
induces the discrete topology on $X$ such that distance balls with
respect to $\sigma$ are finite.

\item[(iii')] There exists a finite measure $m$ and an edge weight
$w$ adapted to it such that the distance balls with respect to $d_w$
are finite.

\item For one  (every) finite
measure $m$ on $X$ and  one (every) compactification $Y$ of $X$ the
equality ${\rm cap}_m (\partial_Y X) = 0$ holds.

\item One (every) finite measure has the following feature: For any
$\varepsilon>0$ there exists a finite set $F\subset X$ with ${\rm
cap}_m (X\setminus F) <\varepsilon$.
\end{enumerate}
%

\end{thm}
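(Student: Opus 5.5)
We prove the cycle (i) $\Rightarrow$ (v) $\Rightarrow$ (iv) $\Rightarrow$ (ii) $\Rightarrow$ (iii) $\Rightarrow$ (iii') $\Rightarrow$ (i). Throughout, the graph is connected and $c=0$, as discussed in Section~\ref{sec-basics-recurrence}.

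\emph{(i) $\Rightarrow$ (v).} Let $(\varphi_n)$ be a null sequence. After replacing $\varphi_n$ by $(\varphi_n\wedge 1)_+$ we may assume $0\le \varphi_n\le 1$. Fix a finite measure $m$ and put $f_n = 1-\varphi_n$. Then $f_n = 1$ outside the finite set $F_n=\operatorname{supp}\varphi_n$, and
$$\mathcal Q(f_n)=\mathcal Q(\varphi_n)\to 0, \qquad \|f_n\|_2^2\to 0,$$
where the second limit follows from dominated convergence, using $0\le f_n\le 1$, $f_n\to 0$ pointwise and $m(X)<\infty$. Hence ${\rm cap}_m(X\setminus F_n)\to 0$, which is (v) for this $m$. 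Since this works for every finite $m$, we get "every".

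\emph{(v) $\Rightarrow$ (iv).} Let $Y$ be a compactification. The open neighbourhoods of $\partial_Y X$ are exactly the sets containing $(Y\setminus X)\cup(X\setminus F)$ with $F$ finite. By the definition of capacity on $Y$ we then have ${\rm cap}_m(\partial_YX)\le {\rm cap}_m(X\setminus F)$, and (v) makes the right hand side arbitrarily small.

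\emph{(iv) $\Rightarrow$ (ii).} Apply (iv) to the one-point compactification. Since the open neighbourhoods of $\infty$ are the cofinite sets, ${\rm cap}_m(\{\infty\})=0$ gives functions $0\le g_k\le 1$ with $g_k=1$ outside finite sets $F_k$ and $\mathcal Q(g_k)\le 4^{-k}$. Set $f=\sum_k g_k$. The partial sums converge in $\aVd_o$, because
$$\|g_k\|_o\le \sqrt{\mathcal Q(g_k)+g_k(o)^2},$$
and $g_k(o)^2\le {\rm cap}$-type bounds are controlled through $\|g_k\|_2^2\,m(o)^{-1}$, which is summable after passing to a subsequence. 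By lower semicontinuity $f\in\mathcal D$. Finally $f\ge N$ outside $F_1\cup\dots\cup F_N$, so $\liminf_{x\to\infty} f=\infty$. This is the summation idea already used in Lemma~\ref{cor-infinite-distance}.

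\emph{(ii) $\Rightarrow$ (iii) $\Rightarrow$ (iii').} Given $f$ as in (ii), set $\sigma(x,y)=|f(x)-f(y)|+\delta(x,y)$. Here $\delta$ is an intrinsic metric with respect to a finite measure that induces the discrete topology; it exists by Section~\ref{sec-intrinsic}. The metric $\sigma$ is intrinsic with respect to $2(m_f+m_\delta)$, which is finite. A ball $B_r(o)$ lies in $\{|f|\le |f(o)|+r\}$, and this set is finite by (ii). Hence all balls are finite and the topology is discrete. For (iii'), take $w=\sigma$: it is adapted, and $d_\sigma\ge\sigma$ fails in general. However, $d_\sigma\ge |f(x)-f(y)|$ still holds, since every path telescopes. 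So $d_\sigma$-balls are again contained in sublevel sets of $|f|$ and are finite.

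\emph{(iii') $\Rightarrow$ (i).} Let $\varrho=d_w$ with finite measure $m$. Set
$$\varphi_n=\Bigl(1-\tfrac1n\varrho(o,\cdot)\Bigr)_+ .$$
This function is supported in a finite ball, hence lies in $C_c(X)$, and $\varphi_n\to 1$ pointwise. Since $\varphi_n$ is $1/n$-Lipschitz, the estimate of Section~\ref{sec-intrinsic} gives $\mathcal Q(\varphi_n)\le m(X)/n^2\to 0$.

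The main obstacle is (iv) $\Rightarrow$ (ii) for an arbitrary compactification, together with the "every" clauses. Once we reduce to the one-point compactification as above this is routine, but we must check that ${\rm cap}_m(\partial_YX)=0$ for some $Y$ implies it for the one-point compactification. This holds because every neighbourhood $O$ of $\partial_YX$ in a compact $Y$ has $X\setminus O$ finite: $Y\setminus O$ is compact and discrete in $X$, hence finite. Therefore the capacity of $\partial_Y X$ equals $\inf_F {\rm cap}_m(X\setminus F)$ for every compactification $Y$, which makes (iv) and (v) literally equivalent.
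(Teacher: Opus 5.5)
Your cycle runs the middle of the argument in the opposite direction from the paper. The paper gets (iii') from (iv) by quoting Lemma~\ref{cor-infinite-distance}, passes trivially to (iii), and gets (ii) from $f=\sigma(p,\cdot)$. It then closes the loop with (ii) $\Rightarrow$ (i), bounding capacities by the layers $f\wedge(n+1)-f\wedge n$. You instead unpack the two halves of the lemma's proof idea: you sum capacitary functions to produce $f$, then build a metric from $f$. You close the loop with the Lipschitz cut-offs $(1-\varrho(o,\cdot)/n)_+$. Your route is self-contained and elementary; the paper's is shorter once the lemma is available.

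The following steps are correct, as is your handling of the ``one/every'' clauses:
\begin{itemize}
\item (i) $\Rightarrow$ (v) $\Rightarrow$ (iv) $\Rightarrow$ (ii);
\item (ii) $\Rightarrow$ (iii) and (ii) $\Rightarrow$ (iii');
\item (iii') $\Rightarrow$ (i).
\end{itemize}
In (iv) $\Rightarrow$ (ii) no subsequence is needed. The capacity bound directly gives $g_k$ with $\mathcal Q(g_k)+\|g_k\|_2^2\le 4^{-k}$, hence $g_k(o)^2\le 4^{-k}/m(o)$.

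There is, however, a gap at (iii) $\Rightarrow$ (iii'). You only prove it for the particular $\sigma$ built from $f$, using $d_\sigma\ge|f(x)-f(y)|$. So nothing in your cycle shows that an arbitrary metric $\sigma$ as in (iii) leads back to recurrence. As written, (iii) is shown to follow from the other conditions but not to imply them.

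Your claim that $d_\sigma\ge\sigma$ fails in general is also false. For any path $x=x_1,\dots,x_n=y$ the triangle inequality gives $\sum_i\sigma(x_i,x_{i+1})\ge\sigma(x,y)$, so $d_\sigma\ge\sigma$ always. The remark ``$d_\sigma\le\sigma$'' in Section~\ref{sec-intrinsic} has this inequality the wrong way round.

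Either of two fixes closes the gap:
\begin{itemize}
\item Use $d_\sigma\ge\sigma$: then (iii) $\Rightarrow$ (iii') holds for every $\sigma$, since $w=\sigma$ is adapted and the $d_\sigma$-balls lie inside the finite $\sigma$-balls.
\item Reuse your (iii') $\Rightarrow$ (i) argument: it only uses that $\varrho$ is intrinsic for a finite measure and has finite balls, so it proves (iii) $\Rightarrow$ (i) verbatim.
\end{itemize}
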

We provide a \textit{sketch of the proof}: The equivalence between (i) and (v) can be understood as follows:  Condition (i) is about  existence of  a sequence $(\varphi_n)$ in $C_c (X)$  converging to $1$ pointwise with $\mathcal{Q}(\varphi_n)\to 0$. Condition (v) on the other hand is a about a sequence $(\psi_n)$ with $\psi_n =1$ outside of  larger and larger compact sets with $\mathcal{Q}(\psi_n) \to 0$.  Now, the equivalence follows by taking $\psi_n = 1-\varphi_n$.  The equivalence between (v) and (iv) follows as the boundary of compactifications is the intersection of the complements of finite sets.  The implication (iv) $\Rightarrow$ (iii') is essentially contained in Lemma \ref{cor-infinite-distance}.  The implication (iii') $\Rightarrow$ (iii) is clear.  To derive (ii) from (iii), we consider an intrinsic metric $\sigma$ as in (ii) and set $f(x):= \sigma(p,x)$ for a fixed $p\in X$. Then,  $f$ is a $1$-Lipschitz function with respect to $\sigma$ and, hence, has finite energy by what we have discussed in the section on intrinsic metrics. Moreover, it has the desired limiting behavior  by finiteness of distance balls. It remains to discuss the implication (ii) $\Rightarrow$ (i): Here, the basic idea is to use functions of the form $\psi_n = f\wedge (n+1) - f\wedge n$ to bound the capacity.  This finishes the discussion of the proof.

The theorem gives a metric handle on recurrence. This is discussed in the remaining part of this section.  The first result 
is an immediate consequence of (part (v) of) the theorem and provides a necessary metric condition for recurrence.

\begin{corol}[Metric capacity criterion - necessary condition] \label{coro:polarity boundary}
Let $b$ be a recurrent infinite graph over $ X $. For any finite
measure $m$ on $X$ and any metric $\sigma$ on $X$ that induces the
discrete topology, we have
 $${\rm cap}_m(\partial_\sigma X)~=~0.$$
\end{corol}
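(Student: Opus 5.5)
The plan is to reduce the statement to part (v) of Theorem~\ref{recurrence} together with the monotonicity of capacity as an outer measure. The metric completion $\overline{X}^\sigma$ is in general not compact, so (iv) cannot be applied to it directly. Instead I will pass through (v), which is formulated purely in terms of subsets of $X$ and is therefore independent of any embedding.

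\emph{Step 1: a dense embedding.} Since $\sigma$ is a metric inducing the discrete topology on $X$, the completion $Y:=\overline{X}^\sigma$ is a metric space, hence Hausdorff. It contains a copy of $X$ that is dense, and the subspace topology on this copy is the discrete one. Thus $X$ embeds densely into $Y$, the boundary is $\partial_Y X=\partial_\sigma X$, and the extended capacity ${\rm cap}_m$ is defined on all subsets of $Y$.

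\emph{Step 2: open neighborhoods of the boundary.} Fix a finite measure $m$ and $\varepsilon>0$. Recurrence together with Theorem~\ref{recurrence}(v), which holds for every finite measure, gives a finite set $F\subset X$ with ${\rm cap}_m(X\setminus F)<\varepsilon$. Set $O:=Y\setminus F$. As noted earlier, finite subsets of $X$ are compact and hence closed in the Hausdorff space $Y$, so $O$ is open in $Y$. It contains $\partial_\sigma X$, and $O\cap X=X\setminus F$.

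\emph{Step 3: conclusion.} By the definition of the capacity of subsets of $Y$ as an infimum over open supersets,
$${\rm cap}_m(\partial_\sigma X)\le {\rm cap}_m(O\cap X)={\rm cap}_m(X\setminus F)<\varepsilon.$$
Since $\varepsilon>0$ was arbitrary, ${\rm cap}_m(\partial_\sigma X)=0$.

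The only point needing care is Step~1: one must check that the embedding is genuinely dense with the discrete subspace topology, which is exactly where the hypothesis that $\sigma$ induces the discrete topology enters. The remaining steps are formal consequences of (v) and the definition of the capacity on $Y$.
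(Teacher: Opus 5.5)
Your proposal is correct and follows the paper's approach: the paper calls the corollary an immediate consequence of part (v) of Theorem~\ref{recurrence}. As you do, it uses that for every finite $F\subset X$ the set $\overline{X}^\sigma\setminus F$ is an open neighborhood of $\partial_\sigma X$, so the definition of ${\rm cap}_m$ on the completion gives ${\rm cap}_m(\partial_\sigma X)\le {\rm cap}_m(X\setminus F)<\varepsilon$.
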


To prove a converse to the corollary and, hence, obtain a sufficient metric criterion for recurrence, we need some additional  compactness.  One instance of such compactness is valid for locally finite graphs and comes from the following Hopf-Rinow type theorem.

\begin{thm}[Hopf-Rinow type theorem \cite{HKMW}, \cite{KM19}) as well] \label{Keller}
Let $b$ be a locally finite graph over $ X $ and let $w$ be an edge weight. Then $d_w$ is a metric that induces the discrete topology on
$X$. Moreover, the following assertions are equivalent:
\begin{enumerate}[(i)]
 \item $(X,d_w)$ is a complete metric space.
 \item $(X,d_w)$ is geodesically complete, i.e., every infinite path has infinite length with respect to $w$.
 \item Every distance ball is finite.
 \item Every bounded and closed set is compact.
\end{enumerate}
\end{thm}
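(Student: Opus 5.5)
The plan is to prove first that $d_w$ is a metric inducing the discrete topology, using local finiteness. Then I would run through the cycle (iii)$\Leftrightarrow$(iv), (iii)$\Rightarrow$(i)$\Rightarrow$(ii)$\Rightarrow$(iii). I will assume, as is implicit in the statement, that the graph is connected and that the edge weight satisfies $w(x,y)>0$ whenever $x\sim y$.

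\emph{Metric and discrete topology.} Fix $x\in X$. Since the graph is locally finite, $x$ has only finitely many neighbours, so $\delta_x:=\min\{w(x,y)\mid y\sim x\}>0$. Any path from $x$ to some $y\neq x$ starts with an edge $(x,x_2)$, so its length is at least $\delta_x$. Hence $d_w(x,y)\geq \delta_x>0$ for $y\neq x$. This shows that $d_w$ is a metric (symmetry and the triangle inequality are automatic for path pseudometrics). It also shows that the ball of radius $\delta_x/2$ around $x$ is $\{x\}$, so the topology induced by $d_w$ is discrete.

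\emph{The easy implications.} For (iii)$\Leftrightarrow$(iv), note that in the discrete topology compact sets are exactly finite sets and every set is closed. A bounded set lies in some ball, and balls are bounded. So bounded closed sets are compact precisely when all balls are finite.

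For (iii)$\Rightarrow$(i), take a Cauchy sequence. It is bounded, hence contained in a finite ball $F$. On a finite set the metric has a positive minimal distance between distinct points, so the sequence is eventually constant and therefore converges.

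For (i)$\Rightarrow$(ii), suppose $(x_n)$ is an infinite path with $\sum_n w(x_n,x_{n+1})<\infty$. Then $d_w(x_n,x_k)\le \sum_{i\ge n} w(x_i,x_{i+1})$ for $k\geq n$, so $(x_n)$ is Cauchy. By completeness it converges to some $x\in X$. Since the topology is discrete, the sequence is eventually equal to $x$. This is impossible, because consecutive vertices of a path are neighbours and $b(x,x)=0$ forces $x_n\neq x_{n+1}$.

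\emph{Main step: (ii)$\Rightarrow$(iii).} This is where I expect the real work, and I would use a König's lemma argument. Suppose some ball $B_r(o)$ is infinite. For each $y\in B_r(o)$, choose a path $\gamma_y$ from $o$ to $y$ with $L_w(\gamma_y)\le r+1$. By deleting loops we may assume $\gamma_y$ has pairwise distinct vertices, and deleting loops only shortens the path.

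Local finiteness implies that only finitely many vertices are reachable from $o$ within $k$ combinatorial steps. Since there are infinitely many endpoints $y$, the combinatorial lengths of the $\gamma_y$ are therefore unbounded.

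Now consider the rooted tree $T$ whose nodes are the finite initial segments of the paths $\gamma_y$, ordered by extension. Each node has finitely many children, again by local finiteness, and $T$ is infinite. König's lemma then yields an infinite branch $(o=x_1,x_2,\dots)$. This branch is an infinite path with pairwise distinct vertices. Each of its finite initial segments is an initial segment of some $\gamma_y$, so $\sum_{i=1}^{n} w(x_i,x_{i+1})\le r+1$ for every $n$. Thus the path has finite length, contradicting (ii).

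The point requiring care is exactly this passage from ``infinitely many points at bounded distance'' to a single infinite path of bounded length. Here local finiteness is indispensable: it is what makes the tree finitely branching, so that König's lemma applies.
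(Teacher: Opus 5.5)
Your proof is correct and complete. The paper gives no proof of this theorem and only refers to \cite{HKMW} and \cite{KM19}, so there is no argument in the paper to compare yours against. Your cycle (iii)$\Leftrightarrow$(iv), (iii)$\Rightarrow$(i)$\Rightarrow$(ii)$\Rightarrow$(iii) is a clean self-contained route, with K\"onig's lemma handling the only nontrivial step. The hypotheses you make explicit, connectedness and $w>0$ on edges, are indeed implicit in the statement: without them $d_w$ is not a finite metric.

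Your write-up also isolates the two places where local finiteness enters: $\delta_x>0$, which gives discreteness, and finite branching of the tree. These are exactly the points that need a weaker finiteness condition once local finiteness is dropped, which is the concern of \cite{KM19}.

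Two small remarks. First, you do not need the observation that combinatorial lengths are unbounded. The tree $T$ is infinite simply because the $\gamma_y$ have pairwise distinct endpoints.

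Second, a slight refinement of your argument proves more. Enumerate $B_r(o)=\{y_1,y_2,\dots\}$ and choose $\gamma_{y_k}$ with $L_w(\gamma_{y_k})\le d_w(o,y_k)+1/k$. Then build the branch through nodes that are initial segments of infinitely many $\gamma_{y_k}$; this is the usual proof of K\"onig's lemma. Every initial segment $(x_1,\dots,x_n)$ of this branch then has length at most $d_w(o,x_n)+1/k$ for infinitely many $k$, so it is a shortest path. You therefore obtain an infinite geodesic of finite length. This shows that a formally weaker version of (ii), requiring only infinite geodesics to have infinite length, already implies (iii).
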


From the compactness feature provided by the Hopf-Rinow theorem and the abstract characterization we then obtain by a short argument  the following sufficient metric condition for recurrence.

\begin{corol}[Metric capacity criterion -  sufficient condition for locally finite graphs] \label{cap-recurrence}
A locally finite  $b$ over $X$  is recurrent if (and only if)  
$${\rm cap}_m(\partial_\sigma X) = 0$$
holds for some metric $\sigma$  that is intrinsic metric with respect to some finite measure  and induces the discrete topology.
\end{corol}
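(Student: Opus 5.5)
The ``only if'' direction is Corollary~\ref{coro:polarity boundary}, so the plan concerns the ``if'' direction. Let $\sigma$ be intrinsic with respect to a finite measure $m'$ and induce the discrete topology, and let ${\rm cap}_m(\partial_\sigma X)=0$. I will build a finite measure and an adapted edge weight $w$ whose path metric $d_w$ has finite balls. Theorem~\ref{recurrence}, implication (iii')$\Rightarrow$(i), then gives recurrence. The finiteness of balls will come from the Hopf--Rinow type Theorem~\ref{Keller}, which is where local finiteness is used.

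\emph{Step 1: a path metric pushing the boundary to infinity.} Apply Lemma~\ref{cor-infinite-distance} with $Y=\overline{X}^\sigma$ and $A=\partial_\sigma X$. Since $\sigma$ induces the discrete topology, $X$ embeds densely in $Y$. The lemma yields an intrinsic path metric $\varrho=d_v$ with respect to a finite measure $m''$, with $v$ adapted to $m''$, and with the following property: for every finite $F\subset X$ and every $r>0$ there is an open neighbourhood $U$ of $\partial_\sigma X$ in $Y$ with $\varrho(U\cap X,F)\geq r$.

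\emph{Step 2: combine the two metrics.} Set $w:=(v+\sigma)/2$ on edges. By $(a+b)^2\le 2a^2+2b^2$,
$$\tfrac12\sum_y b(x,y)w(x,y)^2\le \tfrac12\big(m''(x)+m'(x)\big),$$
so $w$ is adapted to the finite measure $(m'+m'')/2$. By Theorem~\ref{Keller}, $d_w$ is a metric, and it suffices to show geodesic completeness: every infinite path has infinite $w$-length.

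\emph{Step 3: geodesic completeness.} This is the main step, since it reconciles the metric completion for $\sigma$ with the neighbourhoods of Step~1. Suppose $(x_n)$ is an infinite path with $L_w<\infty$. Then $L_\sigma<\infty$ and $L_v<\infty$.

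First, $(x_n)$ is not eventually constant, because consecutive vertices are neighbours and hence distinct. In fact it visits infinitely many vertices. Otherwise some edge would be traversed infinitely often. Since $\sigma$ is a metric, $\sigma>0$ on every edge, and this forces $L_\sigma=\infty$.

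Second, by the triangle inequality $\sigma(x_n,x_k)\le\sum_{i=n}^{k-1}\sigma(x_i,x_{i+1})$, so $(x_n)$ is $\sigma$-Cauchy and converges to some $\xi\in Y$. If $\xi$ were a point $x\in X$, discreteness of the topology would make the sequence eventually equal to $x$, which is impossible. Hence $\xi\in\partial_\sigma X$.

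Third, take $F=\{x_1\}$ and $r>L_v$, and let $U$ be the neighbourhood from Step~1. Since $U$ is open in $Y$ and contains $\xi$, we have $x_n\in U$ for large $n$, so $\varrho(x_n,x_1)\ge r$. On the other hand, $\varrho=d_v$ is dominated by the $v$-length of the subpath from $x_1$ to $x_n$, so $\varrho(x_n,x_1)\le L_v<r$. This contradiction proves geodesic completeness.

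By Theorem~\ref{Keller}, all $d_w$-balls are finite. Theorem~\ref{recurrence}(iii') then yields that $b$ is recurrent.
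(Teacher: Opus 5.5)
Your proposal is correct and follows essentially the route the paper indicates. You use Lemma~\ref{cor-infinite-distance} to push $\partial_\sigma X$ to infinite distance, combine that metric with $\sigma$ into an adapted edge weight, get finite balls from the Hopf--Rinow Theorem~\ref{Keller}, and conclude via Theorem~\ref{recurrence}(iii'). Two small details are easily fixed. For ``only if'' you also need that some intrinsic metric with respect to a finite measure inducing the discrete topology exists; Theorem~\ref{recurrence}(iii) or the remark in Section~\ref{sec-intrinsic} provides one. And the lemma only makes $d_v$ intrinsic, not $v$ adapted, so take $v:=\varrho$ on edges, or simply use the triangle inequality for $\varrho$ directly.
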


If $\overline{X}^\sigma$ is compact, then we can drop the local finiteness assumption. 

\begin{corol}[Metric capacity criterion - sufficient condition involving compactness]
A graph $G = (X,b)$ is recurrent if 
$${\rm cap}_m(\partial_\sigma X) = 0$$
holds for some metric  $\sigma$ that is   intrinsic metric with respect to a finite measure, induces the discrete topology   and has the property that $(X,\sigma)$ is totally bounded.
\end{corol}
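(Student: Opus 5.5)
The plan is to reduce the statement to condition (iv) of Theorem~\ref{recurrence}: we show that ${\rm cap}_m(\partial_Y X)=0$ for some compactification $Y$ of $X$ and some finite measure $m$. Here $m$ is the finite measure with respect to which $\sigma$ is intrinsic and for which ${\rm cap}_m(\partial_\sigma X)=0$. The natural candidate is $Y:=\overline{X}^\sigma$. Since $\sigma$ is a metric inducing the discrete topology, $X$ embeds densely into $\overline{X}^\sigma$. A complete and totally bounded metric space is compact, and the completion of the totally bounded space $(X,\sigma)$ is again totally bounded. Hence $\overline{X}^\sigma$ is compact, and therefore it is a compactification of $X$ with $\partial_Y X=\partial_\sigma X$.

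With this identification the hypothesis reads ${\rm cap}_m(\partial_Y X)=0$ for the compactification $Y=\overline{X}^\sigma$ and the finite measure $m$. Condition (iv) of Theorem~\ref{recurrence} is phrased as ``for one (every) finite measure and one (every) compactification''. It therefore suffices for a single pair, and implication (iv) $\Rightarrow$ (i) yields recurrence.

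If one prefers not to rely on the ``one'' version of (iv), go through (v) directly instead. In a compactification, the open neighbourhoods of $\partial_Y X$ are exactly the complements of finite sets $F\subset X$. By the definition of the extended capacity, ${\rm cap}_m(\partial_Y X)=0$ means that for every $\varepsilon>0$ there is an open $O\supset\partial_Y X$ with ${\rm cap}_m(O\cap X)<\varepsilon$. Since $O$ contains a set $Y\setminus F$ with $F$ finite and capacity is monotone, we get ${\rm cap}_m(X\setminus F)<\varepsilon$. This is (v) for this $m$, and (v) $\Rightarrow$ (i) was sketched via $\psi_n=1-\varphi_n$.

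The main obstacle is the topological step. First, one must verify that compactness of $Y$ really forces every neighbourhood of the boundary to be cofinite in $X$. This holds because $Y\setminus O$ is a closed, hence compact, subset of $Y$ contained in the discrete open set $X$, so it is finite. Second, one must check that total boundedness passes to the completion. Everything else is bookkeeping with the earlier theorem.

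Note also that, unlike Corollary~\ref{cap-recurrence}, neither local finiteness nor the Hopf--Rinow theorem is needed: compactness of the completion is exactly what replaces the finiteness of balls. Intrinsicness of $\sigma$ enters only insofar as the capacity is taken with respect to a finite measure. That finiteness of $m$ is what makes Theorem~\ref{recurrence}(iv) applicable.
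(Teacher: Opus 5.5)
Your proposal is correct and matches the route the paper indicates ("if $\overline{X}^\sigma$ is compact, then we can drop the local finiteness assumption"). Total boundedness makes the completion $\overline{X}^\sigma$ compact, hence a compactification of $X$ with $\partial_{\overline{X}^\sigma}X=\partial_\sigma X$, so the hypothesis is exactly condition (iv) of Theorem~\ref{recurrence} for one finite measure and one compactification, and recurrence follows; note that only the finiteness of $m$ is used (intrinsicness of $\sigma$ plays no role here), and the finite-$X$ case, which Theorem~\ref{recurrence} excludes, is trivial.
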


\part{Characterization of Dirichlet forms  via the boundary}
In the context of the Laplacian $\Delta$ on an open subset $\Omega \subset \R^d$ it is natural to look for self-adjoint realizations of $\Delta$ on $L^2(\Omega)$. More precisely, one looks for operator domains $D \subset L^2(\Omega)$, for which $\Delta|_D$ is self-adjoint. Such self-adjoint realizations can be encoded by boundary conditions. In our context of Dirichlet forms the analogue quest is to look for  Dirichlet forms which are restrictions of a given energy form $\mathcal{Q}$. Somewhat more generally, we may  even  consider forms $Q$  that lie between
$Q^{(N)}$ and $ Q^{(D)}$ in a suitable sense. 
It turns out that such Dirichlet forms can also be described via boundaries. Specifically,  with a Dirichlet type form $q'$ on the boundary, we find 
$$Q = Q^{(N)} + q'$$
as well as a variant with $Q^{(D)}$ instead of $Q^{(N)}$. 
In some cases we even get a description of $Q$  via boundary values of the functions in questions. 
All of this, of course, requires a boundary in the first place. Providing such a boundary is the first task in this part (Section \ref{sec-boundary}). With such a boundary at hand we can then derive formulae as a above for $Q$ and even describe the forms by boundary values of the foundations (Section \ref{sec-Q}). We can then even go on and study the set of all Dirichlet forms on $(X,m)$ (Section \ref{ref-allDF}).

{The results in this part are based on \cite{KLSS19,KLSSW23}. As mentioned in the introduction,  we provide rather complete proofs in this part as  both references treat slightly different boundaries and we consider here a somewhat more specialized situation. }

\section{The harmonic boundary, traces, extensions and measures}\label{sec-boundary}
In this section we introduce a topological boundary that can describe all $1$-harmonic functions of finite energy for a given graph. It goes back to the work of Kayano and Yamasaki \cite{KY88}, see also \cite[Chapter~VI]{Soa}.  These functions play a key role for classifying all Markovian realizations of the discrete Laplacian (the self-adjoint realizations induced by Dirichlet forms).   Throughout this section we fix a graph $(b,c)$ over the discrete measure space $(X,m)$.

 \begin{rem}[$\ell^2$-theory v.s. measure free theory] \label{remark: ell two vs measure free}  We mainly {build on} results that were obtained in a slightly different setting for transient graphs. More precisely, we will transfer results for harmonic functions of finite energy to $1$-harmonic functions of finite energy that are also in $\ell^2(X,m)$. There are three observations making this possible:
  \begin{enumerate}[(a)]
   \item $\cL_{b,c} + 1 = \cL_{b,c + m}$. In particular,  $1$-harmonic functions for the graph $(b,c)$ correspond to harmonic functions for the graph $(b, c + m)$.
   \item $$D(Q^{(N)}) = \cD_{b,c} \cap \ell^2(X,m) = \cD_{b, c + m}$$ and $$D(Q^{(D)}) = (\cD_{0})_{b,c} \cap \ell^2(X,m) = (\cD_0)_{b,c+m}.$$
   \item The graph $(b,c + m)$ is transient because $c(x) + m(x)  > 0$ for all $x \in X$.  
  \end{enumerate}
 \end{rem}

\subsection{The harmonic boundary and the trace map} \label{section:harmonic boundary}
 In this section we discuss the actual boundary in question and how to restrict the functions in question to this boundary.

\begin{propo}[Existence of a compatible compactification]
There exists a unique (up to homeomorphism) compact Hausdorff space $\hat X$ with the following properties:
\begin{enumerate}[(a)]
 \item $X$ is a dense subset of $\hat X$ and for any $x \in X$ the singleton set $\{x\}$ is open in $\hat X$. 
 \item Any $f \in  D(Q^{(N)}) \cap \ell^\infty(X)$ can be extended to a continuous function $\hat f \colon \hat X \to \R$.
  \item $\{\hat f \mid f\in  D(Q^{(N)}) \cap \ell^\infty(X)\}$ separates the points of $\hat X$. 
\end{enumerate}
\end{propo}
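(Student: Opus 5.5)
The plan is to realize $\hat X$ as the Gelfand spectrum of a commutative unital C$^*$-algebra. Let $\mathcal{A}$ denote the closure of the algebra generated by $D(Q^{(N)})\cap \ell^\infty(X)$ and the constants, taken in $\ell^\infty(X)$ with respect to $\av{\cdot}_\infty$.

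The first step is to show that $D(Q^{(N)})\cap\ell^\infty(X)$ is an algebra. For bounded $f,g$ in the domain, the identity
$$\nabla_{x,y}(fg) = f(x)\nabla_{x,y} g + g(y)\nabla_{x,y} f$$
gives
$$\mathcal{Q}(fg)\leq 2\av{f}_\infty^2\mathcal{Q}(g)+2\av{g}_\infty^2\mathcal{Q}(f).$$
The killing-term part is handled in the same way, and $fg\in\ell^2(X,m)$ because $f$ is bounded. Hence $\mathcal{A}$ is a unital C$^*$-subalgebra of $\ell^\infty(X)$, closed under complex conjugation after complexifying. By Gelfand theory it is isometrically isomorphic to $C(\hat X)$ with $\hat X$ its character space, which is compact Hausdorff. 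Each $x\in X$ gives a character $\delta_x\colon f\mapsto f(x)$, and $f\mapsto\hat f$ is the Gelfand transform. So (b) and (c) hold by construction: the $\hat f$ with $f$ in the domain generate $C(\hat X)$, hence separate points by Stone--Weierstrass, or simply because characters are determined by their values on generators.

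Next I would check (a). Since $C_c(X)\subset D(Q^{(N)})\cap\ell^\infty(X)$, the indicator $1_{\{x\}}$ lies in $\mathcal{A}$. Its transform $\hat 1_{\{x\}}$ is a continuous idempotent, so it is the indicator of a clopen set. That set equals $\{\delta_x\}$: a character $\chi$ with $\chi(1_{\{x\}})=1$ satisfies $\chi(g)=\chi(g1_{\{x\}})=g(x)\chi(1_{\{x\}})=g(x)$. Thus $\{\delta_x\}$ is open, and $x\mapsto \delta_x$ is injective because $C_c$ functions separate points. For density, suppose $\overline{\{\delta_x\}}\neq\hat X$. Urysohn then gives a nonzero $\hat h\in C(\hat X)$ vanishing at all $\delta_x$. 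But then $h(x)=0$ for all $x$, so $h=0$ in $\mathcal{A}\subset\ell^\infty(X)$, a contradiction. Finally, the topology on $X$ is discrete since each singleton is open.

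Uniqueness is the main obstacle, though it is standard. Given another $Y$ with (a)--(c), every $f$ in the domain extends continuously to $Y$. By density of $X$ in $Y$ the extension map is multiplicative and isometric for sup norms, so it extends to an isometric $*$-homomorphism $\Phi\colon\mathcal{A}\to C(Y)$. Its image separates points by (c) and contains the constants, so by Stone--Weierstrass $\Phi$ is onto. Gelfand duality then yields a homeomorphism $Y\cong\hat X$ that is the identity on $X$. The delicate point is checking that isometry holds, namely $\sup_Y|\tilde f|=\sup_X|f|$. This follows from density of $X$ and continuity, and it also makes $\Phi$ well defined on closures.
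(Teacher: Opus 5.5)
Your proposal is correct and takes essentially the paper's route: the paper only invokes Gelfand theory for the complexified algebra $D(Q^{(N)})\cap\ell^\infty(X)$ and defers the details to the cited references, and your argument supplies those details (algebra property, identification of $X$ as open dense points, uniqueness via Stone--Weierstrass and Gelfand duality). Adjoining the constants is exactly what makes $\hat X$ compact when $1\notin D(Q^{(N)})$, and it produces the point $\infty$ that appears later in the paper's lemma on the range of $\gamma_0$.
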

\begin{proof} This is a standard application of Gelfand theory using the fact that $D(Q^{(N)}) \cap \ell^\infty(X)$ (or rather its complexification) is an algebra. For details, we refer to  \cite[Theorem~6.4]{Soa} (which considers $\cD_{b,0} \cap \ell^\infty(X) $ instead of  $D(Q^{(N)}) \cap \ell^\infty(X)$)  or \cite[Section~4]{HKLW} (which only treats  the case when  $D(Q^{(N)}) = \mathcal D_{b,c+m} \subset \ell^\infty(X)$). The arguments given in both references can be easily adapted to treat $D(Q^{(N)}) \cap \ell^\infty(X)$.
%
\end{proof}

\begin{rem}
 The space $\hat X$ is the {\em Royden compactification} of $X$ with respect to the graph $(b,c+m)$.
\end{rem}

Unless necessary we drop the superscript hat in the notation of the extended function, i.e., we tacitly extend any function in $f\in D(Q^{(N)}) \cap \ell^\infty(X)$ to a continuous function on $\hat X$ denoted by $f$ again.   The set 
$$\partial X = \{x \in \hat X \setminus X \mid  f(x) =  0 \text{ for all } f \in D(Q^{(D)}) \cap \ell^\infty(X)\}$$
is called {\em $1$-harmonic boundary}.  As it is an intersection of closed subsets of the compact set $\hat X$ it is compact. 
The  map 
$$\gamma_0 \colon D(Q^{(N)}) \cap \ell^\infty(X) \to C(\partial X), \quad f \mapsto  f|_{\partial X}$$
is called {\em preliminary trace map}. Clearly, it is an algebra homomorphism  and a lattice homomorphism (with respect to taking minimum and maximum of $\R$-valued functions) with
$$D(Q^{(D)}) \cap \ell^\infty(X) \subset \ker \gamma_0.  $$

\begin{rem}[Why we consider $\partial X$ and not $\widehat{X}\setminus X$]
Below we will see that $D(Q^{(D)}) \cap \ell^\infty(X) = \ker \gamma_0$. This formula is the reason why we only consider $\partial X$ as the boundary of $X$ and not the entire set $\hat X \setminus X$. The equality $\partial X = \hat X \setminus X$  is equivalent to uniform transience of the graph $(b,c + m)$, see \cite[Theorem~4.2]{KLSW17}. 
\end{rem}

We denote by $\overline \R = [-\infty,\infty]$ the two-point compactification of $\R$.

\begin{propo}[Extending elements of $D(Q^{(N)})$ to $\widehat{X}$.]\label{proposition:unbounded extension}
 Any $f \in D(Q^{(N)})$ can be uniquely extended to a continuous function $\hat f \colon \hat X \to \overline \R$ and if $f \in D(Q^{(D)})$, then $\hat f = 0$ on $\partial X$.
 
 More precisely, for $f \in D(Q^{(N)})$ and $n \in \N$, we have $(f \wedge n) \vee (-n) \in D(Q^{(N)}) \cap \ell^\infty(X)$ and  
 $\hat f =   \lim_{n \to \infty} (f \wedge n) \vee (-n)$
 pointwise on $\hat X$ (where as before $(f \wedge n) \vee (-n)$ is tacitly extended to $\hat X$). 
\end{propo}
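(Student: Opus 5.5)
The plan is to define $\hat f$ as the pointwise limit of the truncations $f_n := (f\wedge n)\vee(-n)$ and to prove continuity by a compactification argument in $\overline\R$. Since $\mathcal Q$ and the $\ell^2$-norm are compatible with normal contractions and $s\mapsto (s\wedge n)\vee(-n)$ is one, we get $f_n\in D(Q^{(N)})\cap\ell^\infty(X)$. By the preceding proposition each $f_n$ has a continuous extension $\hat f_n$ to $\hat X$. For $x\in X$ the sequence $f_n(x)$ is eventually constant, equal to $f(x)$. On $\hat X$, monotonicity of $n\mapsto \hat f_n(\xi)$ is not monotone in general, because truncation from both sides is not monotone. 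I would therefore split $f=f_+-f_-$ and treat $f\geq 0$ first. Then $f\wedge n$ increases in $n$, and by density of $X$ and continuity also $\widehat{f\wedge n}$ increases on $\hat X$. So $\hat f:=\sup_n \widehat{f\wedge n}$ exists in $[0,\infty]$ and is lower semicontinuous. The general case then follows from $\hat f = \hat{f_+}-\hat{f_-}$, once one checks that $\hat{f_+}$ and $\hat{f_-}$ are never simultaneously infinite. This holds because $(f_+\wedge n)(f_-\wedge n)=0$ on $X$, hence on $\hat X$ by continuity of products.

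The key step is upper semicontinuity, i.e. continuity into $[0,\infty]$. I would use the identity $\widehat{f\wedge n} = \hat f\wedge n$ on $\hat X$. This follows because $(f\wedge N)\wedge n = f\wedge n$ for $N\ge n$, and taking minima commutes with continuous extension (lattice homomorphism, again by density). Hence $\hat f\wedge n$ is continuous for every $n$. A function into $[0,\infty]$ all of whose truncations $\hat f\wedge n$ are continuous is continuous: preimages of $(a,b)$ and $(a,\infty]$ are expressed through $\hat f\wedge n$ with $n>b$, respectively through $\bigcup_n\{\hat f\wedge n>a\}$. Uniqueness is immediate from density of $X$ and the Hausdorff property of $\overline\R$. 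The claimed pointwise formula $\hat f=\lim_n \widehat{(f\wedge n)\vee(-n)}$ then follows from $\widehat{(f\wedge n)\vee(-n)} = (\hat f\wedge n)\vee(-n)$, which is the same lattice argument.

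For the second claim, let $f\in D(Q^{(D)})$. Then $f_n\in D(Q^{(D)})$, since $Q^{(D)}$ is a Dirichlet form and therefore Markovian. Also $f_n\in\ell^\infty$, so by the definition of $\partial X$ we get $\hat f_n=0$ on $\partial X$. Consequently $(\hat f\wedge n)\vee(-n)=0$ there for all $n$, which gives $\hat f=0$ on $\partial X$.

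The main obstacle is the lattice-compatibility of the extension. Specifically, we need $\widehat{g\wedge h}=\hat g\wedge\hat h$ for bounded $g,h\in D(Q^{(N)})$. This is proved by noting that both sides are continuous on $\hat X$ and agree on the dense set $X$. Once that is in place, the rest is bookkeeping with truncations.
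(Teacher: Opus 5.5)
Your proposal is correct and follows essentially the same route as the paper: truncate, take the monotone limit for $f\geq 0$, use that minima commute with continuous extension by density of $X$, split $f=f_+-f_-$, and get the $D(Q^{(D)})$ statement from the Markov property of $Q^{(D)}$. The differences are only organizational. You prove the global identity $\widehat{f\wedge n}=\hat f\wedge n$ where the paper does a local case analysis at points with finite and infinite value. You also rule out $\widehat{f_+}(\xi)=\widehat{f_-}(\xi)=\infty$ pointwise via $(\widehat{f_+}\wedge n)(\widehat{f_-}\wedge n)=0$, which tacitly uses continuity of subtraction on $[0,\infty]^2\setminus\{(\infty,\infty)\}$, whereas the paper shows that $\widehat{f_-}$ vanishes on a neighbourhood of each point where $\widehat{f_+}=\infty$.
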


\begin{rem}
  Before giving the proof, there is a little warning in place here. For $x \in \hat X \setminus X$ the quantity $(f(x) \wedge n) \vee (-n)$ is not (yet)  defined, while $((f \wedge n) \vee (-n))(x)$ is defined via the unique continuous extension of $(f \wedge n) \vee (-n) \in D(Q^{(N)}) \cap \ell^\infty(X)$ to $\hat X$.
\end{rem}

\begin{proof}
We basically follow the proof of \cite[Theorem~6.9]{Soa}. The density of $X$ in $\hat X$ implies that there is at most one continuous extension of $f$, hence it suffices to show its existence via the formula as pointwise limit. 

First suppose that $f \geq 0$. Clearly, $f \wedge n \in D(Q^{(N)})\cap \ell^\infty (X)$ with $f\wedge n\leq f\wedge m$ for $n\leq m$. Hence,  the limit $\hat f := \lim_{n \to \infty} f \wedge n$ exists pointwise and $\hat f$ is an extension of $f$ to $\widehat{X}$.
  To prove its continuity, we distinguish two cases: 

\textit{Case 1: $x \in \hat X$ with $\hat f(x) < \infty$.}  We choose $n > \hat f(x)$, which implies $(f \wedge n)(x) \leq \hat f(x)  < n$. By the continuity of $f \wedge n$, we find an open neighborhood $U$ of $x$ in $\hat X$ such that $f \wedge n < n$ on $U$. As extending functions to $\hat X$ is compatible with the lattice operations, we infer 
$$(f\wedge m) (y) \wedge n = (f\wedge n) (y)< n$$
for all $n\leq m $ and $y\in U$. This gives $(f\wedge m) (y) = (f\wedge n) (y)$ for all $y\in U$ and $n\leq m$. From this, we infer $\hat f  = f\wedge n$ on $U$ and continuity in $x$ follows. 

\textit{ Case 2: $\hat f(x) = \infty$.} By definition, for each $C > 0$, we find $n \in \N$ such that $(f \wedge n)(x) > C$. The continuity of $f \wedge n$ yields an open neighborhood $U$ of $x$ in $\hat X$ such that 
 $f \wedge n > C$ on $U$. Since $\hat f \geq f \wedge n$, this implies $\hat f > C$ on $U$ and shows the continuity of $\hat f$ in $x$.

 So far, we only treated nonnegative $f$. For arbitrary $f \in D(Q^{(N)})$, we have 
 $$(f \wedge n) \vee (-n) = (f_+ \wedge n) - (f_- \wedge n).$$
 We already know that the summands on the right side converge to continuous functions $\widehat{f_{\pm}} \colon \hat X \to [0,\infty]$, which extend $f_\pm$. In order to show that $\widehat{f_{+}} - \widehat{f_{-}}$ exists and is continuous,
it suffices to show that for all $x \in X$ with $\widehat{f_{+}}(x) = \infty$ there exists an open neighborhood $U$ of $x$ such that $\widehat{f_{-}} = 0$ on $U$.  If $\widehat{f_{+}}(x) = \infty$, then there exists $n \in \N$ such that $(f_+ \wedge n) (x) > 0$. The continuity of $f_+ \wedge n$ yields an open neighborhood $U$ of $x$ such that $f_+ \wedge n > 0$ on $U$. For all $m \geq n$ this implies $f_- \wedge m = 0$  on $U \cap X$. Using the continuity of $f_- \wedge m$ and the density of $X$ in $\hat X$,  we infer $f_- \wedge m = 0$ on $U$, $m \geq n$, and arrive at $\widehat{f_{-}} = 0$ on $U$.

 For the statement on $f \in D(Q^{(D)})$, we simply note that in this case $(f \wedge n) \vee (-n) \in D(Q^{(D)}) \cap \ell^\infty(X)$ since $ Q^{(D)} $ is a Dirichlet form. By the definition of $\partial X$ this implies  $(f \wedge n) \vee (-n) = 0$ on $\partial X$. 
\end{proof}

As before,  when dealing with bounded functions in $D(Q^{(N)})$, we drop the superscript hat on $f$  and tacitly extend any $f \in D(Q^{(N)})$ to a continuous function on $\hat X$. The map 
$$\gamma \colon D(Q^{(N)}) \to C(\partial X,\overline \R), \quad f \mapsto  f|_{\partial X}$$
is called {\em trace map}. It extends $\gamma_0$ and  the  previous proposition shows 
$$\gamma (f) = \lim_{n \to \infty} \gamma_0 ((f \wedge n) \vee (-n)),$$
with the limit existing pointwise. 

We next discuss how the trace map is compatible with various operations: The set $C(\partial X,\overline \R)$ is not a vector space but a lattice with respect to taking pointwise maxima and minima. With respect to these operations the trace map is a lattice homomorphism. If $f \in D(Q^{(N)})$ with $\gamma (f) \in C(\partial X,\R)$, then for all $g \in D(Q^{(N)})$ the sum $\gamma(f) + \gamma(g)$ is well-defined in $C(\partial X,\overline \R)$ and we have  $\gamma(f + g) = \gamma(f) + \gamma(g)$. In this sense, $\gamma$ is linear.

If $C \colon \R \to \R$ is a monotone normal contraction, then the limits $C(\pm \infty) := \lim_{t \to \infty} C(\pm t)$ exist in $\overline \R$. With this extension, we have
$$C(\gamma(f)) = \gamma(C(f)), \quad f \in D(Q^{(N)}).$$
In this sense, the trace map is compatible with monotone normal contractions. 

Below we shall see that for $f \in D(Q^{(N)})$ the function $\gamma(f)$ only attains the ``problematic'' values $\pm \infty$ on a very small set.


\subsection{Royden decomposition, maximum principle and measures} \label{section:Royden}
In the last section we have discussed how a function $f\in D(Q^{(N)})$ can be extended to $\hat X$. Here, we first present a decomposition of any $f$ into two parts, one vanishing on the boundary, and the other $1$-harmonic. We then discuss how $1$-harmonic $f$ are determined by their boundary value and derive various consequences.

\begin{propo}[Royden decomposition]\label{proposition:royden}

\begin{enumerate}[(a)]
 \item To any $f \in D(Q^{(N)})$ there exist unique $f_h \in D(Q^{(N)})$ and $f_0 \in D(Q^{(D)})$ with $f = f_h + f_0$ and 
$\cL f_h + f_h = 0$. Moreover,   $$Q^{(N)}_1(f) = Q^{(N)}_1(f_0) + Q^{(N)}_1(f_h)$$
holds  
 and the map  $(D(Q^{(N)}),\av{\cdot}_{Q^{(N)}}) \to (D(Q^{(N)}),\av{\cdot}_{Q^{(N)}})$,  $f \mapsto f_h$ is continuous.
 \item If for $-\infty \leq \alpha \leq 0 \leq \beta \leq \infty$ the function $f\in D(Q^{(N)})$ satisfies $\alpha \leq f \leq \beta
  $, then $\alpha \leq f_h \leq \beta$.
\end{enumerate}
\end{propo}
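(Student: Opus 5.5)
Part (a) is an orthogonal projection argument in the Hilbert space $(D(Q^{(N)}), Q^{(N)}_1)$. Since $Q^{(N)}$ is closed, $D(Q^{(N)})$ is a Hilbert space with inner product $Q^{(N)}_1$. The subspace $D(Q^{(D)})$ is, by construction, the $Q^{(N)}_1$-closure of $C_c(X)$, and hence a closed subspace. We set $f_0 := P f$, where $P$ is the orthogonal projection onto $D(Q^{(D)})$, and $f_h := f - f_0$.

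Then $f_h \perp D(Q^{(D)})$. In particular,
$$Q^{(N)}(\varphi, f_h) + \langle \varphi, f_h\rangle = 0 \quad \text{for all } \varphi = 1_{\{x\}} \in C_c(X).$$
By Green's formula (valid since $f_h \in \mathcal D \subset \mathcal F$), the left-hand side equals $(\mathcal L f_h + f_h)(x)\, m(x)$. Thus $\mathcal L f_h + f_h = 0$.

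The Pythagorean identity $Q^{(N)}_1(f) = Q^{(N)}_1(f_0) + Q^{(N)}_1(f_h)$ follows from orthogonality. Continuity of $f \mapsto f_h = (I-P)f$ holds because orthogonal projections are contractions.

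For uniqueness, suppose $f = g_h + g_0$ is a second such decomposition. Then $g_h - f_h = f_0 - g_0 \in D(Q^{(D)})$. On the other hand, $g_h - f_h$ is $1$-harmonic, and Green's formula with $\varphi \in C_c(X)$ shows it is $Q^{(N)}_1$-orthogonal to $C_c(X)$, and hence to the closure $D(Q^{(D)})$. An element orthogonal to itself is zero, which gives uniqueness.

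Part (b) is the main point; it is a maximum principle argument, and I would use a variational characterization. The element $f_h$ is the unique minimizer of $Q^{(N)}_1(g)$ over the affine space $g \in f + D(Q^{(D)})$, since $Q^{(N)}_1(f_h + \psi) = Q^{(N)}_1(f_h) + Q^{(N)}_1(\psi)$ for $\psi \in D(Q^{(D)})$. Set $C(t) := (t \wedge \beta) \vee \alpha$. Because $\alpha \le 0 \le \beta$, this is a normal contraction (with the obvious meaning when $\alpha = -\infty$ or $\beta = \infty$).

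Consider $g := C(f_h)$. I need to check two things.

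First, $Q^{(N)}_1(g) \le Q^{(N)}_1(f_h)$. This holds because both $\mathcal Q$ and the $\ell^2$ norm are compatible with normal contractions.

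Second, and this is the expected obstacle, I need $g - f \in D(Q^{(D)})$. I would write
$$g - f = \bigl(C(f_h) - C(f)\bigr) + \bigl(C(f) - f\bigr).$$
The second bracket vanishes because $\alpha \le f \le \beta$ gives $C(f) = f$. So it suffices that $C(f_0 + f_h) - C(f_h) \in D(Q^{(D)})$ whenever $f_0 \in D(Q^{(D)})$. This "contraction of a perturbation" property is a standard fact for regular Dirichlet forms: $D(Q^{(D)})$ is an ideal in the sense that $|u| \le |v|$ with $v \in D(Q^{(D)})$ and $u \in D(Q^{(N)})$ implies $u \in D(Q^{(D)})$. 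Here $u = C(f_h+f_0) - C(f_h)$ satisfies $|u| \le |f_0|$.

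If I cannot cite this ideal property, I would prove it directly. First take $f_0 \in C_c(X)$: then $u$ has finite support, so $u \in C_c(X)$. In general, approximate $f_0$ by $\varphi_n \in C_c(X)$ in $Q^{(N)}_1$. The functions $u_n = C(f_h + \varphi_n) - C(f_h)$ are then bounded in form norm and converge pointwise to $u$. A Banach--Saks or weak-compactness argument, using that $D(Q^{(D)})$ is weakly closed, then gives $u \in D(Q^{(D)})$.

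With both points in hand, $g$ is a competitor with $Q^{(N)}_1(g) \le Q^{(N)}_1(f_h)$. Uniqueness of the minimizer forces $g = f_h$, that is, $\alpha \le f_h \le \beta$.
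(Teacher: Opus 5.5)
Your proof is correct. It differs from the paper's in that it is self-contained: the paper gives no direct argument. Instead it uses the identities $\mathcal{L}_{b,c}+1=\mathcal{L}_{b,c+m}$, $D(Q^{(N)})=\mathcal{D}_{b,c+m}$ and $D(Q^{(D)})=(\mathcal{D}_0)_{b,c+m}$, together with transience of $(b,c+m)$, and then quotes the Royden decomposition for transient graphs from \cite[Proposition~5.1]{KLSW17}.

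You work directly in the Hilbert space $(D(Q^{(N)}),Q^{(N)}_1)$. This is literally the energy space of $(b,c+m)$, since $Q^{(N)}_1=\mathcal{Q}_{b,c+m}$ there. You get (a) as the orthogonal projection onto the closure of $C_c(X)$ combined with Green's formula. You get (b) from the Dirichlet principle: $f_h$ is the unique $Q^{(N)}_1$-minimizer on $f+D(Q^{(D)})$, and $C(f_h+f_0)-C(f_h)\in D(Q^{(D)})$ for $C(t)=(t\wedge\beta)\vee\alpha$.

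Your approximation argument for this last fact is sound. The functions $u_n=C(f_h+\varphi_n)-C(f_h)$ lie in $C_c(X)$ and are $Q^{(N)}_1$-bounded. Since $|u_n-u|\le|\varphi_n-f_0|$, they converge to $u$ in $\ell^2(X,m)$. Hence a weakly convergent subsequence has limit $u$, and the weakly closed subspace $D(Q^{(D)})$ contains it.

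The paper's route gains brevity and uniformity with the rest of that part, where the maximum principle and the boundary theory are likewise imported from the measure-free transient setting. Your route makes two things visible. First, definiteness of $Q^{(N)}_1$, which is exactly the transience of $(b,c+m)$, is all that (a) needs. Second, (b) rests only on the variational characterization plus the ideal-type behaviour of $D(Q^{(D)})$ inside $D(Q^{(N)})$ under normal contractions. That argument transfers verbatim to other pairs of Dirichlet forms with this ideal property.
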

\begin{proof}
This is the Royden decomposition discussed in  \cite[Proposition~5.1]{KLSW17} applied to the transient graph $(b,c + m)$, noting that harmonic functions with respect to  $(b,c + m)$ correspond to $1$-harmonic functions with respect to $(b,c)$, see  Remark~\ref{remark: ell two vs measure free}.
\end{proof}

The following maximum principle is implicitly contained in \cite[Proposition~5.2]{KLSW17}. Since it may be of use in other contexts as well, we include a proof.

\begin{thm}[Maximum principle]
Assume that $f \in D(Q^{(N)})$ satisfies $\cL f + f \geq 0$ on $X$ and $f \geq 0$ on $\partial X$. Then, $f \geq 0$. 
\end{thm}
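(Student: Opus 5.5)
The plan is to show that the negative part $f_- = (-f)\vee 0$ vanishes. I would first reduce to a function supported away from the boundary, and then test the inequality $\cL f + f\geq 0$ against $f_-$ via Green's formula.

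\emph{Step 1: $f_-\in D(Q^{(D)})$.} Since $Q^{(N)}$ is Markovian, $f_-\in D(Q^{(N)})$. By compatibility of the trace with monotone normal contractions, $\gamma(f_-) = \gamma(f)_- = 0$ on $\partial X$, because $f\geq 0$ there. Next I would apply the Royden decomposition (Proposition~\ref{proposition:royden}) to $f_-$, writing $f_- = (f_-)_h + (f_-)_0$. The trace of $(f_-)_0$ vanishes by Proposition~\ref{proposition:unbounded extension}, so $(f_-)_h$ is a $1$-harmonic function of finite energy with vanishing trace. The point to establish is that such a function is $0$.

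For this I would truncate. For $\varepsilon>0$, the function $g_\varepsilon = (f_- - \varepsilon)_+$ has a continuous extension to $\hat X$ that vanishes on a neighborhood of the compact set $\partial X$. I would then argue that $g_\varepsilon\wedge n$ lies in $D(Q^{(D)})$. The reason is that $D(Q^{(N)})\cap\ell^\infty$ functions vanishing near $\partial X$ can be written as products with elements of $D(Q^{(D)})\cap\ell^\infty$ that equal $1$ on the support; such cut-offs come from compactness of $\hat X \setminus U$ together with the definition of $\partial X$. Finally, $g_\varepsilon\wedge n \to f_-$ in form norm as $\varepsilon\to 0$ and $n\to\infty$, and $D(Q^{(D)})$ is closed, so $f_-\in D(Q^{(D)})$. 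I expect this step, proving $\ker\gamma\subseteq D(Q^{(D)})$ for nonnegative functions, to be the main obstacle. The alternative is to use the identity $D(Q^{(D)})\cap\ell^\infty = \ker\gamma_0$ announced in the remark, if it may be taken as available.

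\emph{Step 2: Green's formula.} Choose $\varphi_n\in C_c(X)$ with $\varphi_n\to f_-$ in $Q^{(N)}_1$-norm; Step~1 makes this possible. Replacing $\varphi_n$ by $(\varphi_n\wedge f_-)_+$, I may assume $0\leq\varphi_n$ and $\varphi_n=0$ where $f_-=0$; the form-norm convergence then needs a short argument or a passage to Cesàro means. Green's formula gives
$$\mathcal Q(\varphi_n, f) + \langle\varphi_n,f\rangle = \sum_x \varphi_n(x)(\cL f + f)(x)m(x)\geq 0.$$
Letting $n\to\infty$ yields $Q^{(N)}_1(f_-,f)\geq 0$.

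\emph{Step 3: conclusion.} Write $f = f_+ - f_-$. For the cross term, $Q^{(N)}(f_-,f_+) = -\sum_{x,y} b(x,y)f_-(x)f_+(y)\leq 0$, and $\langle f_-,f_+\rangle = 0$, so $Q^{(N)}_1(f_-,f_+)\leq 0$. Combining this with Step~2,
$$0\leq Q^{(N)}_1(f_-,f) = Q^{(N)}_1(f_-,f_+) - Q^{(N)}_1(f_-)\leq -Q^{(N)}_1(f_-),$$
hence $\|f_-\|_2=0$ and therefore $f\geq 0$.
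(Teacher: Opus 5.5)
Your argument is correct, but it runs in the opposite logical direction from the paper's.

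The paper splits $f=f_0+f_h$ by the Royden decomposition and handles $f_0\in D(Q^{(D)})$ with Lemma~\ref{lemma:positivity of superharmonic functions}. It then treats the $1$-harmonic part by truncating to $f_h\vee(-n)$ and applying Lemma~\ref{lemma:does not vanish on remaining part of boundary} to the closed set where the harmonic part of the truncation is $\le-\varepsilon$. The conclusion uses that $u\mapsto u_h$ is continuous, order preserving (Proposition~\ref{proposition:royden}(b)) and annihilates $D(Q^{(D)})$. The identity $\ker\gamma=D(Q^{(D)})$ (Corollary~\ref{coro:kernel gamma}) is only \emph{deduced from} the maximum principle afterwards.

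You instead first prove $f_-\in D(Q^{(D)})$, using the same cut-off lemma applied to the closed set $\{\widehat{f_-}\geq\varepsilon\}\subset\hat X\setminus\partial X$. In effect this is the inclusion $\ker\gamma\subseteq D(Q^{(D)})$. A single negative-part test then finishes; the paper's Lemma~\ref{lemma:positivity of superharmonic functions} really only uses $f_-\in D(Q^{(D)})$, not $f\in D(Q^{(D)})$. Your route needs no Royden decomposition at all, in particular not its order preservation imported from \cite{KLSW17}, and it yields the kernel characterization independently of the maximum principle.

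The price is a fact the paper never states: $hg\in D(Q^{(D)})$ whenever $h\in D(Q^{(N)})\cap\ell^\infty(X)$ and $g\in D(Q^{(D)})\cap\ell^\infty(X)$. You should supply it as follows.
\begin{enumerate}[(1)]
\item Approximate $g$ in form norm by $\psi_k\in C_c(X)$, truncated at $\pm\|g\|_\infty$.
\item Then $h\psi_k\in C_c(X)$ is bounded in $Q^{(N)}_1$-norm by the product rule and converges pointwise to $hg$.
\item Hence $h\psi_k\to hg$ weakly in $(D(Q^{(N)}),Q^{(N)}_1)$, and the closed subspace $D(Q^{(D)})$ is weakly closed.
\end{enumerate}
The convergence $g_\varepsilon\wedge n\to f_-$ also deserves a line. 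Write $f_--g_\varepsilon\wedge n=f_-\wedge\varepsilon+(f_--\varepsilon-n)_+$. Both terms tend to $0$ in form norm by dominated convergence, since each edge term is dominated by $(f_-(x)-f_-(y))^2$.

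Three minor points:
\begin{enumerate}[(a)]
\item The Royden decomposition of $f_-$ at the start of Step~1 is superfluous once the truncation argument is in place.
\item Your fallback of quoting $D(Q^{(D)})\cap\ell^\infty(X)=\ker\gamma_0$ would be circular, since the paper derives it from the very theorem you are proving.
\item In Step~2 only $\varphi_n\geq 0$ is needed. Weak convergence of $(\varphi_n)_+$ to $f_-$ (bounded in form norm and pointwise convergent) already suffices, because $Q^{(N)}_1(\cdot,f)$ is a bounded functional.
\end{enumerate}
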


In order to establish this result, we need two auxiliary lemmas. One is   the maximum principle for functions in $D(Q^{(D)})$, which according to Proposition~\ref{proposition:unbounded extension} vanish on $\partial X$.

\begin{lemm}\label{lemma:positivity of superharmonic functions}
 If $f \in D(Q^{(D)})$ satisfies $\cL f + f \geq 0$, then $f \geq 0$. 
\end{lemm}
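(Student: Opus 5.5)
The plan is to test the inequality $\cL f + f \geq 0$ against the negative part $f_-$ and read off $Q^{(N)}_1(f_-)\le 0$. Since $Q^{(D)}$ is a Dirichlet form, $f_- = (-f)\vee 0$ lies in $D(Q^{(D)})$. The difficulty is that Green's formula has only been stated for test functions in $C_c(X)$, so we first work with finitely supported approximants and then pass to the limit.

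\textbf{Step 1 (a Green inequality on $D(Q^{(D)})$).} We show that
$$\mathcal{Q}(f,g) + \langle f,g\rangle \geq 0 \quad \text{for all } 0\leq g \in D(Q^{(D)}).$$
For $0 \leq \varphi \in C_c(X)$, Green's formula (valid since $f \in \mathcal{D}\subset \mathcal F$) gives
$$\mathcal{Q}(\varphi,f) + \sum_x \varphi f m = \sum_x \varphi(x)\,(\cL f + f)(x)\, m(x) \geq 0.$$
Given $0\le g\in D(Q^{(D)})$, choose $\varphi_n \in C_c(X)$ with $\varphi_n \to g$ in the form norm. The functions $(\varphi_n)_+$ are still finitely supported and nonnegative. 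We then need $(\varphi_n)_+ \to g_+ = g$ in form norm. For Dirichlet forms, normal contractions are continuous with respect to the form norm (a standard fact, e.g.\ from \cite{FOT}). Alternatively, boundedness of $Q_1((\varphi_n)_+)$ yields weak convergence of a subsequence to $g$, and weak convergence already suffices, because $h\mapsto Q_1(f,h)$ is a continuous linear functional on $D(Q^{(N)})$. In either case the inequality passes to the limit.

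\textbf{Step 2 (testing with $g=f_-$).} Apply Step~1 with $g = f_- \in D(Q^{(D)})$, using $f = f_+ - f_-$:
$$0 \leq Q_1(f,f_-) = Q_1(f_+,f_-) - Q_1(f_-).$$
Here $\langle f_+,f_-\rangle = 0$ and the killing part vanishes because $f_+f_-=0$. The edge term of $\mathcal{Q}(f_+,f_-)$ equals
$$\tfrac12\sum b(x,y)(f_+(x)-f_+(y))(f_-(x)-f_-(y)) = -\tfrac12\sum b(x,y)\bigl(f_+(x)f_-(y)+f_+(y)f_-(x)\bigr) \leq 0.$$
Hence $Q_1(f_-) \leq 0$, so $f_-=0$ and $f\ge 0$.

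The main obstacle is the approximation in Step 1, namely ensuring that the test functions stay nonnegative while converging to $g$. The positive-part trick combined with weak compactness handles this, and the rest of the argument is a direct computation.
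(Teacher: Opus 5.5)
Your proposal is correct and follows the paper's argument: test $(\cL+1)f\ge 0$ against nonnegative finitely supported approximants of $f_-$ via Green's formula, pass to the limit, and use $Q^{(D)}_1(f_+,f_-)\le 0$ to force $Q^{(D)}_1(f_-)=0$. The only difference is that your Step~1 justifies the existence of nonnegative approximants in $C_c(X)$, using positive parts together with form-norm continuity or weak compactness, whereas the paper simply asserts that such approximants exist.
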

\begin{proof}
Since $Q^{(D)}$ is a Dirichlet form, we have $f_- \in D(Q^{(D)})$. Hence, we find a sequence of nonnegative function $(g_n)$ in $C_c(X)$ with $g_n \to f_-$ with respect to $Q^{(D)}_1$. Using   $\cL f + f \geq 0$, $Q^{(D)}_1(f_+,f_-) \leq 0$ and Green's formula, we infer
\begin{multline*}
-Q^{(D)}_1(f_-) \geq Q^{(D)}_1(f_+,f_-) - Q^{(D)}_1(f_-) = Q^{(D)}_1(f,f_-)\\=\lim_{n \to \infty} Q^{(D)}_1(f,g_n)= \lim_{n \to \infty} \sum_{x \in X} (\cL  + 1)f(x) g_n(x) m(x)\geq 0.
\end{multline*}
This implies $Q^{(D)}_1(f_-) = 0$, showing $f_- = 0$.
\end{proof}

The following lemma is a slightly modified version of \cite[Lemma~6.6]{Soa}.

\begin{lemm}\label{lemma:does not vanish on remaining part of boundary}
 Let $F \subset \hat X \setminus \partial X$ be closed. Then there exists $g \in D(Q^{(D)}) \cap \ell^\infty(X)$ with $g\geq 0$  such that $g = 1$ on $F$. 
\end{lemm}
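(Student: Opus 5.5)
The plan is a compactness argument combined with the lattice structure of $D(Q^{(D)}) \cap \ell^\infty(X)$.

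\emph{Step 1: local functions.} Fix $x \in F$. Since $x \notin \partial X$, one of two things happens.
\begin{itemize}
\item If $x \in X$, take $g_x = 1_{\{x\}} \in C_c(X) \subset D(Q^{(D)}) \cap \ell^\infty(X)$. It equals $1$ at $x$ and is continuous on $\hat X$.
\item If $x \in \hat X \setminus X$ and $x \notin \partial X$, the definition of $\partial X$ gives some $h \in D(Q^{(D)}) \cap \ell^\infty(X)$ with $h(x) \neq 0$. Replacing $h$ by $|h|$ and then by $|h|/|h|(x)$, we may assume $h \geq 0$ and $h(x) = 1$. Here $|h|$ stays in the domain because $Q^{(D)}$ is a Dirichlet form, and taking the modulus commutes with the extension to $\hat X$ by the lattice property. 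Set $g_x = h$.
\end{itemize}
In both cases continuity gives an open neighbourhood $U_x$ of $x$ in $\hat X$ with $g_x > 1/2$ on $U_x$.

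\emph{Step 2: compactness.} $F$ is a closed subset of the compact space $\hat X$, so it is compact. Finitely many of the sets $U_{x_1}, \dots, U_{x_k}$ therefore cover $F$. Put
$$ \tilde g := 2\,(g_{x_1} \vee \dots \vee g_{x_k}) .$$
This function lies in $D(Q^{(D)}) \cap \ell^\infty(X)$: the domain is a vector space, and $a \vee b = \tfrac12(a+b+|a-b|)$ together with the Markov property keeps us inside it. Its extension is the corresponding maximum of the extensions, because the extension is compatible with lattice operations. Hence $\tilde g \geq 0$ on $\hat X$ and $\tilde g > 1$ on $F$.

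\emph{Step 3: truncation.} Let $g := \tilde g \wedge 1$. This is a normal contraction of $\tilde g$, so $g \in D(Q^{(D)}) \cap \ell^\infty(X)$. It satisfies $g \geq 0$ and, again by compatibility of the extension with $\wedge$, $g = 1$ on $F$.

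The only delicate point is Step 1 together with the extension identities. There I need that $\widehat{|h|} = |\hat h|$ and $\widehat{f \vee g} = \hat f \vee \hat g$ on $\hat X$. Both follow from density of $X$ and continuity, since the two sides agree on $X$. I also need that $D(Q^{(D)}) \cap \ell^\infty(X)$ is stable under the lattice operations, which holds because $Q^{(D)}$ is a Dirichlet form.
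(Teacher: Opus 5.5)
Your proof is correct and follows essentially the same route as the paper. For each point of $F$ you take a nonnegative function in $D(Q^{(D)})\cap\ell^\infty(X)$ that is nonzero there, pass to a finite subcover by compactness of $F$, combine the finitely many functions, and truncate with $\cdot\wedge 1$. The only cosmetic difference is that you normalize each $g_x$ beforehand and combine them by a maximum. The paper instead sums the finitely many functions and then rescales, using compactness a second time (the sum is positive and continuous, so it has a positive minimum on $F$).
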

\begin{proof}
 Since $\hat X$ is compact, the set $F$ is also compact. By the definition of the $1$-harmonic boundary $\partial X$, for each $x \in F \setminus X $ there exists $g_x \in D(Q^{(D)}) \cap \ell^\infty(X)$ with $g_x(x) \neq 0$.  For each $x\in X$ there also exist such an $g_x$.  Using the contraction property of $Q_0$, we can assume $g_x \geq 0$. The family $\{g_x > 0\}$, $x \in F$, is an open cover of $F$. Hence, there exist finitely many $x_1,\ldots,x_n \in F$ such that  $g = \sum_{i = 1}^n g_{x_i} > 0$   on $F$. Using the compactness of $F$ again, we may rescale $g$ and assume $g \geq 1$ on $F$. The contraction property of $Q_0$ yields that $g\wedge 1 \in D(Q^{(D)}) \cap \ell^\infty(X)$ and it satisfies $g \wedge 1 = 1$ on $F$.  
\end{proof}

\begin{proof}[Proof of the maximum principle] With the help of the Royden decomposition, we write $f = f_h + f_0$ with a $1$-harmonic $f_h \in D(Q^{(N)})$ and $f_0 \in D(Q^{(D)}) $. Since $\cL f_0 + f_0 = \cL f + f \geq 0$, we infer $f_0 \geq 0$ from Lemma~\ref{lemma:positivity of superharmonic functions}. Moreover, since we assumed $ f\ge 0 $ and Proposition~\ref{proposition:unbounded extension} shows $f_0 = 0$ on $\partial X$, we infer $f_h \geq 0$ on $\partial  X$. Hence, it suffices to show the statement for $f_h$.

For   $n \in \N$ let $f_n = f \vee (-n)$. We show that the functions $(f_n)_h$ are nonnegative. Since $f_n \to f$, $n \to \infty$, with respect to $\av{\cdot}_{Q^{(N)}}$ and since the Royden decomposition is continuous, this implies $f_h \geq 0$.

Assume that $(f_n)_h$ is not nonnegative. Then, for all $\varepsilon > 0$ small enough, we have $F = \{x \in \hat X \mid (f_n)_h(x) \leq -\varepsilon\} \neq \emptyset$. By assumption we have $f \geq 0$ on $\partial X$. Since $f_n =  (f_n)_h$ on $\partial X$ and $f_n \geq f$ on $\partial X$, we infer $(f_n)_h \geq 0$ on $\partial X$, which implies $F \subset \hat X \setminus \partial X$. Using the previous lemma, we find $g \in D(Q^{(D)}) \cap \ell^\infty(X)$ with $g = 1$ on $F$.

Since  $(f_n)_h \geq -n$ (use Proposition~\ref{proposition:royden} and $f_n \geq -n$), we infer $(f_n)_h + n g \geq -\varepsilon$. Since $(f_n)_h$ is $1$-harmonic and $g \in D(Q^{(D)})$,  Proposition~\ref{proposition:royden} implies $-\varepsilon \leq ((f_n)_h + n g)_h = (f_n)_h$. Letting $\varepsilon \to 0+$ yields the claim.
%
%
%
%
%
\end{proof}

Next, we gather various consequences of the maximum principle.

\begin{corol}[Comparison principle]
 Assume that $f,g \in D(Q^{(N)})$ satisfy $\cL f + f \leq 0 \leq \cL g + g$ and $f \leq g$ on $\partial X$. Then $f \leq g$ on $X$. 
\end{corol}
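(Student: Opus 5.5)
Apply the maximum principle just established to the difference $h := g - f$. Since $f,g \in D(Q^{(N)})$ and $D(Q^{(N)})$ is a vector space, $h \in D(Q^{(N)})$. By linearity of the formal Laplacian on $\mathcal F \supset D(Q^{(N)})$,
$$\cL h + h = (\cL g + g) - (\cL f + f) \geq 0 \quad \text{on } X.$$
Once we know $h \geq 0$ on $\partial X$, the maximum principle gives $h \geq 0$ on $X$, i.e.\ $f \leq g$.

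The only point requiring care is the boundary condition, since traces of elements of $D(Q^{(N)})$ take values in $\overline{\R}$. The expression $\gamma(g) - \gamma(f)$ may therefore be undefined where both traces are $+\infty$ (or both $-\infty$). We avoid subtracting traces and instead reduce to bounded functions via truncation.

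For $n \in \N$, set $g_n := (g \wedge n)\vee(-n)$ and $f_n := (f\wedge n)\vee(-n)$. These belong to $D(Q^{(N)}) \cap \ell^\infty(X)$, and their continuous extensions to $\hat X$ are real valued. By compatibility of the trace with monotone normal contractions, extended to $\pm\infty$,
$$\gamma(g_n) = (\gamma(g)\wedge n)\vee(-n) \quad\text{and}\quad \gamma(f_n) = (\gamma(f)\wedge n)\vee(-n).$$
The map $t \mapsto (t\wedge n)\vee(-n)$ is monotone on $\overline{\R}$, so $\gamma(f) \leq \gamma(g)$ gives $\gamma(f_n) \leq \gamma(g_n)$. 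On the bounded functions the trace is linear, so $\gamma(g_n - f_n) \geq 0$.

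Now use the Royden decomposition to transfer this to $h$. Write $h = h_h + h_0$ with $h_0 \in D(Q^{(D)})$ and $h_h$ $1$-harmonic. The function $h_0$ satisfies $\cL h_0 + h_0 = \cL h + h \geq 0$, so Lemma~\ref{lemma:positivity of superharmonic functions} gives $h_0 \geq 0$. It therefore suffices to show $h_h \geq 0$.

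Since $g_n - f_n \to g - f = h$ in $Q^{(N)}$-norm (truncation converges in form norm) and $k \mapsto k_h$ is continuous, we get $(g_n - f_n)_h \to h_h$ in form norm, hence pointwise on $X$. Each $(g_n - f_n)_h$ lies in $D(Q^{(N)})$, is $1$-harmonic, and is bounded by Proposition~\ref{proposition:royden}(b). On $\partial X$ it coincides with $g_n - f_n$, because the $D(Q^{(D)})$-part has vanishing trace, so it is $\geq 0$ there. The maximum principle then gives $(g_n - f_n)_h \geq 0$ on $X$. Passing to the limit yields $h_h \geq 0$, hence $h \geq 0$.

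The main obstacle is the $\pm\infty$ bookkeeping in the boundary inequality; truncation combined with continuity of the Royden projection resolves it.
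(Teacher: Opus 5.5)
Your proof is correct and follows the paper's idea: apply the maximum principle to $g-f$, which is $1$-superharmonic. The paper's proof is only this one line, together with the assertion that $g-f\ge 0$ on $\partial X$. You justify that boundary inequality, and the justification is needed. The paper records $\gamma(f+g)=\gamma(f)+\gamma(g)$ only when one of the two traces is real-valued. So at points of $\partial X$ where $\gamma(f)=\gamma(g)=+\infty$ (or both equal $-\infty$), the hypothesis $\gamma(f)\le\gamma(g)$ does not directly give $\gamma(g-f)\ge 0$. This situation is covered by the paper's later use of the comparison principle. In part (a) of the proposition on compatibility of $H$ with operations, it is applied to two $1$-harmonic functions with the same trace $C\varphi$, which may take the values $\pm\infty$ when $\varphi$ is unbounded and $C$ is monotone. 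Your route runs as follows: truncate to get real-valued traces with $\gamma(g_n-f_n)\ge 0$; apply the maximum principle to the bounded $1$-harmonic parts $(g_n-f_n)_h$; pass to the limit using form-norm convergence of truncations and continuity of $k\mapsto k_h$; handle $h_0$ by the lemma on positivity in $D(Q^{(D)})$. This is essentially the same approximation already used inside the paper's proof of the maximum principle, and every step checks out. Alternatively, once you have $h_h\ge 0$ on $X$, you get $\gamma(h)=\gamma(h_h)\ge 0$ and could apply the maximum principle to $h$ directly. The paper's version buys brevity; yours closes the gap at infinite boundary values.
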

\begin{proof}
The function $g - f$ is $1$-superharmonic and $g - f \geq 0$ on $\partial X$. Hence, the maximum principle implies $g - f \geq 0$ on $X$. 
\end{proof}

\begin{corol}[Kernel of $\gamma$] \label{coro:kernel gamma}
%
$\ker \gamma = D(Q^{(D)})$.
\end{corol}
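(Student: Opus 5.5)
The inclusion $D(Q^{(D)}) \subset \ker \gamma$ is already contained in Proposition~\ref{proposition:unbounded extension}, which gives $\hat f = 0$ on $\partial X$ for every $f \in D(Q^{(D)})$. It therefore remains to show the reverse inclusion $\ker \gamma \subset D(Q^{(D)})$. The plan is to combine the Royden decomposition with the comparison principle.

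Let $f \in D(Q^{(N)})$ with $\gamma(f) = 0$. By Proposition~\ref{proposition:royden}, write $f = f_h + f_0$ with $f_0 \in D(Q^{(D)})$ and $f_h$ a $1$-harmonic function in $D(Q^{(N)})$. Since $f_0$ vanishes on $\partial X$ by Proposition~\ref{proposition:unbounded extension}, we first want to conclude $\gamma(f_h) = 0$. Because $\gamma(f_0) = 0$ is real-valued, the linearity statement for the trace map applies and gives $\gamma(f_h) = \gamma(f) - \gamma(f_0) = 0$.

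Next apply the comparison principle twice, to the pair $(f_h, 0)$ and to the pair $(0, f_h)$. Both functions are $1$-harmonic, so they are simultaneously $1$-sub- and $1$-superharmonic, and they agree on $\partial X$. The comparison principle (equivalently, the maximum principle applied to $f_h$ and to $-f_h$) therefore yields $f_h \le 0$ and $f_h \ge 0$ on $X$. Hence $f_h = 0$, and $f = f_0 \in D(Q^{(D)})$.

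The only delicate point is justifying that the trace is additive for the decomposition, since traces may take the values $\pm\infty$. This is handled because $\gamma(f_0) = 0$ is finite everywhere on $\partial X$, which is exactly the hypothesis under which the paper's linearity statement for $\gamma$ holds.
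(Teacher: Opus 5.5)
Your proof is correct and follows the paper's argument. The inclusion $D(Q^{(D)})\subset\ker\gamma$ comes from Proposition~\ref{proposition:unbounded extension}, and the reverse inclusion comes from the Royden decomposition together with the maximum principle applied to the $1$-harmonic part. You make explicit two points the paper leaves implicit: additivity of $\gamma$ applies because $\gamma(f_0)=0$ is finite, and the maximum principle must be used for both $f_h$ and $-f_h$.
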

\begin{proof} Proposition~\ref{proposition:unbounded extension} shows  
$$D(Q^{(D)})  \subset \{f \in D(Q^{(N)}) \mid f(z) = 0 \text{ for all } z \in  \partial X\} =\ker \gamma. $$
For the opposite inclusion, we let $f \in D(Q^{(N)})$ with $f = 0$ on $\partial X$ be given. By the Royden decomposition we can write $f = f_0 + f_h$ with $f_0 \in D(Q^{(D)})$ and $1$-harmonic $f \in D(Q^{(N)})$. Since $f_0 = 0$ on $\partial X$, we infer also $f_h = 0$ on $\partial X$. Hence, the maximum principle implies $f_h = 0$ and we obtain $f = f_0 \in D(Q^{(D)})$.  
%
%
%
%
\end{proof}

\begin{corol}[Dirichlet problem and $H\varphi$]
For any $\varphi \in {\rm ran}\, \gamma$, there exists a unique function $H\varphi \in D(Q^{(N)})$ solving the Dirichlet problem
$$\begin{cases}
   \cL H\varphi  + H \varphi  = 0 &\text{on } X,\\
   H\varphi  = \varphi &\text{on } \partial X.
  \end{cases}
$$
If $\varphi \geq 0$, then $H\varphi \geq 0$ and if $\varphi \leq 1$, then $H \varphi \leq 1$. 
\end{corol}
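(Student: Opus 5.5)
Since $\varphi \in \ran \gamma$, we fix some $f \in D(Q^{(N)})$ with $\gamma(f) = \varphi$ and apply the Royden decomposition (Proposition~\ref{proposition:royden}) to write $f = f_h + f_0$, where $f_0 \in D(Q^{(D)})$ and $f_h$ is $1$-harmonic. We then set $H\varphi := f_h$. By definition $\cL H\varphi + H\varphi = 0$ on $X$. For the boundary condition we use Proposition~\ref{proposition:unbounded extension}: since $f_0 = 0$ on $\partial X$, the compatibility of $\gamma$ with sums (which applies because $\gamma(f_0) = 0$ is real-valued) gives $\gamma(f_h) = \gamma(f) - \gamma(f_0) = \varphi$. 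This settles existence.

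For uniqueness, suppose $u, v \in D(Q^{(N)})$ both solve the problem. Then $u - v \in D(Q^{(N)})$ is $1$-harmonic. Its trace is $\gamma(u) - \gamma(v)$, and this needs care where $\varphi = \pm\infty$. To avoid that issue, we argue via the kernel of $\gamma$ instead of subtracting boundary values directly. The sum rule, applied with the real-valued side being $\gamma(v - u)$ once it is known to be finite, suggests the following argument. Apply the comparison principle to the pair $(u,v)$: we have $\cL u + u \le 0 \le \cL v + v$ and $u = v$ on $\partial X$ pointwise as functions into $\overline{\R}$. Since the comparison principle is derived from the maximum principle applied to $v - u$, we need $v - u \ge 0$ on $\partial X$, and we take the value at points where both equal $\pm\infty$ to be determined by the continuous extension $\widehat{v-u}$. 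I expect this to be the main obstacle. I would handle it with truncations. For every $n$ we have $\gamma\big((u\wedge n)\vee(-n)\big) = (\varphi \wedge n)\vee(-n) = \gamma\big((v\wedge n)\vee(-n)\big)$, so the difference of the truncations lies in $\ker\gamma = D(Q^{(D)})$ by Corollary~\ref{coro:kernel gamma}. The truncations converge to $u$ and $v$ in form norm, and $D(Q^{(D)})$ is closed, so $u - v \in D(Q^{(D)})$. Uniqueness of the Royden decomposition of the $1$-harmonic function $u - v$ (its $D(Q^{(D)})$-part must be both $u-v$ and $0$) then gives $u = v$. The same argument also shows that $H\varphi$ does not depend on the choice of $f$.

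For the order properties, suppose $\varphi \ge 0$. Choosing $f$ with $\gamma(f) = \varphi$, we get $H\varphi \ge 0$ on $\partial X$ and $\cL H\varphi + H\varphi = 0 \ge 0$, so the maximum principle yields $H\varphi \ge 0$. If $\varphi \le 1$, we instead compare $H\varphi$ with $g = 1 \wedge$ (something) in $D(Q^{(N)})$. The constant $1$ need not lie in $\ell^2$, so we use Proposition~\ref{proposition:royden}(b) directly. Replace $f$ by $f \wedge 1$: by compatibility of $\gamma$ with monotone normal contractions, $\gamma(f\wedge 1) = \varphi \wedge 1 = \varphi$, so by uniqueness $H\varphi = (f\wedge 1)_h$. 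Then Proposition~\ref{proposition:royden}(b) with $\alpha = -\infty$ and $\beta = 1$ gives $H\varphi \le 1$. Analogously, $\varphi \ge 0$ can also be obtained by replacing $f$ with $f_+$ and using (b) with $\alpha = 0$.
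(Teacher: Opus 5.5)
Your proof is correct. Existence and the nonnegativity statement match the paper, which obtains them from the Royden decomposition and the maximum principle. You depart from the paper in two places.

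\textbf{Uniqueness.} The paper says only that it follows from the maximum principle. You instead show $u-v\in\ker\gamma=D(Q^{(D)})$ by truncating with $(t\wedge n)\vee(-n)$. This uses compatibility of $\gamma$ with monotone normal contractions, Corollary~\ref{coro:kernel gamma}, form-norm convergence of truncations and closedness of $D(Q^{(D)})$. You then conclude by uniqueness of the Royden decomposition.

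\textbf{The bound $H\varphi\le 1$.} The paper applies the comparison principle with $g=1$. You replace $f$ by $f\wedge 1$ and apply Proposition~\ref{proposition:royden}(b).

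\textbf{What each route buys.} The paper's version is shorter and reduces everything to the maximum principle. Your argument still uses that principle implicitly, since Corollary~\ref{coro:kernel gamma} is proved with it. Yours adds rigor at exactly the two points where the one-line argument is thin:
\begin{enumerate}[(a)]
\item Applying the maximum principle to $u-v$ requires that the continuous extension of $u-v$ vanish on $\partial X$. This is not automatic at points where $\varphi=\pm\infty$, because there one only knows $\hat u=\hat v=\pm\infty$.
\item The comparison principle as stated requires $g=1\in D(Q^{(N)})$, which holds only if $\sum_{x\in X}(c(x)+m(x))<\infty$. Proposition~\ref{proposition:royden}(b) has no such restriction.
\end{enumerate}

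In a final write-up, delete the exploratory sentences about applying the comparison principle to $(u,v)$ and comparing with ``$1\wedge$ (something)''. Only the truncation argument and the $f\wedge 1$ argument are actually used.
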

\begin{proof}
 Existence follows from the Royden decomposition and $\gamma(f) = \gamma(f_h)$. Uniqueness and the nonnegativity statement follow from the maximum principle. That $\varphi \leq 1$ implies $H\varphi \leq 1$ is a consequence of the comparison principle applied to the $1$-harmonic function $f = H \varphi$ and the $1$-superharmonic function $g = 1$.
\end{proof}

For $\varphi \in \ran \gamma$,   the unique solution to the Dirichlet problem  $H \varphi$ is called {\em $1$-harmonic extension of $\varphi$}. By definition it satisfies 
$$\gamma (H\varphi) = \varphi.$$

The $1$-harmonic extension is compatible with a variety of algebraic operations on $C(\partial X,\overline \R)$ (modulo $D(Q^{(D)})$). This is discussed in the next proposition.

\begin{propo}\label{proposition:harmonic extension and operations} Let $\varphi,\psi \in \ran \gamma$.
 \begin{enumerate}[(a)]
  \item If $C \colon \R \to \R$ is a monotone normal contraction, then $C\varphi \in \ran \gamma$ and  $C(H\varphi) - H (C\varphi) \in D(Q^{(D)})$ (where we tacitly extend $C$ to a map $\overline \R \to   \overline \R$ as discussed above). If $\varphi$ is bounded, then the statement holds for all normal contractions.
  \item $\varphi \wedge \psi \in \ran \gamma$ and $H (\varphi \wedge \psi) - (H \varphi) \wedge (H \psi) \in D(Q^{(D)})$.
  \item If $\varphi,\psi \in C(\partial X,\R)$, then $\varphi \psi \in \ran \gamma$ and $H(\varphi \psi) - (H\varphi) (H \psi) \in D(Q^{(D)})$.
 \end{enumerate}
\end{propo}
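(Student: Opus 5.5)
The strategy is to reduce each statement to known facts about $D(Q^{(N)})$ and then use two tools. First, $\gamma$ commutes with the relevant operation, so the operation applied to traces is again a trace. Second, Corollary~\ref{coro:kernel gamma} ($\ker\gamma = D(Q^{(D)})$) handles the differences. The guiding observation is this: if $f,g\in D(Q^{(N)})$ satisfy $\gamma(f)=\gamma(g)$, with both traces real-valued or with the difference well defined, then $f-g\in\ker\gamma = D(Q^{(D)})$. Moreover $H\gamma(f) = f_h$ from the Royden decomposition, so $f - H\gamma(f) = f_0 \in D(Q^{(D)})$.

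For (a), let $C$ be a monotone normal contraction and $\varphi=\gamma(f)$ with $f = H\varphi$. Then $C(H\varphi)\in D(Q^{(N)})$, because $Q^{(N)}$ is a Dirichlet form. The compatibility of $\gamma$ with monotone normal contractions gives $\gamma(C(H\varphi)) = C(\gamma(H\varphi)) = C\varphi$, so $C\varphi\in\ran\gamma$.

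Next, $H(C\varphi)$ and $C(H\varphi)$ have the same trace $C\varphi$. Their difference $g := C(H\varphi)-H(C\varphi)$ lies in $D(Q^{(N)})$, and we need $\gamma(g) = 0$. The obstacle here is that $C\varphi$ may take the values $\pm\infty$, so linearity of $\gamma$ is only available when one summand has real trace. To handle this, I would truncate: apply the statement to $g_n := (C(H\varphi))_n - (H(C\varphi))_n$, where $(\cdot)_n = (\cdot\wedge n)\vee(-n)$. Their traces are the real-valued functions $(C\varphi)_n$, so $g_n\in D(Q^{(D)})$. Alternatively one can argue with the Royden decomposition: $C(H\varphi) = (C(H\varphi))_h + (C(H\varphi))_0$, and by the uniqueness in the Dirichlet problem $(C(H\varphi))_h = H(C\varphi)$. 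I prefer this route, since it avoids infinite values. We get $\gamma((C(H\varphi))_h) = \gamma(C(H\varphi)) = C\varphi$ because the $D(Q^{(D)})$-part has trace $0$ by Proposition~\ref{proposition:unbounded extension}, and adding a function with zero trace is legitimate. Uniqueness of $H$ then gives $(C(H\varphi))_h = H(C\varphi)$, hence $C(H\varphi)-H(C\varphi) = (C(H\varphi))_0\in D(Q^{(D)})$.

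For bounded $\varphi$ and a general normal contraction, I would first replace $H\varphi$ by a bounded representative. By the Dirichlet problem corollary, $H\varphi$ is bounded by $\|\varphi\|_\infty$ (apply the bounds $H\varphi\le1$ and $H\varphi\ge0$ after shifting and scaling, or use Proposition~\ref{proposition:royden}(b)). Then $\gamma$ restricted to bounded functions is $\gamma_0$, which is plain restriction of continuous extensions to $\partial X$. I still need to check that $C\circ \hat f = \widehat{C\circ f}$ for continuous $C$. This holds because both sides are continuous on $\hat X$, agree on the dense set $X$, and $\hat{}$ is given by the Gelfand extension. The same Royden argument then applies.

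Part (b) follows from (a)-type reasoning with lattice operations. Since $\gamma$ is a lattice homomorphism, $\gamma((H\varphi)\wedge(H\psi)) = \varphi\wedge\psi$, so $\varphi\wedge\psi\in\ran\gamma$. The Royden argument then gives $(H\varphi)\wedge(H\psi) - H(\varphi\wedge\psi) = ((H\varphi)\wedge(H\psi))_0\in D(Q^{(D)})$.

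For (c), with $\varphi,\psi$ real-valued, the functions $H\varphi$ and $H\psi$ are bounded, as above. Then $D(Q^{(N)})\cap\ell^\infty(X)$ is an algebra, so $(H\varphi)(H\psi)\in D(Q^{(N)})$, and $\gamma_0$ is an algebra homomorphism, giving $\gamma((H\varphi)(H\psi)) = \varphi\psi$. The Royden argument then finishes the proof. The main point to verify carefully is the boundedness of $H\varphi$ for bounded $\varphi$, so that all extensions are the Gelfand ones; the step with infinite values in (a) is settled by the Royden decomposition rather than by linearity of $\gamma$.
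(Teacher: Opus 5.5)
Your proposal is correct and is essentially the paper's proof. In each part you Royden-decompose the operated function ($C(H\varphi)$, $(H\varphi)\wedge(H\psi)$ or $(H\varphi)(H\psi)$), use that $\gamma$ commutes with the operation and vanishes on the $D(Q^{(D)})$-part, and identify the $1$-harmonic part with $H$ of the operated trace by uniqueness of the Dirichlet problem; for general normal contractions and for products you also use the boundedness of $H\varphi$ and the algebra structure of $D(Q^{(N)})\cap\ell^\infty(X)$. The only slip is your suggestion to bound $H\varphi$ by shifting $\varphi$, which is not available when $1\notin D(Q^{(N)})$; your alternative works: apply Proposition~\ref{proposition:royden}(b) to the truncation $(f\wedge M)\vee(-M)$ of a representative $f$ with $\gamma f=\varphi$, where $M=\sup|\varphi|$.
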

\begin{proof}
(a) Let $\varphi = \gamma(f)$ with $f \in D(Q^{(N)})$.  As discussed above, for a monotone normal contraction, we have $C(\gamma(f)) = \gamma (C(f))$, showing $C \varphi \in \ran \gamma$. We use the Royden decomposition to write  $C(H \varphi) = g_0 + g_h$ with $g_0 \in D(Q^{(D)})$ and $1$-harmonic $g_h \in D(Q^{(N)})$. Then, 
$$\gamma(g_h) = \gamma(C(H \varphi)) = C(\gamma(H\varphi)) = C \varphi = \gamma (H (C\varphi))$$
and the comparison principle imply $g_h = H (C\varphi)$, showing $C(H \varphi) - H (C\varphi) = g_0 \in D(Q^{(D)})$. For bounded $\varphi$, we can choose $f = H\varphi$, which is bounded. Then the same arguments as above yield the statement for arbitrary normal contractions.

(b) This can be inferred as in (a) using that $\gamma$ is a lattice homomorphism.

(c) Since $\partial X$ is compact (as a closed set in the compact space $\hat X$), the assumption implies that $\varphi,\psi$ are bounded. By the comparison principle also $H \varphi, H \psi$ are bounded. Since $D(Q^{(N)})\cap \ell^\infty(X)$ is an algebra, we obtain $(H \varphi)(H\psi) \in D(Q^{(N)}) \cap \ell^\infty(X)$ and we clearly have 
$$\gamma((H \varphi)(H\psi)) = \gamma(H \varphi) \gamma(H\psi) = \varphi \psi. $$
With this at hand the statement on the difference can be inferred as in (a). 
\end{proof}

\begin{rem}
 This proposition can be understood as saying that certain operations (lattice operations, compatibility with monotone normal contractions, multiplication) can be extended to the quotient space $ D(Q^{(N)})/D(Q^{(D)})$, which by the Royden decomposition theorem equals the space of  $1$-harmonic functions in $D(Q^{(N)})$.
\end{rem}

Before we continue, we have to discuss the range of $\gamma_0$, where we have to distinguish between the cases $1 \in D(Q^{(N)})$ and $1 \not \in D(Q^{(N)})$. Obviously, $1 \in D(Q^{(N)})$ if and only if 
$\sum_{x \in X} (c(x) + m(x)) < \infty.$
\begin{lemm}[Range of $\gamma_0$]\label{lemma:range of gamma}
 \begin{enumerate}[(a)]
  \item If $1 \in D(Q^{(N)})$, then ${\rm ran}\, \gamma_0$ is dense in $C(\partial X)$.
  \item  If $1 \not \in D(Q^{(N)})$, then there exists a point $\infty \in \partial X$ such that ${\rm ran}\, \gamma_0$ is dense in $\{f \in C(\partial X) \mid f(\infty)  = 0\}$, which is isometrically isomorphic to $C_0(\partial X \setminus \{\infty\})$.
 \end{enumerate}
\end{lemm}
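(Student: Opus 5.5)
The natural tool is the Stone--Weierstraß theorem on the compact Hausdorff space $\partial X$. Let $\mathcal A := D(Q^{(N)}) \cap \ell^\infty(X)$. This is an algebra, closed under lattice operations, so $\ran \gamma_0 = \gamma_0(\mathcal A)$ is a subalgebra and a sublattice of $C(\partial X)$, because $\gamma_0$ is an algebra and lattice homomorphism. By property (c) of the compactification $\hat X$, the functions $\hat f$, $f \in \mathcal A$, separate the points of $\hat X$. In particular their restrictions separate the points of $\partial X$, so $\ran \gamma_0$ separates the points of $\partial X$.

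\emph{Case (a).} If $1 \in D(Q^{(N)})$, then $1 \in \mathcal A$ and $\gamma_0(1) = 1$. So $\ran\gamma_0$ is a point-separating unital subalgebra of $C(\partial X)$, and the Stone--Weierstraß theorem gives density at once.

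\emph{Case (b).} Assume $1 \notin D(Q^{(N)})$. By the locally compact version of Stone--Weierstraß (equivalently, the non-unital version on a compact space), a point-separating subalgebra $\mathcal B$ of $C(\partial X)$ is either dense or dense in $\{f \mid f(z_0) = 0\}$ for a single point $z_0$ at which all elements of $\mathcal B$ vanish. So two things must be shown: there is a common zero $\infty \in \partial X$ of all $\gamma_0(f)$, $f \in \mathcal A$, and this common zero is unique. Uniqueness is automatic from point separation, since two common zeros could not be separated. Once $\infty$ exists, the isometric isomorphism with $C_0(\partial X \setminus\{\infty\})$ is standard: restrict to $\partial X\setminus\{\infty\}$.

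The main obstacle is existence of the common zero. I would argue by contradiction. Suppose that for every $z \in \partial X$ there is $f_z \in \mathcal A$ with $f_z(z) \neq 0$. Replacing $f_z$ by $|f_z|$, which is allowed by the contraction property, we may take $f_z \geq 0$. Compactness of $\partial X$ gives finitely many of these whose sum $g \in \mathcal A$ satisfies $g > 0$ on $\partial X$, and hence, after rescaling, $g \geq 1$ on $\partial X$. The set $F = \{x \in \hat X \mid g(x) \leq 1/2\}$ is closed and contained in $\hat X\setminus\partial X$. Lemma~\ref{lemma:does not vanish on remaining part of boundary} then yields $h \in D(Q^{(D)}) \cap \ell^\infty(X)$ with $h \geq 0$ and $h = 1$ on $F$. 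Now $2g + h \geq 1$ on all of $X$, and $2g + h \in D(Q^{(N)})$. Since $Q^{(N)}$ is Markovian, the normal contraction $t \mapsto t \wedge 1$ gives $(2g+h)\wedge 1 = 1 \in D(Q^{(N)})$. This contradicts $1 \notin D(Q^{(N)})$, so a common zero exists.

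I expect the only subtle point to be this last construction. In particular, one has to be careful that $g$ and $h$ together cover $X$, and not just $\partial X$, and that the contraction $t\wedge 1$ is applied within $D(Q^{(N)})$, where it is legitimate because $Q^{(N)}$ is a Dirichlet form.
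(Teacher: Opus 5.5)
Your proof is correct and takes essentially the paper's approach. You use Stone--Weierstraß in both cases, and in (b) you argue by contradiction: you build an element of $D(Q^{(N)})$ that is $\geq 1$ on $X$ and truncate it at $1$. The only difference is cosmetic: the paper covers all of $\hat X$ pointwise, invoking Lemma~\ref{lemma:does not vanish on remaining part of boundary} at points of $\hat X\setminus\partial X$, and takes a maximum, whereas you first cover $\partial X$, then apply that lemma once to the closed set $\{g\leq 1/2\}$, and also spell out the point-separation and non-unital Stone--Weierstraß details the paper leaves implicit.
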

\begin{proof}
 (a) This follows directly from the Stone-Weierstraß theorem. 
 
 (b) This follows directly from the Stone-Weierstraß theorem once we show the existence of $\infty  \in \partial X$ such that $f(\infty) = 0$ for all $f \in \ran \gamma_0$. Assume on the contrary that for all $x \in \partial X$ there exists $f_x \in D(Q^{(N)}) \cap \ell^\infty(X)$ such that $f_x(x) \neq 0$. Using Lemma~\ref{lemma:does not vanish on remaining part of boundary}, for each $x \in \hat X \setminus \partial X$ we find $f_{x} \in D(Q^{(N)}) \cap \ell^\infty(X)$ with $f_x(x) \neq 0$. Since $Q^{(N)}$ is a Dirichlet form, we can assume $f_x \geq 0$ with $f_x(x) > 1$ for all $x \in \hat X$ (else consider a scaled version of $|f_x|$). Using continuity, $U_x = \{y \in \hat X \mid f_x(y) > 1\}$, $x \in \hat X$, is an open cover of $\hat X$. By compactness of $\hat X$ it has a finite subcover $U_{x_1},\ldots,U_{x_n}$. We consider the function $f = \max\{f_{x_1},\ldots,f_{x_n}\}$. Since $Q^{(N)}$ is a Dirichlet form, we have $f \in D(Q^{(N)}) \cap \ell^\infty(X)$ and by construction it satisfies $f > 1$ on $\hat X$. Using again that $Q^{(N)}$ is a Dirichlet form, we infer $1 = f \wedge 1 \in D(Q^{(N)})$, a contradiction.  
\end{proof}

%

\begin{propo}[Harmonic measures]
 For each $x \in X$, there exists a Radon measure $\mu_x$  on $\partial X$ with $\mu_x(\partial X) \leq 1$  such that 
 $$H\varphi(x) = \int_{\partial X} \varphi \, d\mu_x, \quad \varphi \in {\rm ran}\, \gamma_0.$$
 If $1 \in D(Q^{(N)})$, then $\mu_x$ is unique and if $1 \not \in D(Q^{(N)})$, then $\mu_x$ is unique if we additionally assume $\mu_x(\{\infty\}) = 0$. Moreover, for $x,y \in X$ the measures $\mu_x$ and $\mu_y$ are mutually absolutely continuous with bounded Radon-Nikodym density.
\end{propo}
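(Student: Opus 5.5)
The plan is a Riesz representation argument. Fix $x\in X$ and consider the linear functional $\ell_x\colon \ran\gamma_0\to\R$, $\ell_x(\varphi) := H\varphi(x)$. This is well defined and linear: $\ran\gamma_0$ is a vector space because $\gamma_0$ is linear, $H$ is linear on it, and uniqueness in the Dirichlet problem corollary gives $H(\varphi+\psi)=H\varphi+H\psi$. Positivity of $\ell_x$ is the statement $\varphi\ge0\Rightarrow H\varphi\ge 0$ from that corollary.

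For boundedness in the supremum norm I use the comparison principle. For $\varphi\in\ran\gamma_0$ we have $|\varphi|\le\|\varphi\|_\infty$. The function $\|\varphi\|_\infty\cdot 1$ is $1$-superharmonic, but it need not lie in $D(Q^{(N)})$, so I argue with the contraction trick instead. Since $\varphi/\|\varphi\|_\infty \le 1$, the Dirichlet problem corollary gives $H(\varphi/\|\varphi\|_\infty)\le 1$. The same applies to $-\varphi$. Hence $|\ell_x(\varphi)|\le\|\varphi\|_\infty$.

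By Lemma~\ref{lemma:range of gamma}, $\ran\gamma_0$ is dense in $C(\partial X)$ in case (a), and dense in $C_0(\partial X\setminus\{\infty\})$ in case (b). So $\ell_x$ extends uniquely to a bounded linear functional of norm at most $1$ on that space. Positivity survives the extension: a nonnegative $\varphi$ in the closure is the uniform limit of $(\varphi_n)_+$, where $\varphi_n\in\ran\gamma_0$ approximate $\varphi$, and $(\varphi_n)_+\in\ran\gamma_0$ since $\gamma_0$ is a lattice homomorphism. The Riesz–Markov theorem on the compact space $\partial X$, respectively the locally compact space $\partial X\setminus\{\infty\}$, then produces a unique positive Radon measure of total mass $\le 1$ representing $\ell_x$. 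In case (b) I regard it as a measure on $\partial X$ with $\mu_x(\{\infty\})=0$. Integrals of elements of $\ran\gamma_0$ are unaffected by this, because these functions vanish at $\infty$.

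Uniqueness follows from density. Two Radon measures with the same integrals on a dense subspace of $C(\partial X)$ agree, and in case (b) the same holds on $C_0(\partial X\setminus\{\infty\})$, given the normalization at $\infty$.

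The main obstacle is mutual absolute continuity with bounded density. I plan a Harnack-type inequality: for $x\sim y$ and nonnegative $1$-harmonic $u$,
\[
(\deg(y)+c(y)+m(y))\,u(y)=\sum_z b(y,z)u(z)\ge b(y,x)u(x),
\]
so $u(x)\le C_{x,y}u(y)$ with $C_{x,y}=(\deg(y)+c(y)+m(y))/b(x,y)$. Chaining this along a path gives the same inequality for any $x,y$ in one connected component, with a constant independent of $u$. Applying it to $u=H\varphi$ for $\varphi\ge0$ in $\ran\gamma_0$ and passing to the extension by density yields $\mu_x\le C_{x,y}\mu_y$ as positive functionals, hence as measures. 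Radon–Nikodym then gives a density bounded by $C_{x,y}$, and the symmetric argument gives the reverse inequality. A subtlety is connectedness. If the graph is disconnected the claim as stated may fail across components, so I expect to need connectedness implicitly, and I would check whether the paper assumes it.
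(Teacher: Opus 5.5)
Your proposal is correct and follows the standard argument that the paper itself only cites (\cite[Proposition~2.1]{KLSS19}, \cite[Section~VI.4]{Soa}). That argument applies Riesz--Markov to the positive functional $\varphi\mapsto H\varphi(x)$ of norm at most $1$ on the dense range of $\gamma_0$, gets uniqueness from density, and obtains the bounded densities from a one-step Harnack inequality chained along paths. Your connectedness caveat is justified: for a graph with two components whose boundaries are both nonempty, the harmonic measures of vertices in different components are mutually singular, so the statement tacitly assumes a connected graph. For the bound $|H\varphi|\le\|\varphi\|_\infty$, the cleanest justification is Proposition~\ref{proposition:royden}(b) applied to $(H\varphi\wedge\|\varphi\|_\infty)\vee(-\|\varphi\|_\infty)$, which has the same trace $\varphi$. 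This avoids comparing with the constant $1$, which need not lie in $D(Q^{(N)})$.
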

\begin{proof}
 Details for our setting are discussed in \cite[Proposition~2.1]{KLSS19}, see also \cite[Section~VI.4]{Soa}.
\end{proof}

The unique measures $\mu_x$, $x \in X$, (with the convention $\mu_x(\{\infty\}) = 0$ if $1 \not \in D(Q^{(N)})$) constructed in the previous proposition are called {\em $1$-harmonic measures}. 

\begin{rem}
Below we will fix a $1$-harmonic measure $\mu$ and use it as a reference measure on $\partial X$. Since the harmonic measures are equivalent, in principle our results do not depend on this choice. However,  in some formulas, different harmonic measures lead to different explicit expressions for the same object.   
\end{rem}

%
%
%
%
%
%
%
%

The following theorem, which will imply the continuity of the trace map, is due to Kasue \cite{Kas10} (for the Kuramochi boundary instead of the Royden boundary).

\begin{thm}[Kasue's theorem]
For all  $x \in X$, there exist $C_x \geq 0$ such that 
$$\int_{\partial X} |f|^2 d\mu_x \leq C_x \av{f}_{Q^{(N)}}^2, \quad \text{for all } f \in D(Q^{(N)}). $$
\end{thm}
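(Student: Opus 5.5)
The plan is to reduce to bounded $f$, pass to the $1$-harmonic part, and compute $\int_{\partial X}\gamma(f)^2\,d\mu_x$ as the value at $x$ of a $1$-harmonic extension. That value splits into $h(x)^2$ plus a Green potential, and both terms are bounded by the form norm.

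\textbf{Reduction to bounded $f$.} Let $f\in D(Q^{(N)})$ and set $f_n=(f\wedge n)\vee(-n)$. Then $\av{f_n}_{Q^{(N)}}\le \av{f}_{Q^{(N)}}$, since $Q^{(N)}$ is a Dirichlet form. Moreover $\gamma(f_n)\to\gamma(f)$ pointwise on $\partial X$ by Proposition~\ref{proposition:unbounded extension}. By Fatou's lemma it therefore suffices to prove the estimate for $f\in D(Q^{(N)})\cap\ell^\infty(X)$.

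\textbf{Passage to the harmonic part.} For such $f$ use the Royden decomposition $f=f_0+h$ with $h=f_h$. Since $\gamma(f_0)=0$, we have $\gamma(f)=\gamma(h)=:\varphi$. By Proposition~\ref{proposition:royden}(b) $h$ is bounded, and by the Pythagorean identity $Q^{(N)}_1(h)\le Q^{(N)}_1(f)$. As $\varphi$ is continuous and real valued, Proposition~\ref{proposition:harmonic extension and operations}(c) gives $\varphi^2\in\ran\gamma$ with $H(\varphi^2)=(h^2)_h$. If $\varphi^2\in\ran\gamma_0$, the definition of the harmonic measures yields
$$\int_{\partial X}|f|^2\,d\mu_x=H(\varphi^2)(x)=h(x)^2-(h^2)_0(x).$$
(One must check that the representation by $\mu_x$ applies to $\varphi^2$, e.g. because $h^2\in D(Q^{(N)})\cap\ell^\infty(X)$.)

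\textbf{Identifying the correction term.} A direct computation with the formal Laplacian gives
$$(\cL+1)(h^2)(z)=2h(z)(\cL+1)h(z)-\frac{1}{m(z)}\sum_{y}b(z,y)(h(z)-h(y))^2-\Big(\frac{c(z)}{m(z)}+1\Big)h(z)^2=:-g(z),$$
where $g\ge 0$ because $(\cL+1)h=0$. Hence $u:=-(h^2)_0\in D(Q^{(D)})$ satisfies $(\cL+1)u=g$. Summing against $m$ gives $\sum_z g(z)m(z)=2Q^{(N)}_1(h)$.

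\textbf{Green function estimate (main obstacle).} The claim to prove is
$$u(x)\le \sum_y G(x,y)g(y)m(y),$$
where $G$ is the kernel of the resolvent $(L^{(D)}+1)^{-1}$, i.e. the Green function of the transient graph $(b,c+m)$. The second ingredient is the bound
$$G(x,y)=G(y,x)\le G(x,x),$$
which holds because $G(x,\cdot)$ is $1$-harmonic off $x$, lies in $D(Q^{(D)})$ and vanishes on $\partial X$, so the maximum principle applies. The difficulty is that $g$ need not lie in $\ell^2$. I would therefore truncate $g$ to finite sets $K\uparrow X$, apply Lemma~\ref{lemma:positivity of superharmonic functions} to $\sum_y G(\cdot,y)g1_K(y)m(y)-u$, and pass to the limit by monotone convergence. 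This gives
$$u(x)\le G(x,x)\sum_y g(y)m(y)=2G(x,x)\,Q^{(N)}_1(h).$$

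\textbf{Conclusion.} Since $m(x)h(x)^2\le\av{h}_2^2$, we get
$$\int_{\partial X}|f|^2\,d\mu_x\le\Big(\frac{1}{m(x)}+2G(x,x)\Big)\av{h}_{Q^{(N)}}^2\le C_x\av{f}_{Q^{(N)}}^2$$
with $C_x=1/m(x)+2G(x,x)$.
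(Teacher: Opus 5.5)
Your reduction to bounded $f$ is exactly the one the paper uses; the paper then simply cites \cite{KLSS19} for bounded $f$. Your identity $\int_{\partial X}|f|^2\,d\mu_x=h(x)^2+u(x)$, with $u=H(\varphi^2)-h^2=-(h^2)_0\in D(Q^{(D)})$ and $(\cL+1)u=g\geq 0$, is also correct.

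The gap is in the step you flag as the main obstacle: the truncation argument proves the \emph{opposite} inequality. Set $v_K:=(L^{(D)}+1)^{-1}(g1_K)=\sum_y G(\cdot,y)g1_K(y)m(y)$. Then $(\cL+1)(v_K-u)=-g1_{X\setminus K}\leq 0$, so Lemma~\ref{lemma:positivity of superharmonic functions} cannot be applied to $v_K-u$. It applies to $u-v_K$ and yields $u\geq v_K$, hence, after monotone convergence, $u\geq w:=\sum_y G(\cdot,y)g(y)m(y)$. This lower bound is the automatic one: a nonnegative solution of $(\cL+1)u=g$ dominates the Green potential of $g$. The upper bound $u\leq w$ that you need says that the nonnegative $1$-harmonic function $u-w$ vanishes. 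A maximum principle applied to truncations cannot detect this without knowing that $u-w$ has finite energy and zero boundary values.

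The missing ingredient is $w\in D(Q^{(D)})$, and it is available here. The function $u$ is bounded, since $h^2$ and $(h^2)_h$ both take values in $[0,\aV{h}_\infty^2]$ by Proposition~\ref{proposition:royden}(b). So from $0\leq v_K\leq u$ you get $Q_1^{(D)}(v_K)=\as{g1_K,v_K}\leq \aV{u}_\infty\sum_y g(y)m(y)$. Thus $(v_K)$ is bounded in the Hilbert space $(D(Q^{(D)}),Q^{(D)}_1)$. Point evaluations are continuous there, so a weakly convergent subsequence has limit $w$; hence $w\in D(Q^{(D)})$. Testing against $\phi\in C_c(X)$ with Green's formula gives $(\cL+1)w=g$. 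Now $w-u\in D(Q^{(D)})$ is $1$-harmonic, and the lemma applied to it gives $u\leq w$, i.e. $u=w$, which is what you need.

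Two smaller corrections:
\begin{enumerate}
\item $\sum_z g(z)m(z)=Q^{(N)}_1(h)+\frac12\sum_{z,y}b(z,y)(h(z)-h(y))^2$. This is $\leq 2Q^{(N)}_1(h)$ but not equal to it, which is harmless.
\item $G(y,x)\leq G(x,x)$ does not follow from the paper's maximum principle as stated. Constants need not lie in $D(Q^{(N)})$, and $(\cL+1)(G(x,x)-G(\cdot,x))$ need not be nonnegative at $x$. Prove it instead with $v=G(\cdot,x)$ and $a=v(x)\geq 0$. Since $(v-a)_+\in D(Q^{(D)})$ vanishes at $x$, you get $0=Q^{(D)}_1(v,(v-a)_+)\geq Q^{(D)}_1((v-a)_+)$, because $Q^{(D)}_1(v\wedge a,(v-a)_+)\geq 0$.
\end{enumerate}
With these repairs your constant $C_x=1/m(x)+2G(x,x)$ works.
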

\begin{proof}
For $f \in D(Q^{(N)}) \cap \ell^\infty(X)$, the statement is contained in \cite[Appendix~C]{KLSS19} (note that the form norm $\av{\cdot}_{Q^{(N)}}$ is larger than a constant times the norm considered there). For unbounded $f$, the statement follows from our approximation of $f$ via $(f \wedge n) \vee (-n)$ and the contraction property of $Q^{(N)}$. 
\end{proof}

\begin{rem}
This theorem shows that even though $f \in D(Q^{(N)})$ can attain the values $ \pm \infty$ on $\partial X$, it will only do so only on a $\mu_x$-null set. 
\end{rem}

From now on, we fix a $1$-harmonic measure $\mu$.
{For $1 \leq p \leq \infty$, we denote the space of $ p $-integrable functions on $\partial X$ with respect to $\mu$ by $\mathcal{L}^p(\partial X,\mu)$.}
Since different  $1$-harmonic measures are mutually absolutely continuous with bounded Radon Nikodym derivatives, for $1 \leq p \leq \infty$, the space $L^p(\partial X,\mu)$ and the norm topology on it is independent of the choice of $\mu$. Hence, we simply write $$ L^p(\partial X):=L^p(\partial X,\mu) $$ Note however, that the  norm on $L^p(\partial X)$ does depend on $\mu$. 

\begin{corol}[Continuity of the trace]\label{coro:continuity trace}
 The trace $\gamma \colon D(Q^{(N)}) \to L^2(\partial X)$ is continuous with respect to $\av{\cdot}_{Q^{(N)}}$ and $\ran \gamma$ is dense in $L^2(\partial X)$.
\end{corol}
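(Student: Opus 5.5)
Continuity is the easy half and comes straight from Kasue's theorem applied at the point $x_0\in X$ whose $1$-harmonic measure $\mu=\mu_{x_0}$ we fixed. Since $\gamma$ is linear only in the weak sense discussed above, we first note that for every $f\in D(Q^{(N)})$ the trace $\gamma(f)$ is finite $\mu$-a.e. This follows from Kasue's theorem. Hence $\gamma(f)$ defines an element of $L^2(\partial X)$, and $\gamma\colon D(Q^{(N)})\to L^2(\partial X)$ is a genuinely linear map: $\gamma(f+g)=\gamma(f)+\gamma(g)$ wherever both are finite, i.e.\ $\mu$-a.e. Kasue's theorem then reads
$$\av{\gamma(f)}_{L^2(\partial X,\mu)}^2\le C_{x_0}\av{f}_{Q^{(N)}}^2,$$
which is boundedness, and therefore continuity, of the linear map $\gamma$.

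For density of $\ran\gamma$ it suffices to show that $\ran\gamma_0\subset\ran\gamma$ is dense. We use that $\mu$ is a finite Radon measure on the compact Hausdorff space $\partial X$, so $C(\partial X)$ is dense in $L^2(\partial X,\mu)$. We distinguish the two cases of Lemma~\ref{lemma:range of gamma}.

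If $1\in D(Q^{(N)})$, then $\ran\gamma_0$ is sup-norm dense in $C(\partial X)$. Since $\av{\varphi}_{L^2(\mu)}\le \mu(\partial X)^{1/2}\av{\varphi}_\infty$, it is also $L^2$-dense in $C(\partial X)$, and hence in $L^2(\partial X)$.

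If $1\notin D(Q^{(N)})$, then $\ran\gamma_0$ is sup-dense in $\{\varphi\in C(\partial X)\mid \varphi(\infty)=0\}$. By the normalization of the $1$-harmonic measures we have $\mu(\{\infty\})=0$. So it remains to check that $\{\varphi\in C(\partial X)\mid\varphi(\infty)=0\}$ is $L^2(\mu)$-dense in $C(\partial X)$. This is the only mildly delicate point. Given $\psi\in C(\partial X)$, we use outer regularity of $\mu$ at the null set $\{\infty\}$ to pick open neighborhoods $U_n\ni\infty$ with $\mu(U_n)\to 0$. By Urysohn's lemma (compact Hausdorff spaces are normal) we choose continuous $\chi_n\colon\partial X\to[0,1]$ with $\chi_n(\infty)=1$ and $\operatorname{supp}\chi_n\subset U_n$. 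Then $\psi(1-\chi_n)$ vanishes at $\infty$ and
$$\av{\psi-\psi(1-\chi_n)}_{L^2(\mu)}^2\le \av{\psi}_\infty^2\,\mu(U_n)\to0 .$$
Combining this with the sup-norm density gives density of $\ran\gamma_0$ in $L^2(\partial X)$ in this case as well.

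The main obstacle is thus merely this bookkeeping: well-definedness and linearity of $\gamma$ into $L^2$ despite the possibly infinite values of the trace, and the removal of the point $\infty$ in the second case. Both are handled by the a.e.\ finiteness from Kasue's theorem and by $\mu(\{\infty\})=0$, respectively.
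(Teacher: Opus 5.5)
Your proposal is correct and follows the paper's own argument: continuity from Kasue's theorem, and density from the lemma on the range of $\gamma_0$ combined with $\mu(\{\infty\})=0$ in the case $1\notin D(Q^{(N)})$. You simply spell out details the paper leaves implicit, namely the $\mu$-a.e.\ finiteness and linearity of $\gamma$ and the Urysohn cutoff near $\infty$ in that second case.
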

\begin{proof}
 The continuity of the trace directly follows from Kasue's theorem. The density of $\ran \gamma$ follows either from $\ran \gamma_0 = C(\partial X)$ or from $\ran \gamma_0  = C(\partial X \setminus \{\infty\})$ and the assumption $\mu(\{\infty\}) = 0$. 
\end{proof}


%
%

%
%
%

\section{Dirichlet forms  and boundary conditions}\label{sec-Q}
As in the previous section, we fix a graph $(b,c)$ over $(X,m)$.   We  use the boundary theory developed in the previous section to describe various Markovian realizations of $\mathcal L$ (i.e. self-adjoint realization of $\mathcal L$ induced by Dirichlet forms) via abstract boundary conditions on $\partial X$. Specifically, we describe Markovian realizations arising from  Dirichlet forms  $Q$ that lie between $Q^{(D)}$ and $Q^{(N)}$ in the sense that $Q^{(N)}\leq Q \leq Q^{(D)}$ holds or, later, in the more restrictive sense of sandwiched semigroups.   The boundary conditions appear at first  abstractly as Markovian forms $q$ on $L^2(\partial X)$  such that $Q$ can be written as a sum of $Q^{(D)}$ or $Q^{(N)}$ and $q$.  Under additional assumptions the boundary conditions can also be expressed explicitly via the restrictions of the function to the boundary $\partial X$ and via a normal derivative on $\partial X$.

\subsection{Trace Dirichlet forms}
Let $Q$ be a Dirichlet form on $\ell^2(X,m)$ with 
$$ Q^{(N)} \leq    Q\leq Q^{(D)}. $$
We define the {\em trace form} ${\rm Tr}\, Q$ of $Q$ on $L^2(\partial X)$ as follows: Its domain is given by $D({\rm Tr}\, Q) = \gamma D(Q)$ and 
$${\rm Tr}\, Q(\varphi) = Q_1(H \varphi), \quad \varphi \in D({\rm Tr}\, Q).$$
The form  $$ q^{DN} = {\rm Tr}\, Q^{(N)} $$ is called {\em Dirichlet-to-Neumann form}.  

\begin{rem}
 We {highlight for} the reader that our trace Dirichlet forms are defined in terms of $Q_1 = Q + \av{\cdot}_{\ell^2(X,m)}^2$ and not in terms of $Q$. This is different from our source material \cite{KLSS19} because we work with $1$-harmonic extensions and not with harmonic extensions, see also Remark~\ref{remark: ell two vs measure free}. 
\end{rem}

Trace forms give representations of $Q$ via $Q^{(D)}$ and $Q^{(N)}$ as discussed in the subsequent lemma, cf. \cite[Lemma 3.11]{KLSS19}.

\begin{lemm}\label{lemma:q harmonicity}
 Let $Q$ be a Dirichlet form with $Q^{(N)} \leq Q \leq Q^{(D)}$. If $f = f_h + f_0 \in D(Q)$ with $f_0 \in D(Q^{(D)})$ and $1$-harmonic $f_h \in D(Q^{(N)})$, then $f_h \in D(Q)$ and 
 $$Q_1(f)  = Q_1^{(D)}(f_0) + Q_1(f_h).$$
In particular, 
$$Q_1 (f) =  Q_1^{(D)} (f_0) +  {\rm Tr}\, Q (\gamma f)$$  
and
$$Q (f) = Q^{(N)}(f) + {\rm Tr}\, Q (\gamma f) - q^{DN} (\gamma f)$$
\end{lemm}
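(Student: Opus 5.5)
The plan is to reduce everything to one orthogonality statement, $Q_1(f_0,f_h)=0$. I would get it from two facts: on $D(Q^{(D)})$ the three forms $Q^{(N)}$, $Q$, $Q^{(D)}$ coincide, and $Q-Q^{(N)}$ is a positive form on $D(Q)$.

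\emph{Step 1: membership.} Since $Q\leq Q^{(D)}$, we have $D(Q^{(D)})\subset D(Q)$. Hence $f_0\in D(Q)$, and so $f_h=f-f_0\in D(Q)$.

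\emph{Step 2: the forms agree on $D(Q^{(D)})$.} For $g\in D(Q^{(D)})$ the sandwich $Q^{(N)}\leq Q\leq Q^{(D)}$ gives
$$Q^{(N)}(g)\leq Q(g)\leq Q^{(D)}(g)=Q^{(N)}(g),$$
because $Q^{(D)}$ is a restriction of $Q^{(N)}$. So $Q(g)=Q^{(D)}(g)$.

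\emph{Step 3: the cross term (the main point).} On the common domain $D(Q)\subset D(Q^{(N)})$, define the symmetric bilinear form
$$p(u,v):=Q(u,v)-Q^{(N)}(u,v).$$
By assumption $p(u)\geq 0$, so $p$ is a positive symmetric form and satisfies the Cauchy--Schwarz inequality. By Step 2, $p(f_0)=0$. Therefore
$$|p(f_0,f_h)|^2\leq p(f_0)\,p(f_h)=0,$$
that is, $Q(f_0,f_h)=Q^{(N)}(f_0,f_h)$. It remains to show $Q^{(N)}_1(f_0,f_h)=0$. This is the orthogonality underlying the Royden decomposition, Proposition~\ref{proposition:royden}. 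Directly: choose $\varphi_n\in C_c(X)$ with $\varphi_n\to f_0$ in form norm. Green's formula gives
$$Q^{(N)}_1(\varphi_n,f_h)=\sum_x \varphi_n(x)(\cL+1)f_h(x)\,m(x)=0,$$
and passing to the limit yields $Q^{(N)}_1(f_0,f_h)=0$. Expanding $Q_1(f_0+f_h)$ and using Step 2 for the $f_0$-term then gives
$$Q_1(f)=Q_1^{(D)}(f_0)+Q_1(f_h).$$

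\emph{Step 4: the two displayed formulas.} By Corollary~\ref{coro:kernel gamma}, $\gamma f_0=0$, so $\gamma f_h=\gamma f$. Since $f_h$ is $1$-harmonic, uniqueness in the Dirichlet problem gives $f_h=H(\gamma f)$. As $f\in D(Q)$, we have $\gamma f\in D({\rm Tr}\,Q)$ and $Q_1(f_h)={\rm Tr}\,Q(\gamma f)$, which is the first formula.

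Apply the same identity to $Q=Q^{(N)}$ (which trivially satisfies the sandwich), using $Q^{(N)}_1(f_0)=Q^{(D)}_1(f_0)$:
$$Q^{(N)}_1(f)=Q^{(D)}_1(f_0)+q^{DN}(\gamma f).$$
Subtracting this from the first formula removes both $Q^{(D)}_1(f_0)$ and the $\ell^2$-norm term $\|f\|^2$, which leaves
$$Q(f)=Q^{(N)}(f)+{\rm Tr}\,Q(\gamma f)-q^{DN}(\gamma f).$$

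The only genuinely non-formal step is Step 3. Without the positivity of $Q-Q^{(N)}$ one would know $Q$ only on $D(Q^{(D)})$ and could not control the mixed term $Q(f_0,f_h)$.
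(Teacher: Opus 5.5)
Your proof is correct and is essentially the paper's argument. The paper obtains $Q_1(f_0,f_h)=0$ by expanding $Q_1(f_0+\alpha f_h)\geq Q^{(N)}_1(f_0+\alpha f_h)$, using $Q^{(N)}_1(f_0,f_h)=0$ and $Q_1(f_0)=Q^{(D)}_1(f_0)$, and letting $\alpha\to 0\pm$; this is your Cauchy--Schwarz argument for the positive form $Q-Q^{(N)}$ vanishing at $f_0$, written out by hand. Your derivation of the two trace formulas, via $f_h=H(\gamma f)$ and the specialization $Q=Q^{(N)}$, also matches the paper.
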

\begin{proof} The inequality $Q^{(N)} \leq Q \leq Q^{(D)}$ together with $Q^{(D)} = Q^{(N)}$ on $D(Q^{(D)})$ 
 implies $D(Q^{(D)}) \subset D(Q)$ and $Q = Q^{(D)}$ on $D(Q^{(D)})$. In particular, $f_0 \in D(Q)$ and $f_h = f - f_0 \in D(Q)$ holds.  Using  $Q \geq Q^{(N)}$ and the $1$-harmonicity of $f_h$, we obtain 
\begin{multline*}
Q_1^{(D)}(f_0) + 2\alpha Q_1(f_0,f_h) + \alpha^2 Q_1(f_{h}) = Q_1(f_0 + \alpha f_h) \\
\geq Q_1^{(N)}(f_0 + \alpha f_h) = Q_1^{(D)}(f_0) + \alpha^2 Q^{(N)}_1(f_h). 
\end{multline*}
Subtracting $Q_1^{(D)}(f_0)$ on both sides, dividing by $\alpha$, {making the case distinction $ \alpha>0 $ and $ \alpha<0 $} and letting $\alpha \to 0\pm$, we find $Q_1(f_0,f_h) = 0$.
From this  and $Q = Q^{(D)}$ on $D(Q^{(D)})$, we derive
$$Q_1 (f) = Q_1 (f_0 + f_h) = Q_1 (f_0) + Q_1 (f_h) = Q_1^{(D)} (f_0) + Q_1 (f_h).$$
This is the first claimed equality.

From this and the definition of the trace form, we infer 
$$ Q_1 (f) =  Q_1^{(D)} (f_0) +  {\rm Tr}\, Q (\gamma f),$$
as $H(\gamma f) = f_h$  (solutions to the Dirichlet problem are unique and $\gamma f_h = \gamma f$ because $f_0$ vanishes on $\partial X$). Applying this formula with $Q =  Q^{(N)}$, we find
$Q_1^{(N)} (f) = Q_1^{(D)} (f_0) + q^{DN} (\gamma f)$, which leads to
$$Q_1^{(D)} (f_0) = Q_1^{(N)}(f) - q^{DN} (\gamma f).$$
Putting this in the first claimed equality gives  
$$Q (f) = Q^{(N)}(f) + {\rm Tr}\, Q (\gamma f) - q^{DN} (\gamma f)$$
and all claimed equalities are proven.  
\end{proof}

For the following proposition, we recall that a \emph{Dirichlet form in the wide sense} is a closed Markovian form which is not necessarily densely defined. 

\begin{propo}[Traces of Dirichlet forms]\label{theorem:traces}
 For every Dirichlet form $Q$ with  $ Q^{(N)} \leq    Q\leq Q^{(D)} $,
the trace form ${\rm Tr}\, Q$ is a Dirichlet form in the wide sense on $L^2(\partial X)$ and ${\rm Tr}\, Q - q^{DN}$ is Markovian (when considered as a quadratic form with  domain $D({\rm Tr}\, Q)$). Moreover, 
$$D(Q) = \{f \in D(Q^{(N)}) \mid \gamma f \in D({\rm Tr}\, Q)\}.$$
\end{propo}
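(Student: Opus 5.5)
I will prove the three assertions in the order domain formula, closedness, Markov property. Throughout I use Lemma~\ref{lemma:q harmonicity}, the Royden decomposition (Proposition~\ref{proposition:royden}) and Corollary~\ref{coro:kernel gamma}.

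\emph{Domain formula.} The inclusion ``$\subset$'' is immediate, since $D(Q)\subset D(Q^{(N)})$ follows from $Q^{(N)}\le Q$. Conversely, let $f\in D(Q^{(N)})$ with $\gamma f=\gamma g$ for some $g\in D(Q)$. Then $f-g\in\ker\gamma=D(Q^{(D)})$. We also have $D(Q^{(D)})\subset D(Q)$, as shown at the start of the proof of Lemma~\ref{lemma:q harmonicity}. Hence $f=g+(f-g)\in D(Q)$.

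\emph{Closedness.} By Lemma~\ref{lemma:q harmonicity}, for $\varphi=\gamma f$ with $f\in D(Q)$ we have $H\varphi=f_h\in D(Q)$ and ${\rm Tr}\,Q(\varphi)=Q_1(H\varphi)$. Thus ${\rm Tr}\,Q$ is a well-defined quadratic form; it is obtained from $Q_1$ through the linear bijection $H$ from $\gamma D(Q)$ onto the $1$-harmonic part of $D(Q)$. Let $\varphi_n=\gamma f_n$ be Cauchy for the norm $\big({\rm Tr}\,Q+\|\cdot\|_{L^2(\partial X)}^2\big)^{1/2}$. It suffices to show that $(H\varphi_n)$ is Cauchy in $Q_1$, because ${\rm Tr}\,Q(\varphi_n-\varphi_m)=Q_1(H\varphi_n-H\varphi_m)$. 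Since $Q$ is closed, $H\varphi_n\to u$ in $(D(Q),Q_1)$. The limit $u$ is $1$-harmonic: $Q_1$-convergence implies $Q^{(N)}_1$-convergence and pointwise convergence, and the equation $(\mathcal L+1)u=0$ passes to the limit via Green's formula tested against $C_c(X)$. By Corollary~\ref{coro:continuity trace}, $\gamma$ is continuous from $(D(Q^{(N)}),\|\cdot\|_{Q^{(N)}})$ to $L^2(\partial X)$, and $\|\cdot\|_{Q^{(N)}}\le\|\cdot\|_Q$. Hence $\varphi_n=\gamma H\varphi_n\to\gamma u$ in $L^2(\partial X)$, so the $L^2$-limit equals $\gamma u\in D({\rm Tr}\,Q)$, and ${\rm Tr}\,Q(\varphi_n-\gamma u)\to0$. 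This proves closedness; density is not claimed.

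\emph{Markov property.} Both Markovian claims should follow from one inequality: for a normal contraction $C$ and $\varphi\in D({\rm Tr}\,Q)$,
\begin{equation*}
({\rm Tr}\,Q-q^{DN})(C\varphi)\le({\rm Tr}\,Q-q^{DN})(\varphi),
\end{equation*}
together with $q^{DN}(C\varphi)\le q^{DN}(\varphi)$.

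Take $f=H\varphi$. By Lemma~\ref{lemma:q harmonicity} applied to $f$,
\begin{equation*}
({\rm Tr}\,Q-q^{DN})(\varphi)=Q(f)-Q^{(N)}(f).
\end{equation*}
For $g=Cf\in D(Q)$ we have $\gamma g=C\varphi$, using Proposition~\ref{proposition:harmonic extension and operations} and the compatibility of $\gamma$ with contractions. The same lemma then gives
\begin{equation*}
({\rm Tr}\,Q-q^{DN})(C\varphi)=Q(Cf)-Q^{(N)}(Cf).
\end{equation*}
The main obstacle is to show that $f\mapsto Q(f)-Q^{(N)}(f)$ is itself Markovian. Knowing that $Q$ and $Q^{(N)}$ are each Markovian does not suffice for this. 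My first attempt is to reduce to the contractions $f\mapsto (f\wedge1)\vee0$, or more generally to $f\mapsto f\wedge a$, which characterize Markovianity. I would then use the structure of $Q^{(N)}$ as an explicit sum over $b$ and $c$: the difference $Q-Q^{(N)}$ depends only on $\gamma f$, since it vanishes on $D(Q^{(D)})$. The natural approach is to approximate with Beurling--Deny-type representations, or to use that sandwiched forms satisfy $Q(f\wedge a, f-f\wedge a)\le0$ while $Q^{(N)}$ has no cross terms ``at infinity''.

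Failing that, I would follow \cite[Theorem~3.12]{KLSS19}. There, ${\rm Tr}\,Q-q^{DN}$ is obtained as a limit of forms of jump type on exhaustions. Concretely, I would take $Q^{(N)}$ restricted to complements of finite sets $K_n$ and note that on $f=H\varphi$ the difference localizes near infinity. Once the inequality is established, ${\rm Tr}\,Q=q^{DN}+({\rm Tr}\,Q-q^{DN})$ is Markovian, since $q^{DN}$ is Markovian. This last fact is the minimizing property: $q^{DN}(C\varphi)\le Q^{(N)}_1(C H\varphi)\le Q^{(N)}_1(H\varphi)$, where $H$ minimizes $Q^{(N)}_1$ among functions with given trace.
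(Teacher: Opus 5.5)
\textbf{Overall.} Your domain and closedness arguments are correct. For the domain you use $f-g\in\ker\gamma=D(Q^{(D)})\subset D(Q)$ directly via Corollary~\ref{coro:kernel gamma}. This is a more compact packaging of the paper's argument, which uses the Royden decomposition plus the maximum principle; that is also how the corollary itself is proved. For closedness you follow the paper. You also make explicit a point the paper uses tacitly: the $Q_1$-limit $u$ of the $H\varphi_n$ is again $1$-harmonic, so $H(\gamma u)=u$.

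For the Markov assertions, the decisive input is Markovianity of $Q-Q^{(N)}=Q_1-Q^{(N)}_1$ on $D(Q)$. This is exactly what the paper imports from \cite[Theorem~3.12]{KLSS19}. Your own sketches do not establish it. For any Dirichlet form one has $Q(f\wedge a,f-f\wedge a)\ge 0$ for $a\ge 0$, not $\le 0$, and the analogous statement for the difference $Q-Q^{(N)}$ is precisely the nontrivial content. So you should simply cite it, as the paper does. Transferring it to the boundary by applying Lemma~\ref{lemma:q harmonicity} to the non-harmonic function $Cf$ is the same mechanism as in the paper.

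\textbf{Where your route differs.} Two points need attention.

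First, you obtain Markovianity of ${\rm Tr}\,Q$ as the sum $q^{DN}+({\rm Tr}\,Q-q^{DN})$. This makes the claim that ${\rm Tr}\,Q$ is a Dirichlet form in the wide sense depend on the cited theorem, which is unnecessary. Lemma~\ref{lemma:q harmonicity} gives $Q_1(g)\ge{\rm Tr}\,Q(\gamma g)$ for every $g\in D(Q)$. Hence ${\rm Tr}\,Q(C\varphi)\le Q_1(CH\varphi)\le Q_1(H\varphi)={\rm Tr}\,Q(\varphi)$ follows from the Markov property of $Q$ alone. This is your own $q^{DN}$ argument with $Q_1$ in place of $Q^{(N)}_1$, and it is the paper's route.

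Second, the paper only provides the identity $\gamma(Cf)=C\varphi$ for monotone $C$ or bounded $\varphi$; for a general normal contraction, $C(\pm\infty)$ need not exist. This is why the paper splits into cases. For ${\rm Tr}\,Q$ it uses the truncations $D_n(t)=(t\wedge n)\vee(-n)$ together with lower semicontinuity. For the non-closed form ${\rm Tr}\,Q-q^{DN}$ it uses an approximation based on \cite{Anc76}.

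Working directly on $D(Q)$, as you do, can bypass all of this, provided the cited Markov property holds on all of $D(Q)$, as the paper states it. But you must then prove $\gamma(Cf)=C\circ\gamma f$ $\mu$-a.e., and this does hold:
\begin{itemize}
\item If $\gamma f(z)$ is finite, then for some open $U\ni z$ and some $N$, $f$ coincides on $U\cap X$ with the bounded truncation $D_Nf$.
\item By continuity and density of $X$, it follows that $\widehat{Cf}=C\circ\widehat{D_Nf}=C\circ\hat f$ on $U$.
\item $\gamma f$ is finite $\mu$-a.e. by Kasue's theorem.
\end{itemize}
With this supplied, your argument is somewhat shorter than the paper's.
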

\begin{proof} ${\rm Tr}\, Q$ is closed: Let $(\varphi_n)$ be $({\rm Tr}\, Q)_1$-Cauchy. Then there exists $h \in D(Q)$ and $\varphi \in L^2(\partial X)$ such that $H \varphi_n \to h$ with respect to $\av{\cdot}_{Q}$ and $\varphi_n \to \varphi$ with respect to $L^2(\partial X)$. Using the continuity of the trace map (Corollary~\ref{coro:continuity trace}) and $Q \geq Q^{(N)}$, we infer $\varphi_n = \gamma(H \varphi_n) \to \gamma(h)$ in $L^2(\partial X)$, i.e., $\varphi = \gamma(h)$.  Hence, $\varphi \in D({\rm Tr}\, Q)$ and $\varphi_n \to \varphi$ with respect to $({\rm Tr}\, Q)_1$.

 ${\rm Tr}\, Q$ is compatible with  normal contractions: Let $C$ be a normal contraction and let $\varphi \in D({\rm Tr}\, Q)$.

 Case 1.  $\varphi$ is bounded: Then  $C \varphi \in \gamma D(Q)$ and $ C(H\varphi) = H (C \varphi)  + f$ with $f \in D(Q^{(D)})$, see Proposition~\ref{proposition:harmonic extension and operations}. Using Lemma~\ref{lemma:q harmonicity}, this implies
\begin{multline*}
 {\rm Tr}\, Q(C\varphi) = Q_1(H (C \varphi)) \leq{ Q_1(H (C \varphi))+Q_1(f) }\\=Q_1(H (C \varphi) + f) 
 = Q_1(C(H \varphi)) \leq Q_1(H \varphi) = {\rm Tr}\, Q(\varphi).
\end{multline*}

Case 2. $C$ is increasing and $\varphi$ is arbitrary: The same arguments as in Case~1 yield the claim.

Case 3.  $C$ and $\varphi$ arbitrary:  Consider  the sequence of increasing normal contractions $D_n \colon \R \to \R$, $D_n(t) = (t \wedge n) \vee (-n)$. Using $C D_n(\varphi) \to C \varphi$, $n \to \infty$, in $L^2(\partial X$), and the boundedness of $D_n \varphi$, the lower semicontinuity of ${\rm Tr}\,Q$ and the already proven cases yield
$${\rm Tr}\,Q(C \varphi) \leq \liminf_{n\to \infty}{\rm Tr}\,Q(C  D_n\varphi) \leq \liminf_{n\to \infty}{\rm Tr}\,Q( D_n\varphi) \leq {\rm Tr}\, Q(\varphi). $$

${\rm Tr}\, Q - q^{DN}$ is Markovian: According to  \cite[Theorem~3.12]{KLSS19} the difference $Q_1 - Q^{(N)}_1$ is Markovian on $D(Q)$ (there extended forms are considered but the extended form of $Q_1$ equals $Q_1$). Hence, it follows with the same arguments as for ${\rm Tr}\, Q$  that ${\rm Tr}\, Q - q^{DN}$ is compatible with increasing normal contractions and with all normal contractions on bounded functions. Since it need not be lower semicontinuous, the approximation argument for unbounded functions is a bit more complicated (it relies on the form norm density of bounded functions in  $D({\rm Tr}\, Q)$, $D(q^{DN})$ and the fact that normal contractions act as form norm continuous (nonlinear) operators on domains of Dirichlet forms, see \cite{Anc76}).

For the ``moreover''-statement, we let $f \in D(Q^{(N)})$ with $\gamma f \in D({\rm Tr}\, Q)$ be given. Using the Royden decomposition, we write $f = f_0 + f_h$ with $f_0 \in D(Q^{(D)}) \subset D(Q)$ and $1$-harmonic $f_h \in D(Q^{(N)})$. By assumption there exists $g \in D(Q)$ such that $\gamma f = \gamma g$ and the previous lemma shows $g = g_0 + g_h$ with $g_0 \in D(Q^{(D)})$ and $1$-harmonic $g_h \in D(Q)$. Since $\gamma f_h = \gamma f = \gamma g = \gamma g_h$, the maximum principle yields $f_h  = g_h \in D(Q)$ and we arrive at $f = f_0 + f_h \in D(Q)$. 
\end{proof}

\begin{corol}[Representing Dirichlet forms via forms on the boundary] \label{coro:dirichlet forms via boundary} Let $Q$ be a Dirichlet form on $\ell^2(X,m)$ with
$ Q^{(N)} \leq    Q\leq Q^{(D)} $. Then, there exists a Dirichlet form in the wide sense  $q$ and a Markovian form $q'$ on  $L^2(\partial X)$ with 
$$Q_1 (f) = Q_1^{(D)} (f_0) + q (\gamma f) \mbox{ and } Q (f) = Q^{(N)}(f) + q'(\gamma f)$$
for all $f\in D(Q)$ with Royden decomposition $f = f_0 + f_h$. 
\end{corol}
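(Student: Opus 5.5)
This corollary should follow by packaging Lemma~\ref{lemma:q harmonicity} together with Proposition~\ref{theorem:traces}. The plan is to set
$$q := {\rm Tr}\, Q \quad\mbox{ and }\quad q' := {\rm Tr}\, Q - q^{DN},$$
both with domain $D({\rm Tr}\, Q) = \gamma D(Q)$.

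First I will check that $q'$ is well defined on this domain. Since $Q \geq Q^{(N)}$, we have $D(Q) \subset D(Q^{(N)})$, hence $\gamma D(Q) \subset \gamma D(Q^{(N)}) = D(q^{DN})$. So the difference makes sense as a quadratic form on $D({\rm Tr}\, Q)$. It is also nonnegative, and this uses the definitions directly. For $\varphi \in D({\rm Tr}\,Q)$, the function $H\varphi$ lies in $D(Q)$: by Lemma~\ref{lemma:q harmonicity} the $1$-harmonic part of any $f\in D(Q)$ with $\gamma f=\varphi$ belongs to $D(Q)$, and by the maximum principle this part coincides with $H\varphi$. Then $Q_1(H\varphi) \geq Q_1^{(N)}(H\varphi) = q^{DN}(\varphi)$.

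Next, the structural properties are exactly those stated in Proposition~\ref{theorem:traces}. That result gives that ${\rm Tr}\, Q$ is a Dirichlet form in the wide sense on $L^2(\partial X)$, and that ${\rm Tr}\, Q - q^{DN}$ is Markovian on $D({\rm Tr}\, Q)$. Symmetry and bilinearity of $q'$ follow by polarization, since both ${\rm Tr}\,Q$ and $q^{DN}$ are quadratic forms.

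Finally, the two identities come from Lemma~\ref{lemma:q harmonicity}. For $f\in D(Q)$ with Royden decomposition $f=f_0+f_h$, the lemma gives $Q_1(f) = Q_1^{(D)}(f_0) + {\rm Tr}\,Q(\gamma f)$, which is the first identity with $q={\rm Tr}\,Q$. It also gives $Q(f) = Q^{(N)}(f) + {\rm Tr}\,Q(\gamma f) - q^{DN}(\gamma f)$, which is the second identity with $q'$ as defined.

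I expect essentially no obstacle, since all the substantive work (closedness, Markov property of the trace and of the difference) was done in Proposition~\ref{theorem:traces}. The only point needing care is the domain bookkeeping, namely that $\gamma D(Q)\subset D(q^{DN})$, so that $q'$ is defined on all of $D({\rm Tr}\,Q)$. This is handled by the inclusion $D(Q)\subset D(Q^{(N)})$.
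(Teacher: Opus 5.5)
Your proposal is correct and matches the paper, which states this corollary without a separate proof as a direct consequence of Lemma~\ref{lemma:q harmonicity} and Proposition~\ref{theorem:traces}, with exactly your choices $q = {\rm Tr}\,Q$ and $q' = {\rm Tr}\,Q - q^{DN}$. Your additional checks are sound and fill in bookkeeping the paper leaves implicit: that $\gamma D(Q) \subset D(q^{DN})$, and that $q' \geq 0$ because $H\varphi \in D(Q)$ and $Q_1 \geq Q_1^{(N)}$.
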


%
%
%
%

\subsection{Sandwiched Dirichlet forms}
In the previous section we have seen that forms $Q$  with  $Q^{(N)} \leq Q \leq Q^{(D)}$ can be described by forms $q$  on the boundary. Here, we look at forms that lay  between $Q^{(N)}$ and $Q^{(D)}$ in an even stricter way. For such forms we can explicitly describe the arising forms on the boundary.

We fix a graph $(b,c)$ over $(X,m)$. We say that $Q$ is {\em sandwiched} between $Q^{(D)}$ and $Q^{(N)}$ if the associated semigroup $e^{-tL}$ satisfies $$ e^{-tL^{(D)}} f \leq e^{-tL}f \leq e^{-tL^{(N)}} f $$ for all nonnegative $f \in \ell^2(X,m)$ and $t \geq 0$. The following theorem characterizes sandwiched semigroups. In particular, it shows that forms sandwiched between $Q^{(D)}$ and $Q^{(N)}$ satisfy $Q^{(N)} \leq Q \leq Q^{(D)}$.
%
%

\begin{thm}[Arendt-Warma {theorem}]\label{theorem:arendt-warma}
Assume that $c = 0$, $m(X) < \infty$ and let $Q$ be a Dirichlet form on $\ell^2(X,m)$. The following assertions are equivalent: 
\begin{enumerate}[(i)]
 \item $Q$ is sandwiched between $Q^{(D)}$ and $Q^{(N)}$ and $D(Q) \cap C_c(X \cup \partial X)$ is dense in $D(Q)$ with respect to the form norm. 
 
 \item There exists a closed set $F \subset \partial X$ and a Radon measure $\nu$ on  $\partial X \setminus F$ such that 
 $$D(Q) = \{f \in D(Q^{(N)}) \mid f =  0 \text{ on } F \text{ and } \int_{\partial X \setminus F} |f|^2 d\nu < \infty\}$$
 and 
 $$Q(f) = Q^{(N)}(f) + \int_{\partial X \setminus F} |f|^2 d\nu.$$
\end{enumerate}
In particular, the generator of $Q$ is a restriction of $\cL$. 

\end{thm}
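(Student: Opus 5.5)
The plan is to translate both semigroup inequalities into form-theoretic conditions using Ouhabaz's domination criterion. Then, using Lemma~\ref{lemma:q harmonicity}, we shift everything to the boundary form $q' := {\rm Tr}\,Q - q^{DN}$ from Corollary~\ref{coro:dirichlet forms via boundary}, so that $Q(f) = Q^{(N)}(f) + q'(\gamma f)$. We then show that sandwiching forces $q'$ to be a \emph{local, positive} quadratic form on the traces, and such a form is given by a measure. For Dirichlet forms with positivity preserving semigroups, Ouhabaz's criterion says that $e^{-tL}\le e^{-tL^{(N)}}$ holds iff $D(Q)$ is an ideal in $D(Q^{(N)})$ and $Q^{(N)}(u,v)\le Q(u,v)$ for $0\le u,v\in D(Q)$. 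Likewise, $e^{-tL^{(D)}}\le e^{-tL}$ holds iff $D(Q^{(D)})$ is an ideal in $D(Q)$ and $Q(u,v)\le Q^{(D)}(u,v)$ for nonnegative $u,v\in D(Q^{(D)})$. In particular, sandwiching gives $Q^{(N)}\le Q\le Q^{(D)}$, so all results of the previous subsection apply.

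For (i)$\Rightarrow$(ii), I would first set $\mathcal C := \gamma\bigl(D(Q)\cap C_c(X\cup\partial X)\bigr)\subset C(\partial X)$. Here $m(X)<\infty$ and $c=0$ give $1\in D(Q^{(N)})$, so $\ran\gamma_0$ is dense in $C(\partial X)$ by Lemma~\ref{lemma:range of gamma}(a). By the ideal property and Proposition~\ref{proposition:harmonic extension and operations}, $\mathcal C$ is a lattice ideal (with respect to multiplication by bounded traces) inside the algebra $\gamma_0(D(Q^{(N)})\cap\ell^\infty)$. Define $F$ as the common zero set of $\mathcal C$; it is closed, and every element of $D(Q)$ vanishes on $F$ by the form-norm density assumption together with Kasue's theorem. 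Next I establish locality. For $\varphi,\psi\in\mathcal C$ with $\varphi\psi=0$, take $u=(H\varphi)_+$ and $v=(H\psi)_+$, and similarly for the negative parts. These can be modified by $D(Q^{(D)})$-functions to have disjoint supports on $X$ (again Proposition~\ref{proposition:harmonic extension and operations}), so $Q^{(N)}(u,v)=Q(u,v)\le 0$ from the domination inequalities applied in both directions. This gives $q'(\varphi,\psi)=0$ whenever $\varphi\psi=0$. Positivity, i.e.\ $q'(\varphi,\psi)\ge0$ for $\varphi,\psi\ge0$, follows from $Q^{(N)}(u,v)\le Q(u,v)$ on nonnegative functions.

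A positive, local symmetric bilinear form on the lattice $\mathcal C$ of continuous functions on the locally compact space $\partial X\setminus F$ is then represented as $q'(\varphi,\psi)=\int\varphi\psi\,d\nu$ by a Beurling--Deny/Riesz type argument. Concretely, for fixed $\psi\in\mathcal C$ that equals $1$ on a neighbourhood of ${\rm supp}\,\varphi$, the functional $\varphi\mapsto q'(\varphi,\psi)$ is positive. By locality it is independent of the choice of $\psi$, and it extends to $C_c(\partial X\setminus F)$ by density of $\mathcal C$ there (Stone--Weierstra\ss, since $\mathcal C$ separates points off $F$). The Riesz theorem then yields $\nu$. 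Locality also gives $q'(\varphi,\psi)=q'(\varphi\psi,\chi)$ with $\chi=1$ near the supports, hence $q'(\varphi)=\int\varphi^2\,d\nu$. Finally, the form-norm density assumption together with lower semicontinuity (Fatou) extends the formula from $\mathcal C$ to all of $D(Q)$ and identifies $D(Q)$ as stated, using the description $D(Q)=\{f\in D(Q^{(N)})\mid\gamma f\in D({\rm Tr}\,Q)\}$ from Proposition~\ref{theorem:traces}.

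For (ii)$\Rightarrow$(i), the form in (ii) is closed: if $f_n\to f$ in form norm, then $\gamma f_n\to\gamma f$ $\mu$-a.e.\ along a subsequence by continuity of the trace, and Fatou applies to the $\nu$-integral. The form is Markovian since $Q^{(N)}$ is and $|C\varphi|\le|\varphi|$. The Ouhabaz ideal and inequality conditions can be checked directly, using that $D(Q^{(D)})=\ker\gamma$ (Corollary~\ref{coro:kernel gamma}) and that the boundary term is nonnegative on nonnegative pairs. Density of $D(Q)\cap C_c(X\cup\partial X)$ follows by truncation and the lattice structure of $\ran\gamma_0$. That the generator restricts $\cL$ follows from the sandwich $Q^{(D)}\subset Q\subset Q^{(N)}$ and Green's formula, as noted after the definition of $Q^{(D)}$. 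The main obstacle I expect is the locality step. The difficulty is that the $1$-harmonic extensions $H\varphi,H\psi$ of functions with disjoint boundary supports do not have disjoint supports in $X$, so one must carefully replace them modulo $D(Q^{(D)})$ and use the ideal properties to reach an honestly disjoint configuration. Closely tied to this is proving that $\nu$ is Radon on $\partial X\setminus F$ rather than merely on $\mathcal C$.
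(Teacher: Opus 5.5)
Your route differs from the paper's. The paper does not argue from scratch: it transfers \cite[Corollary~4.7]{KLSSW23}, using that for $c=0$ the form $Q^{(N)}$ is the active main part of $Q^{(D)}$ with vanishing killing part, and that continuous extensions to $\partial X$ replace the quasi-continuous versions used there. Your skeleton (positivity and locality of $q'$, then a Riesz argument) is sensible, but two load-bearing steps fail as written.

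First, the Ouhabaz criteria do not give $Q\ge Q^{(N)}$. They compare $Q$ and $Q^{(N)}$ only on pairs of nonnegative functions. For $f=f_+-f_-$ you would need $Q(f_+,f_-)\le Q^{(N)}(f_+,f_-)=-\sum_{x,y}b(x,y)f_+(x)f_-(y)$, which is not automatic for a nonlocal form. Already on two points, $Q_A(f)=a(f_1-f_2)^2+(b-a)(f_1^2+f_2^2)$ with $0\le a<b$ satisfies Ouhabaz's criterion for domination by $b(f_1-f_2)^2$, yet $Q_A(f)<b(f_1-f_2)^2$ for $f=(1,-1)$. What the two criteria together do give is $Q=Q^{(D)}$ on $D(Q^{(D)})$. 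From there, $Q\ge Q^{(N)}$ needs $c=0$ and the argument via \cite[Lemma~6.7]{Sch20} from the proof of Corollary~\ref{coro:discrete dirichlet forms without killing}. This must come first, since Lemma~\ref{lemma:q harmonicity}, the form $q'$ and Proposition~\ref{theorem:traces} all presuppose $Q^{(N)}\le Q\le Q^{(D)}$.

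Second, your locality argument does not work. Domination by $Q^{(D)}$ constrains $Q$ only on $D(Q^{(D)})=\ker\gamma$, so it gives no upper bound on $Q(u,v)$ when $u,v$ have nonzero traces. Moreover, disjoint supports in $X$ do not make $Q^{(N)}(u,v)$ vanish on a graph. The missing idea is that $q'={\rm Tr}\,Q-q^{DN}$ is Markovian (Proposition~\ref{theorem:traces}). For $\varphi,\psi\ge0$ with $\varphi\psi=0$ one has $|\varphi-\psi|=\varphi+\psi$, so $q'(\varphi+\psi)\le q'(\varphi-\psi)$ gives $q'(\varphi,\psi)\le0$. Together with your positivity argument (take $u=H\varphi$, $v=H\psi$), this yields locality at once, and the obstacle you anticipate disappears.

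Third, the passage from $\mathcal C$ to all of $D(Q)$ is not justified. Kasue's theorem only gives convergence of traces in $L^2(\partial X,\mu)$. So form-norm density shows $\gamma f=0$ merely $\mu$-a.e.\ on your $F$, whereas (ii) needs $f=0$ at every point of $F$. Likewise, Fatou for $\int|\gamma f_n|^2\,d\nu$ needs $\gamma f_n\to\gamma f$ $\nu$-a.e., which you lack because the Riesz measure $\nu$ may be singular to $\mu$; Fatou would in any case give only an inequality. This is exactly where \cite{KLSSW23} uses quasi-continuity and measures not charging polar sets.

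Here you can bypass this. Let $F$ be the common zero set of $\gamma D(Q)$, and build $\nu$ on $\partial X\setminus F$ from bounded traces with compact support there. For bounded $f\in D(Q)$, use $f_\varepsilon={\rm sgn}(f)\,(|f|-\varepsilon)_+$. Its trace has compact support in $\partial X\setminus F$, $f_\varepsilon\to f$ in form norm, and $|\gamma f_\varepsilon|\uparrow|\gamma f|$ pointwise as $\varepsilon\downarrow0$. Monotone convergence then gives $Q(f)=Q^{(N)}(f)+\int|f|^2\,d\nu$; truncate to handle unbounded $f$.

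The reverse inclusion of domains also needs an argument, since $D(Q)=\{f\mid\gamma f\in D({\rm Tr}\,Q)\}$ does not identify $D({\rm Tr}\,Q)$. Use that $D(Q)$ is an ideal in $D(Q^{(N)})$, together with closedness of $Q$. Concretely, write $f_\varepsilon=f_\varepsilon\chi+f_\varepsilon(1-\chi)$ with $\chi\in D(Q)$, $0\le\chi\le1$, $\chi=1$ on $\{|\gamma f|\ge\varepsilon\}$; the second term lies in $\ker\gamma=D(Q^{(D)})$ by Corollary~\ref{coro:kernel gamma}.

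Finally, in (ii)$\Rightarrow$(i) closedness and the Markov property are hypotheses, so your Fatou argument there is superfluous and has the same $\mu$ versus $\nu$ defect. The density of $D(Q)\cap C_c(X\cup\partial X)$, on the other hand, is asserted rather than shown.
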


\begin{proof}
 This is basically the content of \cite[Corollary~4.7]{KLSSW23}. Here, we only explain how to translate the results (including how to address small technicalities) and why one assumption needed in \cite[Corollary~4.7]{KLSSW23} can be dropped in our case.  

 \cite[Corollary~4.7]{KLSSW23} characterizes all Dirichlet forms sandwiched between $Q^{(D)}$ and its active main part $(Q^{(D)})^{(M)}$ under the assumption that its killing part $(Q^{(D)})^{(k)}$ vanishes.  \cite[Example~3.10 and Example~3.16]{Schmi2} show that the active main part of $Q^{(D)}$ is given by
 $$D((Q^{(D)})^{(M)}) = \{f \in \ell^2(X,m) \mid \sum_{x,y \in X} b(x,y) (f(x) - f(y))^2 < \infty\} $$
 with 
 $$(Q^{(D)})^{(M)}(f) = {\frac{1}{2}}\sum_{x,y \in X} b(x,y) (f(x) - f(y))^2  $$
 and its killing part is given by $D((Q^{(D)})^{(k)})= D((Q^{(D)})^{(M)}) \cap \ell^2(X,c)$ with 
 $$(Q^{(D)})^{(k)}(f) = \sum_{x \in X}f(x)^2 c(x). $$
 In particular, $c = 0$ implies $(Q^{(D)})^{(k)} = 0$ and $Q^{(N)} = (Q^{(D)})^{(M)}$, so that indeed \cite[Corollary~4.7]{KLSSW23} characterizes all Dirichlet forms sandwiched between $Q^{(D)}$ and $Q^{(N)}$ via a representation as in (ii). 
 Hence, in principle, our theorem is proven. There are however some small technicalities, which upon closer inspection do not pose issues:
 \begin{itemize}
  \item The boundary used in \cite{KLSSW23} is smaller than the one we consider here (there is a continuous surjection of our boundary to the one in \cite{KLSSW23}), because the algebra used to construct the boundary in \cite{KLSSW23} can be strictly smaller than $D(Q^{(N)}) \cap \ell^\infty(X)$  (for technical reasons it is a a closure of a certain countably generated subalgebra of $D(Q^{(N)}) \cap \ell^\infty(X)$).  However, the proof given in \cite{KLSSW23}  works for our boundary as well.   
  \item In the representation in \cite[Corollary~4.7]{KLSSW23} the term $\int_{\partial X} |\tilde f|^2 d\nu$ appears, where $\tilde f$ is a quasi-continuous modification of $f \in D(Q)$. But $Q$ being sandwiched between $Q^{(D)}$ and $Q^{(N)} = (Q^{(D)})^{(M)}$ implies   $D(Q) \subset D((Q^{(D)})^{(M)}) = D(Q^{(N)})$, see \cite[Theorem~2.6]{KLSSW23}. In particular, any $f \in D(Q)$ is automatically continuous on $X \cup \partial X$. Therefore, we have a canonical quasi-continuous modification, namely the continuous extension of $f$ to $\partial X$. Moreover, in \cite{KLSSW23} the measure $\nu$ is assumed to be a  measure that does not charge sets of $Q^{(N)}$-capacity zero, because this implies that the integral $\int_{\partial X} |\tilde f|^2 d\nu$ does not depend on the choice of the quasi-continuous modification $\tilde f$ of $f$. Since we work with the continuous extensions, these regularity assumptions can be dropped in the proof of (ii) $\Rightarrow$ (i) of \cite[Corollary~4.7]{KLSSW23}.  
  \end{itemize}
  %
%
The ``in particular''-part follows from Green's formula and $g = 0$ on $\partial X$ if $g \in C_c(X)$.
%
\end{proof}

\begin{rem}
(a) In the case of Dirichlet forms generated by Laplacians on domains in Euclidean space the previous theorem  was first established by Arendt and Warma in \cite{AW03}.

(b) We assumed $m(X) < \infty$, because together with $c = 0$ it implies that ${\rm ran}\, \gamma_0$ is dense in $C(\partial X)$, which is crucial in the proof of \cite[Corollary~4.7]{KLSSW23}.  If $m(X) = \infty$, then all the results of the previous theorem hold true with $\partial X$ replaced by $\partial X \setminus \{\infty\}$, where $\infty$ is the point in $\partial X$ discussed in Lemma~\ref{lemma:does not vanish on remaining part of boundary}.
(c) The set $F$ and the measure $\nu$ (restricted to  $X \setminus F$) are uniquely determined up to sets of  $Q^{(M)}$-capacity zero, see \cite[Corollary~4.7]{KLSSW23}.
  
(d) In view of the decomposition discussed in Corollary~\ref{coro:dirichlet forms via boundary}, sandwiched Dirichlet forms correspond to Markovian forms $q'$ on $L^2(\partial X)$ that are induced by measures and come with a domain of functions vanishing on a closed subset of $\partial X$.
\end{rem}


\subsection{A normal derivative and boundary conditions} 
 In this subsection we define the normal derivative of a function as the functional (respectively function on the boundary), which makes Green's formula valid for functions not having compact support. With its help we describe several boundary conditions, leading to Markovian realizations of the discrete Laplacian.

For $f \in D(Q^{(N)})$ with $\cL f \in \ell^2(X,m)$, we introduce the functional 
$$\partial_n f \colon D(Q^{(N)}) \to \R, \quad \partial_n f (g) =  Q^{(N)}(f,g) - \as{\cL f,g}.$$
If $g \in C_c(X)$, then the previously discussed version of Green's formula yields $\partial_n f(g)=  0$. In comparison with Green's formula on open subsets of Euclidean space, $\partial_n f$  should be seen as a distributional version of the normal derivative of $f$.

Sometimes $\partial_n f$ can be interpreted as a function on $\partial X$. This is discussed next.  Let $\mu$ be a harmonic measure on $\partial X$.  By the density of $\ran \gamma$ in $L^2(\partial X)$, there exists at most one $\varphi \in L^2(\partial X)$ such that
$$\int_{\partial X} \varphi g d\mu = \partial_n f (g), \qquad g \in D(Q^{(N)}).$$
In this case, we call 
$$\partial_{n,\mu} f := \varphi$$
the {\em normal derivative} of $f$ with respect to $\mu$  and denote by $D(\partial_{n,\mu})$ the set of all $f \in D(Q^{(N)})$, whose normal derivative is given by a function in this sense. By the equivalence of the harmonic measures, the set $D(\partial_{n,\mu})$ does not depend on the choice of $\mu$ but the function $\partial_{n,\mu} f$ does. More precisely, if $\mu,\mu'$ are harmonic measures, then
$$\partial_{n,\mu'} f = \frac{d \mu}{d\mu'} \partial_{n,\mu} f.$$
We warn the reader that not for every $f \in D(Q^{(N)})$ with $\mathcal L f \in \ell^2(X,m)$ the normal derivative needs to be given by a function.

%

%

Next we introduce Robin boundary conditions via an auxiliary function $\beta$ on the boundary.  Let $\beta \colon \partial X \to [0,\infty]$ be measurable and let $$ F_\beta = {\rm supp}\, (1_{\{\beta = \infty\}}  \cdot \mu) ,$$ i.e., $x \in F_\beta$ if and only if $\mu(\{\beta = \infty\} \cap U) > 0$ for all open neighborhoods $U$ of $x$. We let $D(L^{\beta})$ be the set of all functions $f \in D(Q^{(N)})$ satisfying $\cL f \in \ell^2(X,m)$, $f \in \mathcal L^2(\partial X,\beta \mu)$ and  the boundary conditions

$$\begin{cases}
  f = 0  &\text{on } F_\beta,\\
(\beta + \partial_n) f = 0 &\text{on } \partial X \setminus F_\beta,
  \end{cases}
$$
where the equation $(\beta + \partial_n) f = 0$ on $\partial X \setminus F_\beta$ is to be understood as follows: For all $g \in D(Q^{(N)}) \cap \mathcal L^2(\partial X, \beta \mu)$ with $g = 0$ on $F_\beta$, the equality 
$$\partial_n f(g) +  \int_{\partial X} fg \beta d\mu = 0 $$
holds. We define the Laplacian with {\em generalized Robin boundary conditions} $L^{\beta}$ as the restriction of $\cL$ to the domain $D(L^{\beta})$.

\begin{rem}(a)   If  $\beta \in L^\infty(\partial X)$, $ \beta\ge0 $, then $F_\beta = \emptyset$ and $f \in  \mathcal L^2(\partial X,\beta \mu)$ for all $f \in D(Q^{(N)})$ follows from the continuity of the trace.  In this case,  the boundary condition simply becomes $f \in D(\partial_{n,\mu})$ (i.e. the trace of $f$ is given by a function) and $\beta f + \partial_{n,\mu} f = 0$. This can be seen as a discrete analogue of Robin boundary conditions.  In particular, for $\beta = 0$, we obtain Neumann boundary conditions.

 (b) If $\beta = \infty$, we obtain $F_\infty = \partial X$ and so any $f \in D(L^\infty)$ must satisfy $f = 0$ on $\partial X$. Moreover, any  $g \in D(Q^{(N)})$ with $g = 0$ on $F_\infty = \partial X$ belongs to $\ker \gamma = D(Q^{(D)})$. Using Green's formula for compactly supported functions and an approximation of $g \in D(Q^{(D)})$ with finitely supported functions yields $\partial_n f(g)  = 0$. Hence, the condition $(\beta + \partial_n) f = 0$ on $\partial X \setminus F_\infty$ already follows from $f = 0$ on $\partial X$  and we obtain
 $$D(L^\infty) = \{f \in D(Q^{(N)}) \mid \cL f \in \ell^2(X,m) \text{ with } f = 0 \text{ on } \partial X\}.$$
   This corresponds to Dirichlet boundary conditions.
\end{rem}
 
Next, we show that $L^\beta$ is a self-adjoint operator induced by a Dirichlet form. 

%

%

\begin{lemm}\label{lemma:robin}
 The quadratic form $Q^\beta$ defined by 
 $$D(Q^{\beta}) = \{f \in D(Q^{(N)}) \mid  \int_{\partial X} |f|^2 \beta d\mu < \infty\}$$
 and 
 $$Q^{\beta}(f) = Q^{(N)}(f) + \int_{\partial X} |f|^2 \beta d\mu$$
 is a Dirichlet form. Moreover, $f \in D(Q^{\beta})$ if and only if $f \in D(Q^{(N)})$ with $f  = 0$ on $F_\beta$ and $ \int_{\partial X \setminus F_\beta} |f|^2 \beta d\mu < \infty$.

\end{lemm}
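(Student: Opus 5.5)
We have to show that $Q^\beta$ is symmetric, positive, densely defined, closed and Markovian, and then prove the description of its domain. The plan is to view $Q^\beta$ as the sum of $Q^{(N)}$ and the boundary form $q_\beta(\varphi) = \int_{\partial X} |\varphi|^2 \beta \, d\mu$ composed with the trace $\gamma$, and to check each property separately.

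\emph{Positivity, density and the Markov property.} Symmetry and positivity are clear. Since $C_c(X) \subset D(Q^{(D)}) = \ker \gamma$ (Corollary~\ref{coro:kernel gamma}), every $f \in C_c(X)$ has $\gamma f = 0$ and hence lies in $D(Q^\beta)$. So $Q^\beta$ is densely defined, and on $C_c(X)$ it coincides with $Q^{(N)}$. For the Markov property, let $C$ be a normal contraction and $f \in D(Q^\beta)$. Then $Q^{(N)}(Cf) \le Q^{(N)}(f)$. On $\partial X$ we have $|C(\gamma f)| \le |\gamma f|$ wherever $\gamma f$ is finite, and by Kasue's theorem $\gamma f$ is finite $\mu$-a.e. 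It then remains to know $\gamma(Cf) = C(\gamma f)$. For monotone $C$ this was stated in the previous section. For general $C$ I would first take $f$ bounded, where $\gamma_0$ commutes with composition by continuous functions, since the extension to $\hat X$ is continuous and $X$ is dense. For unbounded $f$ I would use the truncations $D_n f = (f\wedge n)\vee(-n)$ together with the pointwise convergence in Proposition~\ref{proposition:unbounded extension}. This gives $Q^\beta(Cf)\le Q^\beta(f)$.

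\emph{Closedness.} I would show that $f \mapsto Q^\beta(f)$ is lower semicontinuous on $\ell^2(X,m)$, or equivalently that $D(Q^\beta)$ is complete in the form norm. Take a $Q^\beta_1$-Cauchy sequence $(f_n)$. It is $Q^{(N)}_1$-Cauchy, so it converges to some $f \in D(Q^{(N)})$. By continuity of the trace (Corollary~\ref{coro:continuity trace}), $\gamma f_n \to \gamma f$ in $L^2(\partial X)$, so a subsequence converges $\mu$-a.e. The sequence $(\gamma f_n)$ is Cauchy in $L^2(\beta\mu)$, and its limit there must coincide $\beta\mu$-a.e. with $\gamma f$, by Fatou along the a.e.-convergent subsequence. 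Hence $f \in D(Q^\beta)$ and $f_n \to f$ in $Q^\beta_1$. This step needs care only because $\beta\mu$ may not be finite, and Fatou handles that.

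\emph{Characterization of the domain.} Write $\partial X = F_\beta \cup (\partial X\setminus F_\beta)$. On $\partial X\setminus F_\beta$ the set $\{\beta=\infty\}$ is $\mu$-null, because $\partial X\setminus F_\beta$ is open and is covered by open sets $U$ with $\mu(\{\beta=\infty\}\cap U)=0$; I would pass to a countable subcover or use inner regularity of the Radon measure $\mu$. So one direction reduces to showing that $\int|f|^2\beta\,d\mu<\infty$ forces $f=0$ on $F_\beta$. Finiteness of the integral gives $f=0$ $\mu$-a.e.\ on $\{\beta=\infty\}$. Now suppose $x\in F_\beta$ with $f(x)\neq0$. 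Since $f$ is continuous on $\hat X$ with values in $\overline\R$, there is an open $U\ni x$ on which $|f|>\varepsilon$ for some $\varepsilon>0$. But $\mu(\{\beta=\infty\}\cap U)>0$, a contradiction. The converse direction is immediate: if $f=0$ on $F_\beta$, the integral over $F_\beta$ vanishes, since $0\cdot\infty=0$. I expect the main obstacle to be the commutation $\gamma(Cf)=C(\gamma f)$ for non-monotone contractions and unbounded $f$ in the Markov step. If the truncation argument fails, I would instead prove compatibility with the contractions $f\mapsto(f\wedge1)_+$ only, which suffices for being a Dirichlet form by the remark in the abstract framework.
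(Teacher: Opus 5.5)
Your proposal is correct and follows essentially the same route as the paper: closedness comes from continuity of the trace together with Fatou's lemma. The domain description comes from continuity of $f$ on $\hat X$, which gives a neighbourhood $U$ of a point of $F_\beta$ with $|f|\ge\varepsilon$ on $U$ and $\mu(U\cap\{\beta=\infty\})>0$. For the Markov property, which the paper calls clear, you do not need $\gamma(Cf)=C(\gamma f)$: since $|Cf|\le|f|$ on the dense set $X$ and both $|\widehat{Cf}|$ and $|\hat f|$ are continuous into $[0,\infty]$, you get $|\widehat{Cf}|\le|\hat f|$ on all of $\partial X$, so the worry about non-monotone contractions and the fallback are unnecessary.
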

\begin{proof}
 $Q^\beta$ is closed: Lower semicontinuity follows from standard arguments using the continuity of the trace and Fatou's lemma.
 
 $Q^\beta$ is Markovian: This is clear. 
 
 The formula for the domain: Clearly, it suffices to show the ``only if'' statement.   For $f \in D(Q^{\beta})$, it suffices to verify $f = 0$ on $F_\beta$. Let $x \in \partial X$ with $f(x) \neq 0$. Since $f$ is continuous, there exist $\varepsilon > 0$ and an open neighborhood $U$ of $x$ with $|f|^2 \geq \varepsilon$ on $U$. If $x \in F_\beta$, then by definition $\mu(U\cap \{\beta  = \infty\}) > 0$. We infer 
 $$\int_{\partial X} |f|^2 \beta d\mu \geq \int_{U \cap \{\beta = \infty\}} |f|^2 \beta d\mu \geq \varepsilon \mu(U \cap \{\beta = \infty\}) \cdot \infty = \infty,  $$
 a contradiction. Hence, $f = 0$ on $F_\beta$. 
\end{proof}

\begin{thm}[Robin boundary conditions]\label{theorem:robin}
 $L^{\beta}$ is the self-adjoint operator of the Dirichlet form $Q^\beta$, which was introduced in the previous lemma.
%
 %
 In particular, we have $Q^{(D)} = Q^\infty$ and $Q^{(N)} = Q^0$ and the corresponding operators $L^{(D)}$, $L^{(N)}$ are the restrictions of $\cL$ to
 $$D(L^{(D)}) = \{f \in D(Q^{(N)}) \mid \cL f \in \ell^2(X,m) \text{ with } f = 0 \text{ on } \partial X\}$$
 and
  \begin{align*}
  D(L^{(N)}) &= \{f \in D(Q^{(N)}) \mid \cL f \in \ell^2(X,m) \text{ with } \partial_n f = 0 \} = \ker \partial_{n,\mu}.
  \end{align*}
\end{thm}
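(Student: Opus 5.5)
The plan is to identify the operator $L_\beta$ associated with $Q^\beta$ through the defining relation of the form–operator correspondence. By definition $f\in D(L_\beta)$ with $L_\beta f=h$ if and only if $f\in D(Q^\beta)$ and
$$Q^\beta(f,g)=\langle h,g\rangle \quad\text{for all } g\in D(Q^\beta).$$
We will show that $D(L_\beta)=D(L^\beta)$ and that $L_\beta f=\cL f$ there.

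First, for $f\in D(L_\beta)$ we test with $g\in C_c(X)$. Such $g$ belong to $D(Q^\beta)$ because $g=0$ on $\partial X$. Green's formula gives $Q^{(N)}(f,g)=\langle \cL f,g\rangle$, and the boundary integral vanishes. Hence $h=\cL f$, and in particular $\cL f\in\ell^2(X,m)$.

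Substituting this back, the defining relation becomes
$$Q^{(N)}(f,g)-\langle\cL f,g\rangle+\int_{\partial X} fg\,\beta\,d\mu=0,$$
that is, $\partial_n f(g)+\int fg\beta\,d\mu=0$ for all $g\in D(Q^\beta)$. Lemma~\ref{lemma:robin} shows that $D(Q^\beta)$ is exactly the set of $g\in D(Q^{(N)})\cap\mathcal L^2(\partial X,\beta\mu)$ with $g=0$ on $F_\beta$. The same lemma, applied to $f$, gives $f=0$ on $F_\beta$ and $f\in\mathcal L^2(\partial X,\beta\mu)$. Thus $f\in D(L^\beta)$.

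Conversely, let $f\in D(L^\beta)$. By Lemma~\ref{lemma:robin}, $f\in D(Q^\beta)$, since $f=0$ on $F_\beta$ and the boundary integral over $\partial X\setminus F_\beta$ is finite. The weak boundary condition, read in reverse, gives $Q^\beta(f,g)=\langle\cL f,g\rangle$ for all $g\in D(Q^\beta)$. Therefore $f\in D(L_\beta)$ with $L_\beta f=\cL f$.

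The main subtlety is the bookkeeping with the value $\infty$ of $\beta$ in the integrals. On $F_\beta$ the integrand $fg\beta$ must be read as $0$, because $f=g=0$ there. There is also the set $\{\beta=\infty\}\setminus F_\beta$, which is $\mu$-null by the definition of the support, so it does not affect any integral. Finally, $\int fg\beta\,d\mu$ converges absolutely by Cauchy–Schwarz in $L^2(\beta\mu)$. With these observations the two weak formulations coincide exactly.

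For the special cases we use the descriptions of the two forms. For $\beta=0$ we have $F_0=\emptyset$ and $Q^0=Q^{(N)}$. The domain then consists of those $f$ with $\cL f\in\ell^2$ and $\partial_n f(g)=0$ for all $g\in D(Q^{(N)})$. This condition says precisely that $\partial_n f$ is represented by the function $0$, i.e. $f\in\ker\partial_{n,\mu}$.

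For $\beta=\infty$ we have $F_\infty=\partial X$. Lemma~\ref{lemma:robin} gives $D(Q^\infty)=\{f\in D(Q^{(N)})\mid f=0 \text{ on } \partial X\}=\ker\gamma$, and Corollary~\ref{coro:kernel gamma} identifies this with $D(Q^{(D)})$. The two forms agree on this domain, since the boundary term vanishes there. Hence $Q^\infty=Q^{(D)}$.

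The domain of $L^{(D)}$ is then the one computed for $D(L^\infty)$ in the preceding remark, where the boundary condition was seen to follow from $f=0$ on $\partial X$ by approximating with $C_c(X)$ functions.
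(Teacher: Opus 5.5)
Your proof is correct and follows the paper's argument essentially step by step. You test with $C_c(X)$ and use Green's formula to get $L f=\cL f\in\ell^2(X,m)$, identify $D(Q^\beta)$ with the admissible test functions in the weak Robin condition via Lemma~\ref{lemma:robin}, reverse the computation for the converse, and obtain the special cases from $F_0=\emptyset$, Corollary~\ref{coro:kernel gamma} and the remark on $D(L^\infty)$; your explicit handling of $\beta=\infty$ (the convention $0\cdot\infty=0$ and absolute convergence via Cauchy--Schwarz in $L^2(\beta\mu)$) spells out details the paper leaves implicit.
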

\begin{proof}
 $L^\beta$ is the self-adjoint operator associated to the Dirichlet form $Q^\beta$: Let $L$ be the self-adjoint operator induced by $Q^\beta$.  For $f \in D(L)$, we have
 $$\as{Lf, g} = Q^\beta(f,g) = Q^{(N)}(f,g) + \int_{\partial X} fg \beta d\mu$$
 for all $g \in D(Q^\beta)$. Since $C_c(X) \subset D(Q^\beta) \cap \ker \gamma$, this computation and Green's formula yield $  \cL f = Lf \in \ell^2(X,m)$. By the previous lemma we also have $f = 0$ on $F_\beta$. Any $g \in D(Q^{(N)}) \cap \mathcal L^2(\partial X,\beta \mu)$ with $g = 0$ on $F_\beta$ belongs to $D(Q^\beta)$ and for such $g$ we obtain 
 $$\partial_n(f)(g) =  Q^{(N)}(f,g)  -  \as{\cL f, g} = Q^{(N)}(f,g) - Q^\beta(f,g) = -  \int_{\partial X} fg \beta d\mu, $$
 showing $(\partial_n  + \beta) f = 0$ on $X \setminus F_\beta$  (in the sense of our definition).
 
 Conversely, assume that $f \in D(L^\beta)$. The assumption $f \in \mathcal L^2(\partial X,\beta\mu)$ implies $f \in D(Q^\beta)$. Moreover, by the previous lemma any $g \in D(Q^\beta)$ satisfies $g = 0$ on $F_\beta$ and $g \in \mathcal L^2(\partial X,\beta \mu)$. Hence, the assumption $(\partial_n + \beta) f = 0$ on $X \setminus F_\beta$ implies 
 $$\partial_n(f)(g) = - \int_\beta fg \beta d\mu$$
 for all $g \in D(Q^\beta)$. Spelled out, this implies $\cL f \in \ell^2(X,m)$ and 
 $$Q^\beta(f,g) = \as{\cL f,g}$$
 for all $g \in D(Q^\beta)$, which yields $f \in D(L)$.

 $Q^{(N)} = Q^0$ follows directly from the definitions and $Q^{(D)} = Q^\infty$ is a consequence of Corollary~\ref{coro:kernel gamma}. The statements on the corresponding operators follow from the definition of $L^\beta$ and $\ker \partial_n = \ker \partial_{n,\mu}$.
\end{proof}


\begin{corol}[Sandwiched forms and Robin boundary conditions]
Assume that $c = 0$ and $m(X) < \infty$. 
\begin{enumerate}[(a)]
 \item Let $\beta \colon \partial X \to [0,\infty]$ be measurable with $\beta \in \mathcal L^1_{\rm loc}(\partial X \setminus F_\beta)$. Then $Q^\beta$ is sandwiched between $Q^{(D)}$ and $Q^{(N)}$.
 \item Let $Q$ be a Dirichlet form on $\ell^2(X,m)$ that is sandwiched between $Q^{(D)}$ and $Q^{(N)}$ and assume that  $D(Q) \cap C_c(X \cup \partial X)$ is dense in $D(Q)$ with respect to the form norm. Moreover, let $L$ be the self-adjoint generator of $Q$. If there exists $u \in D(L) \cap D(\partial_{n,\mu})$ such that $u > 0$ on $\partial X$, then  there exists a measurable $\beta \colon \partial X \to [0,\infty)$ with $\beta \in \mathcal L^1(\partial X)$ such that $Q = Q^\beta$ and $L = L^\beta$.
\end{enumerate}
\end{corol}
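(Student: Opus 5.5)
Both parts will be derived from the Arendt--Warma theorem (Theorem~\ref{theorem:arendt-warma}) combined with Lemma~\ref{lemma:robin} and Theorem~\ref{theorem:robin}.

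For (a), the plan is to verify condition (ii) of the Arendt--Warma theorem with $F = F_\beta$ and $\nu = \beta\mu$ restricted to $\partial X\setminus F_\beta$. First, $F_\beta$ is closed, being the support of a measure. Next, the local integrability $\beta \in \mathcal L^1_{\rm loc}(\partial X\setminus F_\beta)$ is exactly what makes $\beta\mu$ a Radon measure on the locally compact space $\partial X \setminus F_\beta$: it is finite on compacts, and inner/outer regularity is inherited from $\mu$. On $\{\beta=\infty\}\setminus F_\beta$ we have $\mu$-measure zero, since $\mu(\{\beta = \infty\}\cap (\partial X\setminus F_\beta))=0$ by the definition of the support (cover by countably many or use inner regularity on compacts). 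Hence $\int_{\partial X}|f|^2\beta\,d\mu = \int_{\partial X\setminus F_\beta}|f|^2\,d\nu$ for $f$ vanishing on $F_\beta$. The domain description in Lemma~\ref{lemma:robin} then gives exactly the representation in (ii), so (ii)$\Rightarrow$(i) yields that $Q^\beta$ is sandwiched.

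For (b), apply (i)$\Rightarrow$(ii) to obtain a closed $F$ and a Radon measure $\nu$ on $\partial X\setminus F$ with $Q(f) = Q^{(N)}(f) + \int |f|^2 d\nu$ on the stated domain. The task is to show $F=\emptyset$ and $\nu = \beta\mu$ with $\beta\in\mathcal L^1(\partial X)$. Since $u\in D(L)\subset D(Q)$ and $u>0$ on $\partial X$, while $u=0$ on $F$, we get $F = \emptyset$. Then, for $g\in D(Q)$, the Green-type computation in the proof of Theorem~\ref{theorem:robin} gives
$$\partial_n u(g) = Q^{(N)}(u,g) - \as{\cL u, g} = Q^{(N)}(u,g) - Q(u,g) = -\int_{\partial X} u g\, d\nu,$$
using that $L$ is a restriction of $\cL$. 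Because $u\in D(\partial_{n,\mu})$, the left-hand side equals $\int_{\partial X} \partial_{n,\mu} u\, g\, d\mu$. Hence $\int ug\,d\nu = -\int \partial_{n,\mu}u\, g\,d\mu$ for all $g\in D(Q)$.

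The key step, and the expected obstacle, is upgrading this identity to an identity of measures, $u\nu = -\partial_{n,\mu}u\,\mu$. For this I would use the assumed density of $D(Q)\cap C_c(X\cup\partial X)$ in $D(Q)$. Together with the lattice and algebra properties of $D(Q^{(N)})\cap\ell^\infty$ and Stone--Weierstraß (as in Lemma~\ref{lemma:range of gamma}, where $\ran\gamma_0$ is dense in $C(\partial X)$ because $m(X)<\infty$, $c=0$), this should show that traces of elements of $D(Q)\cap \ell^\infty$ are uniformly dense in $C(\partial X)$. Both sides are finite signed Radon measures: $\partial_{n,\mu}u\in L^2\subset L^1(\mu)$, and $u$ is bounded on the compact set $\partial X$, with $\int u\,d\nu<\infty$ obtained by testing with $g=u$ suitably truncated. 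Uniqueness in the Riesz representation then gives the measure identity. Setting $\beta = -\partial_{n,\mu}u/u$, we have $\beta\ge 0$ $\mu$-a.e. because $\nu\ge0$, and $\beta\in \mathcal L^1(\partial X)$ because $u$ is continuous and positive on the compact $\partial X$, hence bounded below by a positive constant. Thus $\nu=\beta\mu$ and $Q=Q^\beta$, and finally $L=L^\beta$ follows from Theorem~\ref{theorem:robin}.
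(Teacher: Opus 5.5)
\textbf{Verdict.} Your overall plan matches the paper's. Part (a) checks condition (ii) of Theorem~\ref{theorem:arendt-warma} with $F=F_\beta$ and $\nu=\beta\mu$, using Lemma~\ref{lemma:robin}. Part (b) runs Arendt--Warma, gets $F=\emptyset$ from $u>0$, derives the Green-type identity $\int\partial_{n,\mu}u\,g\,d\mu=-\int ug\,d\nu$, and uses Riesz uniqueness to obtain $\beta=-\partial_{n,\mu}u/u$. There is, however, a gap at exactly the step you flagged: you never establish that the traces of admissible test functions $g$ are uniformly dense in $C(\partial X)$, and the tool you point to cannot do this.

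\textbf{Why the density hypothesis does not help.} The form-norm density of $D(Q)\cap C_c(X\cup\partial X)$ in $D(Q)$ has already been used up by the implication (i)$\Rightarrow$(ii) of Theorem~\ref{theorem:arendt-warma}. Density of a subspace of $D(Q)$ inside $D(Q)$ says nothing about how large $\gamma D(Q)$ is inside $C(\partial X)$. Similarly, the lattice and algebra properties you cite belong to the larger space $D(Q^{(N)})\cap\ell^\infty(X)$. To apply Stone--Weierstrass to $\gamma(D(Q)\cap\ell^\infty(X))$ you would need to know that $D(Q)$ contains enough bounded functions to separate points of $\partial X$, and that is precisely what is in question.

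\textbf{The missing observation: $\nu$ is finite.} Since $u\in D(Q)$, the domain description gives $\int_{\partial X}u^2\,d\nu<\infty$. Since $u$ is continuous with values in $\overline{\R}$ and positive on the compact set $\partial X$, we have $\delta:=\inf_{\partial X}u>0$. Hence $\nu(\partial X)\le\delta^{-2}\int u^2\,d\nu<\infty$; alternatively, $\nu$ is Radon on the compact space $\partial X\setminus F=\partial X$.

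\textbf{How this closes the gap.}
Every $f\in D(Q^{(N)})\cap\ell^\infty(X)$ then satisfies $\int|f|^2\,d\nu\le\|f\|_\infty^2\,\nu(\partial X)<\infty$. By the Arendt--Warma domain description this gives $D(Q^{(N)})\cap\ell^\infty(X)\subset D(Q)$, so your identity holds for all such $g$. Their traces form $\ran\gamma_0$, which is dense in $C(\partial X)$ by Lemma~\ref{lemma:range of gamma}(a), because $c=0$ and $m(X)<\infty$ give $1\in D(Q^{(N)})$. Riesz uniqueness then applies to the finite signed measures $u\nu$ and $-\partial_{n,\mu}u\,\mu$.

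\textbf{A second soft spot.} Your claim that $u$ is bounded on $\partial X$ is unjustified: the extension of an element of $D(Q^{(N)})$ may take the value $+\infty$ on a $\mu$-null set. It is also unnecessary, since once $\nu$ is finite you have $u\in L^2(\partial X,\nu)\subset L^1(\partial X,\nu)$ directly, with no truncation argument.

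With these fixes, the rest of your argument goes through as in the paper: $\beta\ge0$, $\beta\in\mathcal L^1(\partial X)$ from $u\ge\delta$, and $Q=Q^\beta$, $L=L^\beta$ via Theorem~\ref{theorem:robin}.
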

\begin{proof}
 (a)  Due to the integrability assumption on $\beta$, $\beta \mu$ is a Radon measure on $\partial X \setminus F_\beta$. Hence, the statement follows from Theorem~\ref{theorem:arendt-warma} and our characterization of the form domain of $Q^\beta$ in Lemma~\ref{lemma:robin}. 
 
 (b) According to Theorem~\ref{theorem:arendt-warma} there exists a closed set $F \subset \partial X$ and a Radon measure $\nu$ on $\partial X \setminus F$ such that
 $$D(Q) = \{f \in D(Q^{(N)}) \mid f =  0 \text{ on } F \text{ and } \int_{\partial X \setminus F} |f|^2 d\nu < \infty\}$$
 and 
 $$Q(f) = Q^{(N)}(f) + \int_{\partial X} |f|^2 d\nu, \quad f \in D(Q).$$
 Since $u \in D(Q)$ and $u  > 0$ on $\partial X$, we infer $F = \emptyset$. Moreover, the compactness of $\partial X$ and the continuity of $u$ imply $\inf_{\partial X} u > 0$, which yields that $\nu$ is a finite measure. The description of $D(Q)$ and the finiteness of $\nu$ imply $D(Q^{(N)}) \cap \ell^\infty(X) \subset D(Q)$. Using $u \in D(L) \cap D(\partial_{n,\mu})$ and that $L$ is a restriction of $\cL$, for each $f \in D(Q^{(N)}) \cap \ell^\infty(X)$, we infer 
 \begin{align*}
\int_{\partial X} \partial_{n,\mu} u f d\mu =  Q^{(N)}(u,f) -  \as{Lu,f} = Q^{(N)}(u,f) - Q(u,f) = - \int_{\partial X} u f d\nu. 
 \end{align*}
Since $\mu$ and $\nu$ are finite measures, $u \in L^2(\partial X,\nu)$ and  $\partial_{n,\mu} u \in L^2(\partial X)$,  ${\rm ran}\, \gamma_0$ is dense in $C(\partial X)$ and $u > 0$, we infer 
$$\nu = - \frac{\partial_{n,\mu} u}{u} \mu.$$
This implies the statement with $\beta = - (\partial_{n,\mu} u) / u$.
\end{proof}

\section{Dirichlet forms via forms on the boundary}\label{ref-allDF}
In this section we address the question which Dirichlet forms exist on discrete spaces and provide answers both in terms of graphs and of Laplacians.  To set our results in perspective we note that there is a one-to-one correspondence between regular Dirichlet forms and graphs. Indeed, the regular Dirichlet forms on $(X,m)$ are exactly the forms  $Q^{(D)}_{b,c}$, see Theorem~\ref{IG:t:graphs_regular_df}, which is taken from \cite{KL}. This then also gives that any Dirichlet form $Q$ on $(X,m)$ with $C_c (X)\subset  D(Q)$ is an extension of the form $Q^{(D)}_{b,c}$ for a graph $(b,c)$ (as $Q$  is an extension of the closure of its restriction to $C_c (X)$ and this restriction is a  regular Dirichlet form).

Let $(b,c)$ be a graph over $(X,m)$ with associated  $1$-harmonic boundary $\partial X$. We write $\mathfrak{D}(b,c)$ for the set of all Dirichlet forms on $\ell^2(X,m)$ satisfying $Q^{(N)} \leq Q \leq Q^{(D)}$ and we denote by  ${\rm Tr}\, \mathfrak{D}(b,c)$ the set of all Dirichlet forms in the wide sense $q$ on $L^2(\partial X)$ for which $q \geq q^{DN}$ and $q - q^{DN}$ is Markovian (when considered as a quadratic form on the domain  $D(q)$).

\begin{thm} \label{theorem:dirichlet forms associated to a graph}
 Let $(b,c)$ be a graph over $(X,m)$ with associated  $1$-harmonic boundary $\partial X$. For a closed quadratic form $Q$ on $\ell^2(X,m)$, the following assertions are equivalent. 
 \begin{enumerate}[(i)]
  \item $Q \in \mathfrak{D}(b,c)$.
  \item $D(Q) \subset D(Q^{(N)})$ and  there exists $q \in {\rm Tr}\, \mathfrak{D}(b,c)$ such that for all $f \in D(Q)$ we have $\gamma f \in D(q)$ and
  $$ Q(f) = Q^{(N)}(f) + q(\gamma f) - q^{DN}(\gamma f),$$
  as well as 
  $$Q_1(f) =  Q_1^{(D)}(f_0) + q(\gamma f).$$
  Here, $f = f_0 + f_h$ with $f_0 \in D(Q^{(D)})$ and $1$-harmonic $f_h \in D(Q^{(N)})$ is the Royden decomposition of $f$.    
 \end{enumerate}
 More precisely, if $Q \in \mathfrak{D}(b,c)$, then  in (ii) the form $q$ can be chosen as $q = {\rm Tr}\, Q$ and the map
 $$\mathfrak{D}(b,c) \to  {\rm Tr}\, \mathfrak{D}(b,c), \quad Q \mapsto {\rm Tr}\, Q $$
 is a bijection. 
\end{thm}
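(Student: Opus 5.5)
The plan is to handle the two implications separately and then establish the bijection by writing down an explicit inverse map. For (i) $\Rightarrow$ (ii) almost everything is already available. If $Q \in \mathfrak{D}(b,c)$, then $Q \geq Q^{(N)}$ gives $D(Q)\subset D(Q^{(N)})$. We take $q = {\rm Tr}\,Q$. Proposition~\ref{theorem:traces} shows that ${\rm Tr}\,Q$ is a Dirichlet form in the wide sense and that ${\rm Tr}\,Q - q^{DN}$ is Markovian. The inequality ${\rm Tr}\,Q \geq q^{DN}$ follows from $Q\geq Q^{(N)}$: on $\gamma D(Q)$ we have ${\rm Tr}\,Q(\varphi) = Q_1(H\varphi) \geq Q^{(N)}_1(H\varphi) = q^{DN}(\varphi)$, and ${\rm Tr}\,Q$ is $\infty$ off its domain. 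Both identities in (ii) are then exactly Lemma~\ref{lemma:q harmonicity}.

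For (ii) $\Rightarrow$ (i), let $Q$ be closed with the stated representation. First, $Q \geq Q^{(N)}$ holds because $q - q^{DN}\geq 0$ and $D(Q)\subset D(Q^{(N)})$. For $Q\leq Q^{(D)}$, we need $D(Q^{(D)})\subset D(Q)$ with $Q=Q^{(D)}$ there, and this needs care: (ii) only describes $f\in D(Q)$. I would therefore read (ii) in the sense that $D(Q)=\{f\in D(Q^{(N)})\mid \gamma f\in D(q)\}$. I expect to have to argue this from the formula, or else to build it into the definition of the inverse map. With that reading, $f\in D(Q^{(D)})$ has $\gamma f = 0\in D(q)$, so $f\in D(Q)$ and $Q(f)=Q^{(N)}(f)$. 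It remains to show $Q$ is Markovian, and density then follows from $C_c(X)\subset D(Q)$. For this I would write $Q(f) = Q^{(N)}(f) + (q-q^{DN})(\gamma f)$ and use that $\gamma$ commutes with monotone normal contractions: each summand decreases under $f\mapsto Cf$. For non-monotone $C$, apply it first to bounded $f$, using Proposition~\ref{proposition:harmonic extension and operations}, and then approximate via the truncations $D_n$. Closedness of $Q$ gives the lower semicontinuity needed here, as in Case~3 of Proposition~\ref{theorem:traces}. Alternatively, it suffices to check $Q(f\wedge 1)\leq Q(f)$, since $t\mapsto t\wedge 1$ is monotone.

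For the bijection, define $\Phi(q)$ as the form with domain $\{f\in D(Q^{(N)})\mid \gamma f\in D(q)\}$ and value $Q^{(D)}_1(f_0)+q(\gamma f)-\|f\|^2$. The steps are:
\begin{itemize}
\item Closedness of $\Phi(q)$: a $\Phi(q)_1$-Cauchy sequence has Cauchy $f_0$-parts in $Q^{(D)}_1$. Its traces are Cauchy in $q_1$, since $q\geq q^{DN}$ dominates the $L^2(\partial X)$ norm by Kasue's theorem. Since $H$ is $Q^{(N)}_1$-bounded by $q^{DN}$ and $q$ is closed, the $f_h$-parts converge as well. Continuity of $\gamma$ then identifies the limits.
\item Injectivity of $Q\mapsto{\rm Tr}\,Q$: this is immediate from the formula $Q_1(f)=Q_1^{(D)}(f_0)+{\rm Tr}\,Q(\gamma f)$ together with the domain description in Proposition~\ref{theorem:traces}.
\item Surjectivity: for given $q$, the form $\Phi(q)$ lies in $\mathfrak D(b,c)$ by the argument above. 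One also needs ${\rm Tr}\,\Phi(q) = q$, i.e.\ $\gamma D(\Phi(q)) = D(q)$. This needs $D(q)\subset\ran\gamma$, which is not automatic. I would derive it from $q\geq q^{DN}$, reading $D(q)\subset D(q^{DN})=\gamma D(Q^{(N)})$ in the comparison-of-forms convention.
\end{itemize}

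The main obstacle is Markovianity of $\Phi(q)$ for arbitrary normal contractions on unbounded functions. There the trace picks up infinite values, and $q - q^{DN}$ need not be lower semicontinuous. I would handle this by working with the full expression $Q^{(D)}_1(f_0)+q(\gamma f)$, which is lower semicontinuous because $\Phi(q)$ is closed, instead of splitting it.
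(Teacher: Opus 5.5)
Your proposal is correct and follows the paper's proof essentially step by step: Proposition~\ref{theorem:traces} and Lemma~\ref{lemma:q harmonicity} give (i)$\Rightarrow$(ii); $\ker\gamma = D(Q^{(D)})$ and the compatibility of $\gamma$ with monotone normal contractions give (ii)$\Rightarrow$(i); the domain description of Proposition~\ref{theorem:traces} gives injectivity; and surjectivity comes from the form with domain $\{f\in D(Q^{(N)})\mid \gamma f\in D(q)\}$ together with $D(q)\subset D(q^{DN})=\ran\gamma$. You are also somewhat more careful than the paper about ${\rm Tr}\,Q\geq q^{DN}$, density of the domain, and non-monotone contractions. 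Your decision to read (ii) as including $D(Q)=\{f\in D(Q^{(N)})\mid \gamma f\in D(q)\}$ is in fact unavoidable, because it does not follow from the two formulas: for a fixed $x_0\in X$, the restriction of $Q^{(D)}$ to the closed subspace $\{f\in D(Q^{(D)})\mid f(x_0)=0\}$ satisfies them with $q=q^{DN}$ but is not in $\mathfrak{D}(b,c)$. The paper's proof uses the same reading tacitly when it derives $Q\leq Q^{(D)}$ from $\ker\gamma=D(Q^{(D)})$.
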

\begin{proof}
(ii) $\Rightarrow$ (i): The Markov property of $Q^{(N)}$, $q - q^{DN}$ and that the trace map commutes with monotone normal contractions imply that $Q$ is Markovian. Moreover, $Q \geq Q^{(N)}$ follows from $q \geq q^{DN}$ and $Q^{(D)} \geq Q$ follows from $\ker \gamma = D(Q^{(D)})$.

 (i) $\Rightarrow$ (ii) and $q$ can be chosen as $q = {\rm Tr}\, Q$: We saw in Proposition~\ref{theorem:traces} that $q = {\rm Tr}\, Q$ belongs to ${\rm Tr}\, \mathcal{D}(b,c)$. Moreover, for $f \in D(Q)$ Lemma~\ref{lemma:q harmonicity} implies $Q_1(f) = Q^{(D)}_1(f_0) + Q_1(f_h)$.  Using this formula and $f_h = H (\gamma f)$, we directly infer the second formula for $Q$ and 
 \begin{align*}
 Q(f) &= Q^{(N)}(f) + Q_1(f)  - Q^{(N)}_1(f) \\
 &= Q^{(N)}(f) + Q^{(D)}_1(f_0) + Q_1(f_h)  - Q^{(D)}_1(f_0) - Q^{(N)}_1(f_h)\\
 &= Q^{(N)}(f) + {\rm Tr}\, Q(\gamma f) - q^{DN}(\gamma f).  
 \end{align*}

 It remains to show the bijectivity of the trace map on forms. 
 
 Injectivity: Let $Q,Q' \in \mathfrak{D} (b,c)$ with ${\rm Tr}\, Q =  {\rm Tr}\, Q'$. For $f \in D(Q)$, we then obtain $f \in D({\rm Tr}\,Q) = D({\rm Tr}\,Q')$ and Proposition~\ref{theorem:traces} yields $f \in D(Q')$. With this at hand, $Q(f)  = Q'(f)$ follows from the already established formula for $Q,Q'$ in terms of their traces. Interchanging the role of $Q$ and $Q'$ yields $Q = Q'$. 
 
 Surjectivity: We let $q \in {\rm Tr}\, \mathfrak{D}(b,c)$ and define the quadratic form $Q$ by $D(Q) = \{f \in D(Q^{(N)}) \mid \gamma f \in D(q)\}$ and 
 $$Q(f) = Q^{(N)}(f) + q(\gamma f) - q^{DN}(\gamma f). $$
 For $f \in D(Q)$ the definition of $q^{DN}$ yields $Q_1(f) = Q^{(D)}_1(f_0) + q(\gamma f)$.
 
 $Q$ has the Markov property: This follows from the Markov property of $Q^{(N)}$ and $q - q^{DN}$. 
 
 $Q$ is closed: Let $(f_n)$ be $Q_1$-Cauchy. Since $q \geq q^{DN}$,  $(f_n)$ is also $Q^{(N)}_1$-Cauchy and the closedness of $Q^{(N)}$ implies $f_n \to f$ for some $f \in D(Q^{(N)})$ with respect to $Q^{(N)}_1$. The continuity of the trace map implies $\gamma f_n \to \gamma f$ in $L^2(\partial X)$ and the continuity of the Royden decomposition yields $(f_n)_0 \to f_0$ with respect to $Q^{(D)}_1$.  Moreover, the second formula for $Q$ shows that $(\gamma f_n)$ is also $q$-Cauchy and we obtain $\gamma f_n \to \gamma f$ with respect to $q_1$ from the closedness of $q$. Overall, this implies
 $$Q_1(f - f_n) = Q_1^{(D)} (f_0 - (f_n)_0) + q(\gamma f_n - \gamma f) \to 0, \text{ as } n \to \infty,$$
 showing $f_n \to f$ with respect to $Q_1$.

 ${\rm Tr}\, Q = q$: By definition we have $D({\rm Tr}\, Q) \subset D(q)$. For the reverse inclusion, we note that  $q \geq q^{DN}$ implies $D(q) \subset D(q^{DN}) = \ran \gamma$. Hence, for $\varphi \in D(q)$, there exists $f \in D(Q^{(N)})$ with  $\varphi = \gamma f$, showing $\varphi \in D({\rm Tr}\, Q)$. Moreover, for $\varphi \in {\rm Tr}\, Q$, we infer 
 \begin{align*}
& {\rm Tr}\, Q(\varphi) = Q_1(H \varphi) = Q^{(D)}_1((H\varphi)_0)  + q(\gamma (H\varphi)) = q(\varphi),
 \end{align*}
 proving ${\rm Tr}\, Q = q$.
\end{proof}

  We denote by by $\mathfrak{D}$ the collection of all Dirichlet forms $Q$ on $\ell^2(X,m)$ whose domain satisfies $C_c(X) \subset D(Q)$. With the help of the previous theorem, we can characterize all Dirichlet forms in $\mathfrak{D}$ without potential term through graphs without potential and Dirichlet forms on their boundary.   In the following corollary various graphs play a role. We use subscripts to indicate with respect to which graph the objects are defined.

\begin{corol}[Dirichlet forms on discrete spaces without potential] \label{coro:discrete dirichlet forms without killing}{Let $ (X,m) $ be a discrete measure space.}
 For $Q \in \mathfrak{D}$, the following assertions are equivalent.
 \begin{enumerate}[(i)]
  \item There exists a graph $(b_Q,0)$ such that $Q$ is an extension of $Q^{(D)}_{b_Q,0}$.
  \item  There exists a  graph $(b_Q,0)$  such that $D(Q) \subset D(Q^{(N)}_{b_Q,0})$ and  there exists $q \in {\rm Tr}\, \mathfrak{D} (b_Q,0)$ such that
     $$Q(f) = Q_{b_Q,0}^{(N)}(f) + q(\gamma f) - q^{DN}_{b_Q,0}(\gamma f), \quad f \in D(Q).$$
 \end{enumerate}
  More precisely, (i) or (ii) determine the same unique graph $(b_Q,0)$ and the form $q$ in (ii) can be chosen as $q = {\rm Tr}\, Q$  (where the trace is taken with respect to the $1$-harmonic boundary of $(b_Q,0)$).
  
  If we denote by $\mathfrak{D}_0$ the Dirichlet forms characterized by (i) and (ii), then the map
 \begin{gather*}
 \mathfrak{D}_0 \to \bigsqcup_{(b,0) \text{ graph}}  {\rm Tr}\, \mathfrak{D}(b,0)\\ Q \mapsto ((b_Q,0), {\rm Tr}\, Q)
 \end{gather*}
 is a bijection.
\end{corol}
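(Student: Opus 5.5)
The equivalence reduces to Theorem~\ref{theorem:dirichlet forms associated to a graph} once the graph $(b_Q,0)$ has been identified. The plan is to use the one-to-one correspondence between regular Dirichlet forms and graphs from Theorem~\ref{IG:t:graphs_regular_df}. For $Q\in\mathfrak{D}$, let $Q^{c}$ be the closure of the restriction of $Q$ to $C_c(X)\times C_c(X)$. Since $Q$ is a Dirichlet form and $C_c(X)$ is invariant under normal contractions, $Q^c$ is a regular Dirichlet form. Hence there is a unique graph $(b,c)$ with $Q^c=Q^{(D)}_{b,c}$, and $Q$ extends $Q^{(D)}_{b,c}$.

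Condition (i) holds if and only if $c=0$ for this graph, and the graph is then unique. Indeed, if $Q$ extends $Q^{(D)}_{b',0}$, then $Q$ and $Q^{(D)}_{b',0}$ agree on $C_c(X)$. Both closures of this restriction then coincide, so $Q^{(D)}_{b',0}=Q^c$, and injectivity in Theorem~\ref{IG:t:graphs_regular_df} gives $(b',0)=(b,c)$. This yields uniqueness of $b_Q$.

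\emph{(i) $\Rightarrow$ (ii).} This is the main step: we must show $Q^{(N)}_{b_Q,0}\le Q$, i.e.\ $D(Q)\subset D(Q^{(N)}_{b_Q,0})$ and $Q(f)\ge \mathcal Q_{b_Q,0}(f)$. The inequality $Q\le Q^{(D)}$ is clear because $Q$ extends $Q^{(D)}$.

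For the lower bound I would use the standard Beurling--Deny type argument for discrete spaces. For $f\in D(Q)\cap\ell^\infty$ and finite $K\subset X$, consider cut-off products $f\varphi$ with $\varphi\in C_c(X)$, $0\le\varphi\le1$, $\varphi=1$ on $K$. Markovianity of $Q$ gives that $D(Q)\cap\ell^\infty$ is an algebra, and one can localize $Q$ to obtain
\[
Q(f)\ \ge\ \tfrac12\sum_{x,y\in K}b_Q(x,y)(f(x)-f(y))^2 .
\]
Alternatively, since $C_c(X)\subset D(Q)$, the jump part can be read off from $b_Q(x,y)=-Q(1_x,1_y)$, and contraction compatibility makes $Q-\mathcal Q$ nonnegative on the relevant approximants. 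Letting $K\uparrow X$ and truncating $f$ for unbounded $f$ (via lower semicontinuity) gives $Q\ge Q^{(N)}_{b_Q,0}$. This monotonicity is where I expect the main difficulty. If it is available in the cited literature, for example \cite{KLSSW23} or \cite{KLW21}, via the fact that $Q^{(N)}_{b,0}$ is the active main part of $Q^{(D)}_{b,0}$ and hence the maximal Silverstein-type extension, I would simply invoke it.

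Once $Q\in\mathfrak{D}(b_Q,0)$ is established, Theorem~\ref{theorem:dirichlet forms associated to a graph} gives (ii) with $q={\rm Tr}\,Q$.

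\emph{(ii) $\Rightarrow$ (i).} Given (ii), the direction (ii) $\Rightarrow$ (i) of Theorem~\ref{theorem:dirichlet forms associated to a graph} shows $Q\in\mathfrak{D}(b_Q,0)$. In particular $Q\le Q^{(D)}_{b_Q,0}$ and $Q\ge Q^{(N)}_{b_Q,0}$, and both forms agree with $\mathcal Q$ on $D(Q^{(D)})$. Therefore $Q$ extends $Q^{(D)}_{b_Q,0}$, which is (i); uniqueness of the graph was shown above. That theorem presupposes the closedness and Markov property of $Q$, and these hold since $Q\in\mathfrak{D}$.

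\emph{Bijectivity.} Injectivity holds because $Q$ determines $(b_Q,0)$, and within $\mathfrak{D}(b_Q,0)$ the map $Q\mapsto{\rm Tr}\,Q$ is injective by Theorem~\ref{theorem:dirichlet forms associated to a graph}. For surjectivity, take a pair $((b,0),q)$ with $q\in{\rm Tr}\,\mathfrak{D}(b,0)$. The same theorem produces $Q\in\mathfrak{D}(b,0)$ with ${\rm Tr}\,Q=q$. This $Q$ contains $C_c(X)$ in its domain and extends $Q^{(D)}_{b,0}$, so $Q\in\mathfrak{D}_0$ with $b_Q=b$ by the uniqueness shown above.
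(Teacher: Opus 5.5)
Your outline has the same architecture as the paper's proof. You identify $(b_Q,0)$; your appeal to injectivity in Theorem~\ref{IG:t:graphs_regular_df} is equivalent to the paper's formula $b_Q(x,y)=-Q(1_{\{x\}},1_{\{y\}})$. You then show $Q\ge Q^{(N)}_{b_Q,0}$ by an estimate on finite sets $K$ for bounded $f$, let $K\nearrow X$, remove boundedness by truncation, and apply Theorem~\ref{theorem:dirichlet forms associated to a graph}. Your bijectivity argument also matches the paper's.

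\textbf{The gap.} The only step with real content is the bound $Q(f)\ge\frac12\sum_{x,y\in K}b_Q(x,y)(f(x)-f(y))^2$ for $f\in D(Q)\cap\ell^\infty(X)$. You assert it but do not justify it, and the routes you sketch do not give it.
\begin{itemize}
\item That $D(Q)\cap\ell^\infty(X)$ is an algebra yields no lower bound on $Q(f)$. Nothing you state relates $Q(f)$ to the values of $Q$ on $C_c(X)$, which is the only place where $Q$ is known to coincide with $\mathcal Q_{b_Q,0}$.
\item The Silverstein route does not apply as stated. Maximality of $Q^{(N)}_{b,0}$ concerns Silverstein extensions, and you have not shown that an arbitrary Dirichlet extension $Q$ of $Q^{(D)}_{b_Q,0}$ is one. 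This is not known a priori.
\end{itemize}

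\textbf{The missing ingredient.} What is needed is the localization inequality the paper imports from \cite[Lemma~6.7]{Sch20}. For every Dirichlet form $Q$ on $\ell^2(X,m)$, every finite $K\subset X$ with $1_K\in D(Q)$ and every $f\in D(Q)\cap\ell^\infty(X)$,
\[
Q(f)\ \ge\ Q(1_Kf)-Q(1_Kf^2,1_K).
\]
One way to see it is via the approximating forms $Q^{(t)}(g)=\frac1t\langle g-e^{-tL}g,g\rangle$. These are of jump-plus-killing type with a symmetric kernel $p_t\ge0$ with respect to $m$. A direct computation gives
\[
Q^{(t)}(1_Kf)-Q^{(t)}(1_Kf^2,1_K)=\frac1{2t}\sum_{x,y\in K}p_t(x,y)m(x)m(y)(f(x)-f(y))^2\le Q^{(t)}(f)\le Q(f),
\]
since the killing parts cancel. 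Then let $t\to0$.

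The functions $1_Kf$, $1_Kf^2$ and $1_K$ all lie in $C_c(X)$, where $Q=Q^{(D)}_{b_Q,0}$. So the right-hand side equals $\frac12\sum_{x,y\in K}b_Q(x,y)(f(x)-f(y))^2$. A killing term would cancel in exactly the same way. This is why the argument only produces $Q^{(N)}_{b_Q,0}$, and why $c=0$ is essential (compare the $\Z^3$ example). With this inequality in place, the rest of your argument goes through.

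\textbf{A smaller point.} In (ii)$\Rightarrow$(i) you need not go through Theorem~\ref{theorem:dirichlet forms associated to a graph}; doing so would require first deriving the second formula in its condition (ii). Since $\gamma f=0$ for $f\in C_c(X)$, (ii) gives $Q=\mathcal Q_{b_Q,0}$ on $C_c(X)$. Closedness of $Q$ then shows directly that $Q$ extends $Q^{(D)}_{b_Q,0}$, which is the paper's one-line argument.
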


\begin{proof}
(ii) $\Rightarrow$ (i): This is trivial {since $ C_{c}(X)\subset  {D}(Q) $ by definition of $ \mathfrak{D} $}.

(i) $\Rightarrow$ (ii) and the ``more precisely''-statement: If $Q$ is an extension of $Q^{(D)}_{b_Q,0}$, then \cite[Lemma~6.7]{Sch20} shows that for finite $K \subset X$ and $f \in D(Q) \cap \ell^\infty(X)$ we have 
\begin{multline*}
Q(f) \geq Q(1_K f) - Q(1_K f^2,1_K) = Q^{(D)}_{b_Q,0}(1_K f) - Q^{(D)}_{b_Q,0}(1_K f^2, 1_K)\\= \frac{1}{2}\sum_{x,y \in K} b(x,y)(f(x) - f(y))^2. 
\end{multline*}
The last equality is a simple computation, see e.g. \cite[Proposition~3.18]{KLW21}. Letting $K \nearrow X$ implies $Q(f) \geq Q^{(N)}_{b_Q,0}(f)$, and since bounded functions are dense in the domain of Dirichlet forms, we arrive at $Q \geq Q^{(N)}_{b_Q,0}$.

With this at hand, (ii) and that $q$ can be chosen as the trace of $Q$ follow from the previous theorem. The formula for the graph and its uniqueness are a consequence of the formula
$$b_Q(x,y) = -Q^{(D)}_{(b_Q,0)}(1_{\{x\}},1_{\{y\}}) = -Q(1_{\{x\}},1_{\{y\}}),$$ 
for $x,y \in X$ with $x \neq y$.

The uniqueness of $(b_Q,0)$ and the previous theorem imply the bijectivity of the map in the last statement. 
\end{proof}

\begin{rem}   For any Dirichlet form $Q \in \mathfrak{D}$, there exists a unique graph $(b,c)$ such that $Q$ is an extension of $Q^{(D)}_{b,c}$, see Theorem~\ref{IG:t:graphs_regular_df}.  Hence, the previous corollary characterizes precisely those Dirichlet forms in $\mathfrak{D}$ with $c = 0$ -- the discrete Dirichlet forms without killing.   Unfortunately, without assuming $c = 0$, the result does not hold anymore, see the next example. One can amend this by making assumptions on the action of the generator, see the next corollary.  
\end{rem}

\begin{exa}
We consider the  graph $\Z^3$ {with standard weights} (i.e., $X = \Z^3$ and $b(x,y) = 1$ if and only if $|x-y|_1 = 1$) and choose a finite measure $m \colon \Z^3 \to (0,\infty)$. Then   $1 \not \in  D(Q^{(D)}_{b,0})$, $D(Q^{(D)}_{b,0}) \subset C_0(\Z^3)$,  {where $ C_{0}(\Z^{3}) $ is the uniform closure of ${C_{c}(\Z^3)}$} and
$$D(Q^{(N)}_{b,0}) = D(Q^{(D)}_{b,0}) \oplus \R \cdot 1,$$
where the sum is orthogonal with respect to the semi-scalar product $Q^{(N)}_{b,0}$,  see e.g. the example in \cite[Section~6]{KLSW17}. In particular, for each $f \in D(Q^{(N)}_{b,0})$ the limit 
$$f(\infty) = \lim_{|x| \to \infty} f(x)$$
exists, the compactification $\hat \Z^3 = \Z^3 \cup \{\infty\}$ is the one-point compactification of $\Z^3$ and the $1$-harmonic measure $\mu$ on $\{\infty\}$ is  a  multiple of the Dirac measure. We consider the Dirichlet form $Q$ on $\ell^2(X,m)$ with domain $D(Q) = D(Q^{(N)}_{b,0})$ and
$$Q(f) = Q^{(N)}_{b,0}(f) + |f(0) - f(\infty)|^2.$$
For no graph $(\tilde b,\tilde c)$ and no Markovian quadratic form $q$ on the $1$-harmonic boundary of $(\tilde b,\tilde c)$, the equality  
$$Q(f) = Q^{(N)}_{\tilde b,\tilde c} (f) + q(\gamma f)$$
can hold for all $f \in D(Q)$. Indeed, if this equality were true for some graph $(\tilde b,\tilde c)$ and some form on the boundary, then its validity for all $f \in C_c(X)$ would imply  $b = \tilde b$ and $c = \tilde c$. But we have 
$$Q(1) =  0 < 1 = c(0) = Q^{(N)}_{b,c}(1) \leq Q^{(N)}_{b,c}(1) + q(\gamma 1),$$ 
a contradiction.
\end{exa}

\begin{corol}[Dirichlet forms associated to discrete Laplacians] \label{coro:discrete dirichlet forms with discrete laplacian} {Let $ (X,m) $ be a discrete measure space.}
  For $Q \in \mathfrak{D}$, the following assertions are equivalent. 
 \begin{enumerate}[(i)]
  \item There exists a graph $(b_Q,c_Q)$ such that the self-adjoint operator of $Q$  is a restriction of $\mathcal L_{b_Q,c_Q}$.
  \item  There exists a  graph $(b_Q,c_Q)$ over $(X,m)$ such that $D(Q) \subset D(Q^{(N)}_{b_Q,c_Q})$ and  there exists $q \in {\rm Tr}\, \mathfrak{D} (b_Q,c_Q)$ such that 
     $$Q(f) = Q_{b_Q,c_Q}^{(N)}(f) + q(\gamma f) - q^{DN}_{b_Q,0}(\gamma f), \quad f \in D(Q).$$
 \end{enumerate}
  More precisely, (i) or (ii) determine the same unique graph $(b_Q,c_Q)$  and the form $q$ in (ii) can be chosen as $q = {\rm Tr}\, Q$  (where the trace is taken with respect to the $1$-harmonic boundary of $(b_Q,c_Q)$).
  
  If we denote by $\mathfrak{D}_g$ the Dirichlet forms characterized by (i) and (ii), then the map
 \begin{gather*}
 \mathfrak{D}_g \to \bigsqcup_{(b,c) \text{ graph}}  {\rm Tr}\, \mathfrak{D}(b,c)\\ Q \mapsto ((b_Q,c_Q), {\rm Tr}\, Q)
 \end{gather*}
 is a bijection.
\end{corol}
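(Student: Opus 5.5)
The plan is to reduce to Theorem~\ref{theorem:dirichlet forms associated to a graph}, exactly as in the proof of Corollary~\ref{coro:discrete dirichlet forms without killing}. The only new ingredient is to show that (i) forces $Q^{(N)}_{b_Q,c_Q} \leq Q \leq Q^{(D)}_{b_Q,c_Q}$.

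For (ii) $\Rightarrow$ (i), Theorem~\ref{theorem:dirichlet forms associated to a graph} gives $Q \in \mathfrak{D}(b_Q,c_Q)$. In the paper's text the last term reads $q^{DN}_{b_Q,0}$; I will read it as $q^{DN}_{b_Q,c_Q}$, since otherwise the statement would not match the theorem. The remark after the definition of $L^{(D)}$ then applies: the generator of any closed form that restricts $Q^{(N)}$ and extends $Q^{(D)}$ is a restriction of $\mathcal L$. That argument goes through Green's formula. For $f \in D(L)$ and $\varphi \in C_c(X) \subset D(Q^{(D)}) \subset D(Q)$, the form identity in Lemma~\ref{lemma:q harmonicity} gives $Q(f,\varphi) = Q^{(N)}(f,\varphi)$, because $\gamma\varphi = 0$. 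Green's formula then gives $\langle Lf,\varphi\rangle = \sum \mathcal L f\,\varphi\, m$, so $Lf = \mathcal L f$.

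For (i) $\Rightarrow$ (ii), first identify the graph. By Theorem~\ref{IG:t:graphs_regular_df}, the closure of $Q|_{C_c(X)}$ is $Q^{(D)}_{b,c}$ for a unique graph $(b,c)$, so $Q$ extends $Q^{(D)}_{b,c}$. If $L \subset \mathcal L_{b_Q,c_Q}$, then for $x \neq y$ and $g = 1_{\{y\}}$ we have $Q(f, g) = \langle Lf, g\rangle = \mathcal L_{b_Q,c_Q} f(y)\, m(y)$ for all $f \in D(L)$. Comparing this with the Green formula for $(b,c)$, and using that $D(L)$ is a form core, I would conclude that $(b,c) = (b_Q,c_Q)$. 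Testing on indicator-type elements, or arguing via density of $D(L)$ in $D(Q)$, should give this; the formula $b(x,y) = -Q(1_{\{x\}},1_{\{y\}})$ pins down $b$, and $c$ is then fixed by the diagonal. This also gives uniqueness of the graph.

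The main obstacle is proving $Q \geq Q^{(N)}_{b,c}$. In the $c=0$ case this came from the truncation inequality of \cite{Sch20}, but the example on $\Z^3$ shows it fails in general. Hence the hypothesis on the generator must be used. My approach is to work with $f \in D(L)$ and write $Q(f) = \langle \mathcal L f, f\rangle = \lim_K \sum_{x\in K} \mathcal L f(x) f(x) m(x)$. I would then apply the basic cut-off inequality (Proposition~\ref{basic-cut-off}) in reverse, via the exact identity in its proof: $\mathcal Q(f\varphi) = \sum \varphi^2 f \mathcal L f\, m + \frac12\sum b\, f(x)f(y)(\nabla\varphi)^2$. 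I would choose cut-offs $\varphi = \varphi_n \in C_c(X)$ with $\varphi_n \nearrow 1$ and aim to show that the cross term is asymptotically nonnegative or negligible.

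Showing this directly may fail, so my fallback is the Markov property. I would use that $Q - Q^{(D)}$ restricted to functions vanishing on $K$ is Markovian, combined with $Q(f,1_{\{x\}}) = \mathcal L f(x) m(x)$, which holds for all $f \in D(Q)$ by form-core density of $D(L)$. This yields $Q(f,g) = \sum \mathcal L f\, g\, m$ for finitely supported $g$, and also $Q(f^2 1_K, 1_K)$-type identities. I would then mimic \cite[Lemma~6.7]{Sch20}: $Q(f) \geq Q(1_K f) - Q(1_K f^2, 1_K)$. Computing the right side through the generator formula, the $c$-term now survives, giving $\frac12\sum_{x,y\in K} b(x,y)(f(x)-f(y))^2 + \sum_{x \in K} c f^2$. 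Letting $K \nearrow X$ and using density of bounded functions gives $Q \geq Q^{(N)}$. The bound $Q \leq Q^{(D)}$ holds because $Q$ extends $Q^{(D)}$ and agrees with it on its domain. Theorem~\ref{theorem:dirichlet forms associated to a graph} then gives (ii) with $q = {\rm Tr}\, Q$, and the bijection follows from uniqueness of the graph together with the bijectivity part of that theorem.
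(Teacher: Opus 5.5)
Your reduction is the right one and matches the paper's. Everything hinges on showing $Q^{(N)}_{b,c}\le Q\le Q^{(D)}_{b,c}$ under (i), after which Theorem~\ref{theorem:dirichlet forms associated to a graph} finishes the job; the paper simply cites \cite[Theorem~6.5]{Sch20} for this inequality. Your (ii)$\Rightarrow$(i) argument is fine, as is your reading of $q^{DN}_{b_Q,0}$ as $q^{DN}_{b_Q,c_Q}$.

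\textbf{The gap is your proof of $Q\ge Q^{(N)}_{b,c}$.} The right-hand side of $Q(f)\ge Q(1_Kf)-Q(1_Kf^2,1_K)$ involves only the finitely supported functions $1_Kf$, $1_Kf^2$ and $1_K$. On these, $Q=Q^{(D)}_{b,c}$ no matter what the generator is, so the generator formula cannot enter this computation. Carrying it out, the two killing contributions $\sum_{x\in K}c(x)f(x)^2$ cancel exactly, and you are left with $\frac12\sum_{x,y\in K}b(x,y)(f(x)-f(y))^2$. That only gives $Q\ge Q^{(N)}_{b,0}$, so your claim that ``the $c$-term survives'' is false. This bound cannot suffice: the $\Z^3$ example satisfies it, yet $Q(1)=0<Q^{(N)}_{b,c}(1)$ there. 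The assertion that $Q-Q^{(D)}$ is Markovian on functions vanishing on $K$ is unjustified and does not help. Your cut-off route is, as you suspected, circular, because controlling the cross term is equivalent to the inequality you want.

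\textbf{The missing step is evaluating $Q(f,g)$ with $f$ not finitely supported.} Form-core density of $D(L)$ alone does not let you pass to the limit in $\sum_y b(x,y)f_n(y)$. For bounded $f\in D(Q)$ use $\alpha G_\alpha f\in D(L)$ instead. These converge to $f$ in form norm as $\alpha\to\infty$ and satisfy $\|\alpha G_\alpha f\|_\infty\le\|f\|_\infty$, so dominated convergence gives, for $g\in C_c(X)$,
\[
Q(f,g)=\tfrac12\sum_{x,y}b(x,y)(f(x)-f(y))(g(x)-g(y))+\sum_x c(x)f(x)g(x).
\]
The same device with $f=1_{\{x\}}$ makes your identification $(b,c)=(b_Q,c_Q)$ rigorous. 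Now set $R:=Q-Q^{(N)}_{b,0}$. The truncation inequality shows $R$ is a nonnegative symmetric form on $D(Q)\cap\ell^\infty(X)$. With $g=1_Kf$ you get $R(f,1_Kf)=\sum_{x\in K}c(x)f(x)^2=R(1_Kf)$. Cauchy--Schwarz then gives $R(f)\ge\sum_{x\in K}c(x)f(x)^2$. Let $K\nearrow X$, and handle unbounded $f$ by truncations $(f\wedge n)\vee(-n)$, using that $Q$ is Markovian and $\mathcal Q_{b,c}$ is lower semicontinuous. This yields $Q\ge Q^{(N)}_{b,c}$, which is the content of the cited result of \cite{Sch20}.
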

 \begin{proof} (ii) $\Rightarrow$ (i): This is a consequence of Green's formula and $\gamma f =  0$ for all $f \in C_c(X)$. 
 
 (i) $\Rightarrow$ (ii):  \cite[Theorem~6.5]{Sch20} shows that any Dirichlet form $Q$ with $C_c(X) \subset D(Q)$ whose generator is a restriction of $\mathcal L_{b,c}$ satisfies $ Q^{(D)}_{b,c} \leq Q \leq Q^{(N)}_{b,c} $ and so the statement follows from the previous theorem.
 
 The remaining bijectivity statement can be inferred as in the previous corollary. 
 \end{proof}

 \begin{rem}
  Theorem~\ref{theorem:dirichlet forms associated to a graph} is a streamlined version of \cite[Theorem~3.5. and Corollary~3.6.]{KLSS19}, which are the main results of this paper. Corollary~\ref{coro:discrete dirichlet forms without killing} and Corollary~\ref{coro:discrete dirichlet forms with discrete laplacian} use this theorem to characterize almost all Dirichlet forms whose domain contains the ``discrete test functions'' $C_c(X)$. 
 \end{rem}

%
%
%

\textbf{Acknowledgements.} The authors appreciate the financial support of the DFG within the priority program ``Geometry at infinity''. Moreover, we express our gratitude to Bobo Hua, Xueping Huang, Simon Puchert, Michael Schwarz und Melchior Wirth who coauthored the works which are the foundation of this survey. Finally, we would also like to thank the ``anonymous'' referee Kai-Uwe Bux for his careful reading and  helpful comments.

\printbibliography
\end{document}